\documentclass[11pt,a4paper]{article}
\usepackage[margin=28mm]{geometry}
\usepackage{amsmath,amssymb,amsthm,mathtools,mathrsfs}
\usepackage{tikz-cd}
\usepackage{bm}
\usepackage{enumitem}
\usepackage[hidelinks]{hyperref}
\newtheorem{thm}{Theorem}[section]
\newtheorem{prop}[thm]{Proposition}
\newtheorem{cor}[thm]{Corollary}
\newtheorem{lem}[thm]{Lemma}
\theoremstyle{definition}
\newtheorem{dfn}[thm]{Definition}
\newtheorem{ex}[thm]{Example}
\theoremstyle{remark}
\newtheorem{rem}[thm]{Remark}
\newcommand{\Z}{\mathbb Z}
\newcommand{\F}{\mathbb F}
\newcommand{\D}{\mathcal D}
\newcommand{\B}[1]{\Omega(#1)}
\newcommand{\NS}{\operatorname{NS}(G)}
\setlist{itemsep=2pt,topsep=4pt}
\allowdisplaybreaks
\title{Normal-Core Compression for Units in Burnside Rings}
\author{Masahiro Wakatake}
\date{}

\begin{document}
\maketitle

\begin{abstract}
Let $G$ be a finite group. We study the additive map on the Burnside ring of $G$ that sends an orbit $[G/H]$ to $[G/\operatorname{core}_G(H)]$, which we call the \emph{normal-core compression}, and investigate its behavior on units. For a normal subgroup $N\trianglelefteq G$, we show that the cumulative $N$-core coefficient is equal to the sum of the orbit-basis coefficients of the $N$-fixed-point element over $G/N$. Consequently, for a unit this coefficient takes only the values $-1,0,1$, while the individual core coefficients are recovered by M\"obius inversion on the lattice of normal subgroups. Using Yoshida's criterion, we further express the cumulative core coefficient in terms of the $N$-mark and a linear character of $G/N$. This yields a necessary and sufficient condition for the existence of a unit with nonzero $N$-core coefficient in the normal partial Burnside ring: $G/N$ must be an elementary abelian $2$-group. On the other hand, the units mapped to $1_{\B G}$ by the core compression, called \emph{core-trivial units}, are characterized by the condition that all marks at core-cyclic subgroups are equal to $1$. Using primitive idempotents of the rational Burnside ring and Yoshida's criterion, we realize the group of core-trivial units as the kernel of an $\F_2$-linear defect map. Finally, we obtain a splitting along normal quotients and a direct-sum decomposition along direct products for this defect map. Combining the direct-product decomposition with an explicit nontrivial example for $S_4$, we show that the ranks of core-trivial unit groups are unbounded among finite groups.
\end{abstract}

\begin{center}
\small
\textbf{2020 Mathematics Subject Classification.}
19A22, 20C15.\\
\textbf{Keywords.}
Burnside ring, partial Burnside ring, unit group, normal core, M\"obius inversion.
\end{center}

\section{Introduction and main results}

Let $G$ be a finite group. The unit group of the Burnside ring $\B G$ is a finite elementary abelian $2$-group, and every mark of a unit is equal to $\pm1$. Classical work on unit groups of Burnside rings includes Matsuda~\cite{Matsuda1982}. Yoshida characterized those $\{\pm1\}$-valued mark vectors that arise from Burnside units by a homomorphism condition on the Weyl group of each subgroup~\cite{Yoshida1990}. We refer to this homomorphism condition as \emph{Yoshida's criterion}. For the partial Burnside ring relative to the family of normal subgroups, explicit generators corresponding to normal subgroups of index $2$ are also known~\cite[Proposition~4.6]{Wakatake2018}.

Throughout the paper, we write $\operatorname{NS}(G)$ for the set of normal subgroups of $G$ and focus on the normal core
\[
 \operatorname{core}_G(H)
 =\bigcap_{g\in G}gHg^{-1}
\]
of a subgroup $H\leq G$. On the orbit basis, define
\[
 \mathfrak n_G([G/H])
 :=[G/\operatorname{core}_G(H)],
\]
and extend this assignment additively to obtain
\[
 \mathfrak n_G:\B G\longrightarrow\B{G,\operatorname{NS}(G)}.
\]
We call this additive map the \emph{normal-core compression}, or simply the \emph{core compression}. It aggregates stabilizer information according to normal cores. The two main questions of this paper are: what information about Burnside units is retained by the core compression, and how can the units that become trivial under core compression be described?

For the first question, let $c_N(x)$ denote the $N$-core coefficient of $x\in\B G$, and let $s_N(x)$ denote its cumulative sum. If $N\trianglelefteq G$ and $\operatorname{coeffsum}$ denotes the sum of orbit-basis coefficients, then we prove
\[
 s_N(x)=\operatorname{coeffsum}_{G/N}(x^N)
\]
(Theorem~\ref{thm:core-cumulative}). Hence, for $u\in\B G^\times$,
\[
 s_N(u)\in\{-1,0,1\},
\]
and the individual core coefficients are recovered by M\"obius inversion on the lattice of normal subgroups:
\[
 c_N(u)=\sum_{N\leq M\trianglelefteq G}\mu_{\operatorname{NS}(G)}(N,M)s_M(u)
\]
(Proposition~\ref{prop:mobius}). Thus, even when individual core coefficients can be large, their information is encoded by a family of three-valued cumulative coefficients.

Yoshida's criterion sharpens this three-valued description. For $N\trianglelefteq G$ and $u\in\B G^\times$, define
\[
 \chi_N^u(gN)
 :=
 \frac{\varphi_{\langle N,g\rangle}^G(u)}{\varphi_N^G(u)}.
\]
Then $\chi_N^u:G/N\to\{\pm1\}$ is a group homomorphism, and
\[
 s_N(u)=
 \begin{cases}
  \varphi_N^G(u),&\chi_N^u=1_{G/N},\\
  0,&\chi_N^u\neq1_{G/N}
 \end{cases}
\]
(Theorem~\ref{thm:yoshida-dichotomy}). Hence the cumulative core coefficient is determined by the $N$-mark together with a linear character of $G/N$. Combined with M\"obius inversion, this also yields information on individual core coefficients.

As an application, we study the normal partial Burnside ring. For a normal subgroup $N\trianglelefteq G$, there exists a unit $u\in\B{G,\NS}^{\times}$ with
\[
 c_N(u)\neq0
\]
if and only if
\[
 G/N\text{ is an elementary abelian }2\text{-group}
\]
(Theorem~\ref{thm:normal-pbr-nonzero-core}). Moreover, inflation and fixed points reduce the core-coefficient problem to normal quotients, and the core compression is also compatible with direct products (Propositions~\ref{prop:inflation-core} and~\ref{prop:external-product-core}).

For the second question, namely the units that become trivial under core compression, define
\[
 \mathcal U_{\mathrm{core}}(G)
 :=\{u\in\B G^\times\mid\mathfrak n_G(u)=1_{\B G}\}.
\]
We call its elements \emph{core-trivial units}. A subgroup $H\leq G$ is called \emph{core-cyclic} if $H/\operatorname{core}_G(H)$ is cyclic. We prove
\[
 \mathcal U_{\mathrm{core}}(G)
 =
 \left\{
 u\in\B G^\times\ \middle|\
 \begin{array}{c}
 \varphi_H^G(u)=1\text{ for every}\\
 \text{core-cyclic subgroup }H\leq G
 \end{array}
 \right\}
\]
(Theorem~\ref{thm:core-trivial-subgroup}).

To describe this group by finite linear algebra, let $e_H^G$ denote the primitive idempotent of $\mathbb Q\B G$ corresponding to $(H)$. Write $\operatorname{Sub}_{\mathrm{ncc}}(G)$ for the set of non-core-cyclic subgroups. For a set $S$ of conjugacy classes of such subgroups, put
\[
 e_S:=\sum_{(H)\in S}e_H^G
\]
and
\[
 \mathcal S_2(G)
 :=\left\{S\subseteq\operatorname{Sub}_{\mathrm{ncc}}(G)^c\ \middle|\ 2e_S\in\B G\right\}.
\]
Then
\[
 \mathcal U_{\mathrm{core}}(G)
 =\{1_{\B G}-2e_S\mid S\in\mathcal S_2(G)\},
\]
and $\mathcal S_2(G)$ is an $\F_2$-vector space under symmetric difference (Theorem~\ref{thm:core-trivial-idempotents}).

Next define
\[
 V_G:=\F_2^{\operatorname{Sub}_{\mathrm{ncc}}(G)^c}.
\]
Using the homomorphism condition in Yoshida's criterion, we construct a defect map
\[
 \operatorname{Def}_G:V_G\longrightarrow\mathfrak D_G,
\]
where $\mathfrak D_G$ is a direct sum of finite-dimensional local defect spaces attached to the Weyl groups $W_G(K)=N_G(K)/K$. Our main linear criterion is
\[
 \ker\operatorname{Def}_G=\mathcal S_2(G)
\]
(Theorem~\ref{thm:yoshida-defect-kernel}). Consequently, determining the core-trivial units reduces to computing the kernel of a finite-dimensional $\F_2$-linear map arising from Yoshida's criterion.

Finally, we establish decompositions of the defect map with respect to normal quotients and direct products. If $N\trianglelefteq G$, linear maps corresponding to inflation and fixed points give
\[
 \mathcal S_2(G)
 =\operatorname{Inf}\bigl(\mathcal S_2(G/N)\bigr)
 \oplus
 \bigl(\mathcal S_2(G)\cap\ker\operatorname{Fix}\bigr)
\]
(Theorem~\ref{thm:yoshida-defect-normal-quotient-decomposition}). For direct products, we likewise obtain a direct-sum decomposition into the components coming from the individual factors and a residual component annihilated by all fixed-point maps (Theorem~\ref{thm:yoshida-defect-direct-product-decomposition}). These decompositions transfer directly to $\mathcal U_{\mathrm{core}}(G)$. In addition, $S_4$ has a nontrivial core-trivial unit (Example~\ref{ex:s4-core-trivial-unit}), and hence direct powers $S_4^m$ satisfy
\[
 \operatorname{rank}\bigl(\mathcal U_{\mathrm{core}}(S_4^m)\bigr)\geq m.
\]
Thus the ranks of core-trivial unit groups are unbounded among finite groups.

Section~2 fixes the basic notation and recalls Burnside rings, collections, partial Burnside rings, and marks. Section~3 introduces core coefficients and cumulative core coefficients and studies their fixed-point description, M\"obius inversion, compatibility with normal quotients, and examples. Section~4 refines the cumulative core coefficients by means of Yoshida characters. Section~5 gives applications to normal partial Burnside rings and a basic characterization of core-trivial units. Section~6 uses primitive idempotents of the rational Burnside ring and Yoshida's criterion to linearize the core-trivial units, realize them as the kernel of a defect map, and establish decompositions along normal quotients and direct products.

\section{Preliminaries on Burnside rings and partial Burnside rings}

In this section we recall the basic facts about Burnside rings, collections, partial Burnside rings, and mark homomorphisms, and fix the notation used throughout the paper.

\subsection{Basic notation}

Throughout, $G$ denotes a finite group. We write $H\trianglelefteq G$ if a subgroup $H\leq G$ is normal in $G$, and $(H)$ for the $G$-conjugacy class of $H$. We also write $\operatorname{Sub}(G)$ for the set of all subgroups of $G$.

For a finite set $X$, we use $\#X$ for its cardinality. For a finite group $G$ and a subgroup $H\leq G$, we write $|G|$ for the order of $G$ and $|G:H|$ for the index of $H$ in $G$.

If $\D$ is a family of subgroups closed under $G$-conjugacy, write
\[
\D^c:=\{(H)\mid H\in\D\}
\]
for the set of $G$-conjugacy classes represented in $\D$.

For a ring $R$, denote its multiplicative identity by $1_R$ and its unit group by $R^\times$. We write $C_2$ for the cyclic group of order $2$. For a group $Q$, let
\[
1_Q:Q\longrightarrow C_2,
\qquad
q\longmapsto 1
\]
denote the trivial homomorphism. A finite elementary abelian $2$-group $A$ is naturally an $\F_2$-vector space. We define
\[
\operatorname{rank}(A):=\dim_{\F_2}A.
\]
Thus
\[
A\cong C_2^{\,\operatorname{rank}(A)}.
\]

\subsection{Burnside rings}

For a finite group $G$, the isomorphism classes of finite $G$-sets form a commutative semiring under disjoint union and Cartesian product with the diagonal $G$-action. Its Grothendieck ring is the \emph{Burnside ring} of $G$, denoted by $\B G$. The element represented by a finite $G$-set $X$ is written $[X]$, and
\[
1_{\B G}=[G/G].
\]

Every finite $G$-set decomposes uniquely as a disjoint union of orbits, and every transitive finite $G$-set is isomorphic to $G/H$ for some subgroup $H\leq G$. Moreover, $G/H$ and $G/K$ are isomorphic as $G$-sets if and only if $H$ and $K$ are conjugate in $G$. Hence
\[
\B G
=
\bigoplus_{(H)\in\operatorname{Sub}(G)^c}\Z[G/H].
\]
The product of two orbit-basis elements is given by the double-coset formula
\[
[G/H][G/K]
=
\sum_{HgK\in H\backslash G/K}[G/(H\cap{}^gK)],
\qquad
{}^gK=gKg^{-1}.
\]

\subsection{Collections and partial Burnside rings}

\begin{dfn}[Collection]\label{dfn:collection}
A family of subgroups $\D\subseteq\operatorname{Sub}(G)$ is called a \emph{collection} of $G$ if it satisfies the following three conditions:
\begin{enumerate}[label=\textup{(\roman*)}]
 \item $G\in\D$;
 \item if $H\in\D$ and $g\in G$, then ${}^gH\in\D$;
 \item if $H,K\in\D$, then $H\cap K\in\D$.
\end{enumerate}
\end{dfn}

The additive subgroup spanned by the orbit-basis elements with stabilizers in $\D$,
\[
\B{G,\D}
=
\bigoplus_{(H)\in\D^c}\Z[G/H],
\]
is called the \emph{partial Burnside ring relative to $\D$}. By the double-coset formula and closure of $\D$ under conjugation and intersection, this is a subring of $\B G$ with the same identity element,
\[
1_{\B G}=[G/G].
\]
In particular,
\[
\B{G,\operatorname{Sub}(G)}=\B G.
\]

\subsection{Mark homomorphisms}

For a subgroup $K\leq G$ and a finite $G$-set $X$, let
\[
X^K:=\{x\in X\mid kx=x\text{ for all }k\in K\}
\]
denote the set of $K$-fixed points.

For a collection $\D$ and an arbitrary subgroup $K\leq G$, fixed-point cardinality defines a ring homomorphism
\[
\varphi_K^{\D}:\B{G,\D}\longrightarrow\Z,
\qquad
[X]\longmapsto\#X^K.
\]
Here we do not assume that $K\in\D$.

Taking the marks corresponding to the $G$-conjugacy classes represented in $\D$ gives a ring homomorphism
\[
\varphi^{\D}:\B{G,\D}\longrightarrow
\prod_{(K)\in\D^c}\Z,
\qquad
x\longmapsto
\bigl(\varphi_K^{\D}(x)\bigr)_{(K)\in\D^c}.
\]
We call this the \emph{mark homomorphism relative to $\D$}. It is injective~\cite[\S 2.2]{Wakatake2018}. Therefore we may identify
\[
\B{G,\D}
\cong
\operatorname{Im}\bigl(\varphi^{\D}\bigr)
\subseteq
\prod_{(K)\in\D^c}\Z.
\]

Moreover, $x\in\B{G,\D}$ is a unit if and only if
\[
\varphi_K^{\D}(x)\in\{-1,1\}
\qquad
(K\in\D).
\]
In that case, $x^2=1_{\B G}$.

When $\D=\operatorname{Sub}(G)$, we abbreviate
\[
\varphi_K^G:=\varphi_K^{\operatorname{Sub}(G)},
\qquad
\varphi^G:=\varphi^{\operatorname{Sub}(G)}.
\]

\subsection{The family of normal subgroups}

The family
\[
\NS
=
\{N\leq G\mid N\trianglelefteq G\}
\]
is a collection. We call the corresponding partial Burnside ring $\B{G,\NS}$ the \emph{normal partial Burnside ring}.

For normal subgroups $H,K\trianglelefteq G$,
\[
[G/H][G/K]
=
|G:HK|[G/(H\cap K)].
\]
For a normal subgroup $H\trianglelefteq G$ and an arbitrary subgroup $L\leq G$,
\[
\varphi_L^{\NS}([G/H])
=
\begin{cases}
|G:H|,&L\leq H,\\
0,&L\nleq H.
\end{cases}
\]

The following generating theorem for the unit group of the normal partial Burnside ring is known~\cite[Proposition~4.6]{Wakatake2018}:
\[
\B{G,\NS}^{\times}
=
\left\langle
-1_{\B G},\;
1_{\B G}-[G/N]
\ \middle|\
N\trianglelefteq G,\ |G:N|=2
\right\rangle.
\]
For each normal subgroup $N$ of index $2$, put
\[
u_N:=1_{\B G}-[G/N].
\]
Then $u_N^2=1_{\B G}$, and $-1_{\B G}$ together with all the $u_N$ are independent. Hence
\[
\B{G,\NS}^{\times}
\cong
C_2^{\,1+\#\{N\trianglelefteq G\mid |G:N|=2\}},
\]
and, in particular,
\[
\operatorname{rank}\bigl(\B{G,\NS}^{\times}\bigr)
=
1+\#\{N\trianglelefteq G\mid |G:N|=2\}.
\]
We shall use this generating theorem below.

\section{Normal-core coefficients and their cumulative sums}
\subsection{The Burnside average and cumulative core coefficients}

\begin{dfn}[Coefficient sum]
If
\[
x=\sum_{(H)\in\D^c}a_H[G/H]\in\B{G,\D},
\]
define
\[
\operatorname{coeffsum}_G(x)
:=
\sum_{(H)\in\D^c}a_H.
\]
\end{dfn}

Thus $\operatorname{coeffsum}_G(x)$ is the sum of the coefficients with respect to the orbit basis. It differs from the augmentation $[G/H]\mapsto |G:H|$, which gives the cardinality of a finite $G$-set, and in general it is not a ring homomorphism.

We first express the coefficient sum as a Burnside average of marks at cyclic subgroups and then interpret the cumulative coefficients arising from normal-core compression over normal quotients.

Since a collection $\D$ is closed under conjugation and finite intersections, if $H\in\D$, then
\[
\operatorname{core}_G(H)\in\D\cap\NS.
\]

\begin{dfn}[Core coefficients and cumulative core coefficients]
Let
\[
x=\sum_{(H)\in\D^c}a_H[G/H]\in\B{G,\D}.
\]
For each $N\in\D\cap\NS$, define
\[
c_N(x)
:=
\sum_{\substack{(H)\in\D^c\\ \operatorname{core}_G(H)=N}}a_H,
\qquad
s_N(x)
:=
\sum_{\substack{M\in\D\cap\NS\\ N\leq M}}c_M(x).
\]
We call these the \emph{$N$-core coefficient} and the \emph{cumulative $N$-core coefficient} of $x$, respectively.

We call the additive homomorphism
\[
\mathfrak{n}_{G,\D}:
\B{G,\D}
\longrightarrow
\B{G,\D\cap\NS},
\qquad
x\longmapsto
\sum_{N\in\D\cap\NS}c_N(x)[G/N]
\]
the \emph{normal-core compression} with respect to $G$ and $\D$. We abbreviate
\[
\mathfrak n_G
:=
\mathfrak n_{G,\operatorname{Sub}(G)}.
\]
\end{dfn}

If $N\trianglelefteq G$, then for every subgroup $H\leq G$,
\begin{equation}\label{eq:normal-core-containment}
N\leq \operatorname{core}_G(H)
\quad\Longleftrightarrow\quad
N\leq H.
\end{equation}
Therefore
\begin{equation}\label{eq:sN-basis}
s_N(x)
=
\sum_{\substack{(H)\in\D^c\\ N\leq H}}a_H.
\end{equation}

For $N\in\D\cap\NS$, put
\[
\D_{\geq N}
:=
\{H\in\D\mid N\leq H\}
\]
and
\[
\D_{\geq N}/N
:=
\{H/N\mid H\in\D_{\geq N}\}.
\]
Then $\D_{\geq N}/N$ is a collection of the quotient group $G/N$.

Since $N\trianglelefteq G$, the fixed-point set $X^N$ is $G$-stable, and $N$ acts trivially on $X^N$. Hence $X^N$ is naturally a $G/N$-set. Taking $N$-fixed points therefore defines a unital ring homomorphism
\[
(-)^N:
\B{G,\D}
\longrightarrow
\B{G/N,\D_{\geq N}/N},
\qquad
[X]\longmapsto[X^N].
\]
On the orbit basis,
\begin{equation}\label{eq:normal-fixed-orbit}
[G/H]^N
=
\begin{cases}
[(G/N)/(H/N)],&N\leq H,\\
0,&N\nleq H.
\end{cases}
\end{equation}

\begin{prop}[Burnside average]
For every $x\in\B{G,\D}$,
\[
\operatorname{coeffsum}_G(x)
=
\frac{1}{|G|}
\sum_{g\in G}
\varphi_{\langle g\rangle}^{\D}(x).
\]
In particular, if $u\in\B{G,\D}^{\times}$, then
\[
\operatorname{coeffsum}_G(u)\in\{-1,0,1\}.
\]
\end{prop}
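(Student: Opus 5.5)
By linearity in $x$, both sides are additive, so it suffices to verify the identity on orbit-basis elements $x=[G/H]$ with $H\in\D$. The left side is then $1$. For the right side I will use that the mark $\varphi_{\langle g\rangle}^{\D}([G/H])$ is the number of fixed points of $g$ on $G/H$, which does not depend on $\D$. The average
\[
\frac{1}{|G|}\sum_{g\in G}\#(G/H)^{\langle g\rangle}
\]
is therefore the number of orbits of $G$ on the transitive set $G/H$, by the Cauchy--Frobenius (Burnside) lemma. That number is $1$. Alternatively, I can count pairs $(g,aH)$ with $gaH=aH$ directly: for each coset $aH$ there are $|{}^aH|=|H|$ such $g$, giving $|G:H|\cdot|H|=|G|$ pairs. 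This settles the first formula.

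For the unit statement, I take $u\in\B{G,\D}^\times$. Every mark $\varphi_K^{\D}(u)$ is $\pm1$, but cyclic subgroups $\langle g\rangle$ need not lie in $\D$. So I will not invoke the unit criterion at $\langle g\rangle$ directly. Instead, the map $\varphi_{\langle g\rangle}^{\D}$ is a ring homomorphism to $\Z$ (as recalled in the preliminaries, with $K$ arbitrary), and $u^2=1_{\B G}$. Hence $\varphi_{\langle g\rangle}^{\D}(u)^2=1$, so this mark is also $\pm1$.

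Next I write $\chi(g):=\varphi_{\langle g\rangle}^{\D}(u)$ and claim that $\chi$ is a group homomorphism $G\to\{\pm1\}$. If so, orthogonality gives that $\frac1{|G|}\sum_g\chi(g)$ equals $1$ if $\chi$ is trivial and $0$ otherwise. Combined with the sign $\pm$, this yields values in $\{-1,0,1\}$.

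The main obstacle is establishing this homomorphism property. That is essentially Yoshida's criterion at $K=1$, and it is not yet stated in the paper. So I would avoid it with a cruder argument that needs no homomorphism property. The average $A=\frac1{|G|}\sum_g\chi(g)$ is an integer, being a coefficient sum, and each summand is $\pm1$, so $|A|\le 1$. An integer of absolute value at most $1$ lies in $\{-1,0,1\}$. This finishes the proof without Yoshida.
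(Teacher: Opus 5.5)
Your proof is correct and follows the paper's argument: Burnside's lemma on the transitive set $G/H$ plus $\Z$-linearity gives the identity, and then an integer that is the average of $|G|$ values $\pm1$ must lie in $\{-1,0,1\}$. You also justify explicitly that $\varphi_{\langle g\rangle}^{\D}(u)=\pm1$ even when $\langle g\rangle\notin\D$, via the ring homomorphism and $u^2=1_{\B G}$, which the paper only asserts; the Yoshida detour you abandoned is unnecessary, and as stated it is off by a sign, since it is $g\mapsto\varphi_{\langle g\rangle}^{\D}(u)/\varphi_1^{\D}(u)$, not $\chi$ itself, that is a homomorphism.
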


\begin{proof}
For a transitive $G$-set $G/H$, Burnside's lemma identifies the right-hand side with the number of $G$-orbits of $G/H$, namely $1$. The first identity follows by $\Z$-linearity.

If $u$ is a unit, then
\[
\varphi_{\langle g\rangle}^{\D}(u)\in\{-1,1\}
\]
for every $g\in G$. Hence $\operatorname{coeffsum}_G(u)$ is an integer in the interval $[-1,1]$.
\end{proof}

\begin{thm}[Fixed-point description of cumulative core coefficients]
\label{thm:core-cumulative}
For every $x\in\B{G,\D}$ and $N\in\D\cap\NS$,
\[
s_N(x)
=
\operatorname{coeffsum}_{G/N}(x^N).
\]
In particular, if $u\in\B{G,\D}^{\times}$, then
\[
s_N(u)
=
\frac{1}{|G/N|}
\sum_{gN\in G/N}
\varphi_{\langle gN\rangle}^{\D_{\geq N}/N}(u^N)
\in\{-1,0,1\}.
\]
\end{thm}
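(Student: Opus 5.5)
The plan is to reduce everything to the orbit-basis description \eqref{eq:sN-basis} together with the orbit formula \eqref{eq:normal-fixed-orbit}, and then invoke the Burnside average proposition for the quotient group $G/N$. Write $x=\sum_{(H)\in\D^c}a_H[G/H]$. By $\Z$-linearity of $(-)^N$ and \eqref{eq:normal-fixed-orbit},
\[
x^N=\sum_{\substack{(H)\in\D^c\\ N\leq H}}a_H\,[(G/N)/(H/N)].
\]
The point to check is that this expression is already written in the orbit basis of $\B{G/N,\D_{\geq N}/N}$ with no collisions. Two subgroups $H,K\geq N$ are $G$-conjugate if and only if $H/N$ and $K/N$ are $G/N$-conjugate, since ${}^{gN}(H/N)={}^gH/N$. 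Hence $(H)\mapsto(H/N)$ is a bijection from the classes in $\D^c$ containing $N$ onto $(\D_{\geq N}/N)^c$. Therefore $\operatorname{coeffsum}_{G/N}(x^N)=\sum_{(H)\in\D^c,\,N\leq H}a_H$, which equals $s_N(x)$ by \eqref{eq:sN-basis}. This proves the first identity.

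For the unit statement, we use that $(-)^N$ is a unital ring homomorphism. It therefore sends $u\in\B{G,\D}^\times$ to a unit $u^N\in\B{G/N,\D_{\geq N}/N}^\times$. Since $\D_{\geq N}/N$ is a collection of $G/N$, as noted before the statement, we may apply the Burnside average proposition to the group $G/N$, the collection $\D_{\geq N}/N$, and the element $u^N$. This gives both the averaging formula over $gN\in G/N$ and the conclusion that the value lies in $\{-1,0,1\}$. Alternatively, the mark identity $\varphi^{\D_{\geq N}/N}_{\langle gN\rangle}(u^N)=\varphi^{\D}_{\langle N,g\rangle}(u)=\pm1$ shows directly that the marks are $\pm1$.

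The only step that needs care is the no-collision claim in the first paragraph: that distinct $G$-classes of overgroups of $N$ give distinct $G/N$-classes of orbits, so that coefficients are neither merged nor lost. This follows from the correspondence theorem as indicated. Everything else is bookkeeping.
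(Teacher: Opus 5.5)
Your proposal is correct and follows the paper's own argument: you read off $s_N(x)$ by comparing \eqref{eq:sN-basis} with the orbit-basis expansion of $x^N$ from \eqref{eq:normal-fixed-orbit}, then apply the Burnside average to the unit $u^N$ of $\B{G/N,\D_{\geq N}/N}$. Your explicit check that $(H)\mapsto(H/N)$ is a bijection of conjugacy classes only spells out the comparison step that the paper leaves implicit.
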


\begin{proof}
Comparing~\eqref{eq:sN-basis} with the expansion of $x^N$ in the orbit basis of $G/N$ gives
\[
s_N(x)
=
\operatorname{coeffsum}_{G/N}(x^N).
\]

The fixed-point map is a unital ring homomorphism, so $u^N$ is a unit of $\B{G/N,\D_{\geq N}/N}$. Applying the Burnside average to $G/N$ and $u^N$ yields the second formula.
\end{proof}

\begin{cor}[Extreme values of cumulative core coefficients]
\label{cor:zero-criterion}
Let $u\in\B{G,\D}^{\times}$ and $N\in\D\cap\NS$. Then $s_N(u)=0$ if and only if $|G:N|$ is even and the numbers of $+1$ and $-1$ terms occurring in the average of Theorem~\ref{thm:core-cumulative} are equal. Moreover, $s_N(u)=1$ if and only if every term in the average is $1$, and similarly for $s_N(u)=-1$.

In particular, if $|G:N|$ is odd, then
\[
s_N(u)
=
\varphi_N^{\D}(u)
\in\{-1,1\},
\]
and, for every $g\in G$,
\[
\varphi_{\langle gN\rangle}^{\D_{\geq N}/N}(u^N)
=
\varphi_N^{\D}(u).
\]
\end{cor}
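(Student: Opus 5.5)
The plan is to work directly with the averaging formula of Theorem~\ref{thm:core-cumulative}. Put $Q:=G/N$ and $m:=|Q|=|G:N|$. For $q\in Q$ write $\varepsilon_q:=\varphi_{\langle q\rangle}^{\D_{\geq N}/N}(u^N)$. Since $u^N$ is a unit of $\B{Q,\D_{\geq N}/N}$, each $\varepsilon_q$ lies in $\{\pm1\}$.

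Let $p$ be the number of $q$ with $\varepsilon_q=1$ and $n=m-p$ the number with $\varepsilon_q=-1$. Then
\[
s_N(u)=\frac{p-n}{m},
\qquad |p-n|\leq m .
\]
From this the three cases are immediate.
\begin{itemize}
\item $s_N(u)=0$ exactly when $p=n$. That forces $m=2p$ to be even, and conversely $p=n$ gives $0$.
\item $s_N(u)=1$ exactly when $p-n=m$, that is, $n=0$, so every term is $+1$.
\item $s_N(u)=-1$ exactly when $p=0$, so every term is $-1$.
\end{itemize}

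For the odd-index statement, first note that $p-n\neq0$ when $m$ is odd. Since $s_N(u)\in\{-1,0,1\}$ by Theorem~\ref{thm:core-cumulative}, we get $s_N(u)=\pm1$, and by the case analysis above all the $\varepsilon_q$ are then equal to this common value $s_N(u)$.

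To identify that value, evaluate at $q=1_Q$. Here $\langle 1_Q\rangle$ is the trivial subgroup of $Q$, so $\varepsilon_{1}$ is the cardinality of $u^N$ viewed as a virtual $Q$-set. For $X$ a $G$-set, $\#(X^N)^{\{1\}}=\#X^N$. Hence
\[
\varepsilon_1=\varphi_N^{\D}(u),
\]
where $\varphi_N^{\D}$ makes sense for arbitrary $N$ by the convention of Section~2.

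More generally, for $g\in G$ we have $(X^N)^{\langle gN\rangle}=X^{\langle N,g\rangle}$. Extended linearly, this gives $\varepsilon_{gN}=\varphi_{\langle N,g\rangle}^{\D}(u)$, which is not needed here but confirms the identification. So every term equals $\varphi_N^{\D}(u)$, and $s_N(u)=\varphi_N^{\D}(u)\in\{-1,1\}$.

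There is no real obstacle. The only point needing care is the bookkeeping that identifies the mark of $u^N$ at the trivial subgroup of $G/N$ with the mark $\varphi_N^{\D}(u)$. This is checked on orbit generators $[G/H]$ using~\eqref{eq:normal-fixed-orbit}: both sides equal $|G:H|$ if $N\leq H$, and $0$ otherwise, because $N$ normal gives $(G/H)^N\neq\emptyset\iff N\leq H$.
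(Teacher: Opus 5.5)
Your proposal is correct and takes essentially the same approach as the paper, whose proof simply observes that the right-hand side of Theorem~\ref{thm:core-cumulative} is an integer average of $|G:N|$ signs. Your explicit identification of the $q=1$ term with $\varphi_N^{\D}(u)$, via $\#(X^N)^{\{1\}}=\#X^N$, supplies the one step that the paper leaves implicit in the odd-index assertion.
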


\begin{proof}
This follows because the right-hand side in Theorem~\ref{thm:core-cumulative} is the average of $|G:N|$ numbers, each equal to $\pm1$.
\end{proof}

\subsection{M\"obius inversion on the family of normal subgroups}

Let $\mu$ denote the M\"obius function of the finite poset $\D\cap\NS$.

\begin{prop}[M\"obius inversion]
\label{prop:mobius}
For every $x\in\B{G,\D}$ and $N\in\D\cap\NS$,
\[
c_N(x)
=
\sum_{\substack{M\in\D\cap\NS\\ N\leq M}}
\mu(N,M)s_M(x).
\]
Thus the family of core coefficients and the family of cumulative core coefficients determine one another uniquely.
\end{prop}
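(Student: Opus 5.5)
The argument is the standard Möbius inversion on the finite poset $P:=\D\cap\NS$, applied to the function $N\mapsto c_N(x)$. Fix $x\in\B{G,\D}$. The definition of the cumulative core coefficient reads
\[
 s_N(x)=\sum_{\substack{M\in P\\ N\leq M}}c_M(x)\qquad(N\in P),
\]
so $s(x)$ is the "upward zeta transform" of $c(x)$ on $P$. First I will record that $P$ is a finite poset under inclusion. It is nonempty, since $G\in\D$ and $G\trianglelefteq G$. It is closed under intersection, because $\D$ is a collection and intersections of normal subgroups are normal. This is only needed to ensure that $\mu=\mu_P$ is defined; the lattice structure plays no role.

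Next, I substitute the definition of $s_M(x)$ into the right-hand side of the claim and exchange the order of summation:
\[
 \sum_{\substack{M\in P\\ N\leq M}}\mu(N,M)s_M(x)
 =\sum_{\substack{L\in P\\ N\leq L}}c_L(x)\sum_{\substack{M\in P\\ N\leq M\leq L}}\mu(N,M).
\]
The defining recursion of the Möbius function of a finite poset gives $\sum_{N\leq M\leq L}\mu(N,M)=\delta_{N,L}$. Hence the inner sum vanishes unless $L=N$, and the expression collapses to $c_N(x)$. This proves the formula.

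For the uniqueness statement, the matrices $\zeta$ and $\mu$ on $P$ are mutually inverse upper-triangular matrices with respect to any linear extension of the order. Thus the linear map $(c_N)_{N\in P}\mapsto(s_N)_{N\in P}$ is a bijection of $\Z^{P}$, and each family determines the other.

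I expect no real obstacle here. The only point needing care is the convention for $\mu$: the identity used is $\sum_{N\leq M\leq L}\mu(N,M)=\delta_{N,L}$, the "left" recursion. One should also note that, for finite posets, this recursion is equivalent to the "right" recursion $\sum_{N\leq M\leq L}\mu(M,L)=\delta_{N,L}$, since both say that $\mu$ is the two-sided inverse of $\zeta$. With either convention the argument goes through.
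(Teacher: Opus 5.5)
Your proposal is correct and follows the paper's argument. The paper simply observes that $s_N(x)=\sum_{M\geq N}c_M(x)$ and invokes M\"obius inversion on the finite poset $\D\cap\NS$. You write out the standard verification of that inversion explicitly, by interchanging the order of summation and using $\sum_{N\leq M\leq L}\mu(N,M)=\delta_{N,L}$.
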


\begin{proof}
By definition,
\[
s_N(x)
=
\sum_{\substack{M\in\D\cap\NS\\ N\leq M}}c_M(x).
\]
The assertion is the M\"obius inversion formula on the finite poset $\D\cap\NS$.
\end{proof}

\begin{cor}[A bound for core coefficients via the M\"obius function]
\label{cor:mobius-core-bound}
For every $u\in\B{G,\D}^{\times}$ and $N\in\D\cap\NS$,
\[
|c_N(u)|
\leq
\sum_{\substack{M\in\D\cap\NS\\ N\leq M}}
|\mu(N,M)|.
\]
\end{cor}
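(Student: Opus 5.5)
The bound follows by combining the Möbius inversion formula of Proposition~\ref{prop:mobius} with the three-valuedness of cumulative core coefficients from Theorem~\ref{thm:core-cumulative}. No further structure of $u$ is needed beyond the fact that each $s_M(u)$ lies in $\{-1,0,1\}$.

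First, fix $u\in\B{G,\D}^{\times}$ and $N\in\D\cap\NS$. Proposition~\ref{prop:mobius} gives
\[
c_N(u)=\sum_{\substack{M\in\D\cap\NS\\ N\leq M}}\mu(N,M)s_M(u).
\]
Taking absolute values and applying the triangle inequality yields
\[
|c_N(u)|\leq\sum_{\substack{M\in\D\cap\NS\\ N\leq M}}|\mu(N,M)|\,|s_M(u)|.
\]

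Second, each $M$ in the sum lies in $\D\cap\NS$. Theorem~\ref{thm:core-cumulative} applies to $M$ in place of $N$ and gives $s_M(u)\in\{-1,0,1\}$, hence $|s_M(u)|\leq1$. Substituting this estimate gives the claimed bound.

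There is no real obstacle here. The only point to check is that the Möbius function is taken on the poset $\D\cap\NS$, the same poset over which $s_N$ sums. This holds by the definitions, so the inversion in Proposition~\ref{prop:mobius} is applied exactly as stated.
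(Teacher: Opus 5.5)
Your proposal is correct and follows the paper's own proof exactly: combine the M\"obius inversion formula of Proposition~\ref{prop:mobius} with $s_M(u)\in\{-1,0,1\}$ from Theorem~\ref{thm:core-cumulative}, then apply the triangle inequality. Your check that $\mu$ is taken on the same poset $\D\cap\NS$ over which $s_N$ sums is the right consistency point, and it holds by definition.
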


\begin{proof}
This follows from Proposition~\ref{prop:mobius}, Theorem~\ref{thm:core-cumulative}, and the triangle inequality.
\end{proof}

\begin{rem}
The right-hand side of Corollary~\ref{cor:mobius-core-bound} depends only on the interval
\[
\{M\in\D\cap\NS\mid N\leq M\}
\]
in the finite poset $\D\cap\NS$. In particular, when $\D=\operatorname{Sub}(G)$, bounds for core coefficients are controlled by the M\"obius function of the lattice of normal subgroups.
\end{rem}

Since $G$ has no normal subgroup properly containing $G$, for every $u\in\B{G,\D}^{\times}$,
\[
c_G(u)
=
s_G(u)
=
\varphi_G^{\D}(u)
\in\{-1,1\}.
\]

\begin{cor}[Parity of core coefficients at odd index]
\label{cor:odd-index-parity}
Let $u\in\B{G,\D}^{\times}$ and $N\in\D\cap\NS$ with $N<G$. If $|G:N|$ is odd, then
\[
c_N(u)\in2\Z.
\]
In particular, if $|G|$ is odd, then
\[
\mathfrak n_{G,\D}(u)
\equiv
1_{\B G}
\pmod{2\B{G,\D\cap\NS}}.
\]
\end{cor}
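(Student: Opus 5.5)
We argue by downward induction on $N$ in the finite poset $\D\cap\NS$, using the defining relation
\[
c_N(u)=s_N(u)-\sum_{\substack{M\in\D\cap\NS\\ N<M}}c_M(u).
\]
We prove the sharper claim that, for every $N\in\D\cap\NS$ with $|G:N|$ odd,
\[
s_N(u)=\varphi_G^{\D}(u)
\qquad\text{and}\qquad
c_N(u)\in2\Z \text{ whenever } N<G.
\]

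First we show $s_N(u)=\varphi_G^{\D}(u)$. Corollary~\ref{cor:zero-criterion} gives
\[
s_N(u)=\varphi_N^{\D}(u)=\varphi_{\langle gN\rangle}^{\D_{\geq N}/N}(u^N)
\]
for all $g$. The remaining step is to identify $\varphi_N^{\D}(u)$ with $\varphi_G^{\D}(u)$. We read each mark on the $G/N$-side: $\varphi_{\langle gN\rangle}(u^N)=\varphi_{\langle N,g\rangle}^{\D}(u)$. So all marks of $u$ at subgroups $\langle N,g\rangle$ are equal to $\varphi_N^{\D}(u)$. Since the hypotheses of Corollary~\ref{cor:zero-criterion} only require $|G:N|$ odd, we may apply it to any normal $M\geq N$ in $\D$. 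For $M=G$ this gives only the tautology $s_G(u)=\varphi_G^{\D}(u)$.

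To bridge from $N$ to $G$, we use that $u^N$ is a unit of $\B{G/N,\D_{\geq N}/N}$ with $|G/N|$ odd. For a unit $v$ of a Burnside ring of an odd-order group $Q$, every mark equals the mark at $Q$. This follows by applying the Burnside average to each $v^{L}$ with $L$ in the collection, or more simply from the congruence $\varphi_H(v)\equiv\varphi_{H'}(v)\pmod 2$ along chains with index a power of an odd prime, via the standard congruences. Concretely, I would first try the following chain of corollary applications. For any $M\in\D\cap\NS$ with $N\leq M$, $|G:M|$ is odd, so $s_M(u)=\varphi_M^{\D}(u)$, and every mark $\varphi_{\langle M,g\rangle}^{\D}(u)$ equals $\varphi_M^{\D}(u)$. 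Taking $g\in G$ arbitrary and using that $G/N$ is generated by cyclic subgroups, we argue that the common value propagates up to $G$. This propagation is the step I expect to be the main obstacle. If it fails in general collections, I would fall back on the Feit–Thompson-free fact that units of $\B Q$ for odd $|Q|$ are $\pm1$, which is Dress's solvability-free statement that $\B Q$ has only trivial idempotents mod $2$ for odd order.

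Granting $s_M(u)=\varphi_G^{\D}(u)=:\varepsilon$ for all $M\geq N$ in the interval, induction finishes the proof. We have $c_G(u)=\varepsilon$, and for $N<M<G$ we have $c_M(u)\in2\Z$. Hence
\[
c_N(u)=\varepsilon-\varepsilon-\sum_{N<M<G}c_M(u)\in2\Z.
\]

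For the \emph{in particular}, note that if $|G|$ is odd, every index is odd. Then $\mathfrak n_{G,\D}(u)=\varepsilon[G/G]+\sum_{N<G}c_N(u)[G/N]$. Since $\varepsilon=\pm1\equiv1$, this gives
\[
\mathfrak n_{G,\D}(u)\equiv1_{\B G}\pmod{2\B{G,\D\cap\NS}}.
\]
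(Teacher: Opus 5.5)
There is a genuine gap, and it is where you expected it: the ``propagation'' step asserting $s_M(u)=\varphi_G^{\D}(u)$, i.e.\ $\varphi_M^{\D}(u)=\varphi_G^{\D}(u)$, is never proved, and none of your sketched routes proves it.

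\begin{itemize}
\item Corollary~\ref{cor:zero-criterion} only gives $\varphi^{\D}_{\langle M,g\rangle}(u)=\varphi_M^{\D}(u)$. The subgroups $\langle M,g\rangle$ are in general not normal, so you cannot iterate.
\item Marks at the subgroups $\langle M,g\rangle$ do not determine the mark at $G$ merely because these subgroups generate $G$. For $G=A_5$ and Dress's idempotent $e\in\B G$ supported on the solvable subgroups, the unit $1_{\B G}-2e$ has mark $-1$ at every proper subgroup and $+1$ at $G$.
\item The congruence modulo $2$ you mention is vacuous, since all marks of a unit are $\pm1$.
\item Your fallback is not Feit--Thompson-free. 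Dress's theorem says precisely that $\B Q$ has only trivial idempotents if and only if $Q$ is solvable. Your sharper claim, specialized to $\D=\operatorname{Sub}(G)$ and $N=1$, would itself imply that groups of odd order are solvable: for any non-solvable $G$, the analogous unit $u=1_{\B G}-2e$ has $\varphi_1^G(u)=-1\neq1=\varphi_G^G(u)$.
\end{itemize}

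So the equality you want holds only by virtue of Feit--Thompson. Once $G/N$ is known to be solvable, you can climb a series of odd prime indices using Yoshida's criterion. That is far too heavy for a parity statement, and as written the step is unproved.

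The missing observation is that you never need equality, only parity. For every $M\in\D\cap\NS$ with $N\leq M$, the index $|G:M|$ is odd. So Corollary~\ref{cor:zero-criterion} gives $s_M(u)\in\{-1,1\}$, in particular $s_M(u)\equiv1\pmod 2$. Also $c_G(u)=\varphi_G^{\D}(u)$ is odd. Your own downward induction then closes immediately:
\[
c_N(u)=s_N(u)-c_G(u)-\sum_{N<M<G}c_M(u)\equiv1-1-0\equiv0\pmod2.
\]
This is the paper's argument, phrased there as M\"obius inversion reduced modulo $2$, using $\sum_{N\leq M}\mu(N,M)=0$ for $N<G$. Your treatment of the ``in particular'' clause is then fine as written.
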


\begin{proof}
If $M\in\D\cap\NS$ and $N\leq M$, then $|G:M|$ is also odd. By Corollary~\ref{cor:zero-criterion},
\[
s_M(u)\in\{-1,1\},
\qquad
s_M(u)\equiv1\pmod2.
\]
Therefore Proposition~\ref{prop:mobius} gives
\[
c_N(u)
\equiv
\sum_{\substack{M\in\D\cap\NS\\N\leq M}}
\mu(N,M)
=
0
\pmod2.
\]
Since $c_G(u)\equiv1\pmod2$, the final assertion follows as well.
\end{proof}

\begin{prop}[The case of prime index]
\label{prop:prime-index}
Let $u\in\B{G,\D}^{\times}$ and $N\in\D\cap\NS$, and suppose that $|G:N|=p$ is prime. Then
\[
c_N(u)
=
\begin{cases}
0,
&p\text{ odd},\\[1mm]
\dfrac{\varphi_N^{\D}(u)-\varphi_G^{\D}(u)}{2},
&p=2.
\end{cases}
\]
In particular, if $p=2$, then
\[
c_N(u)\in\{-1,0,1\}.
\]
\end{prop}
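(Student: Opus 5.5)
Since $|G:N|=p$ is prime, the interval $\{M\in\D\cap\NS\mid N\leq M\}$ consists of exactly the two elements $N<G$. Indeed, a normal subgroup $M$ with $N\leq M\leq G$ corresponds to a subgroup of $G/N\cong C_p$, and $G\in\D$ by definition of a collection. The M\"obius function on this two-element chain is $\mu(N,N)=1$ and $\mu(N,G)=-1$. Proposition~\ref{prop:mobius}, or simply the definition of $s_N$, therefore gives
\[
c_N(u)=s_N(u)-s_G(u)=s_N(u)-\varphi_G^{\D}(u),
\]
where we used the observation above that $s_G(u)=c_G(u)=\varphi_G^{\D}(u)$.

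It remains to compute $s_N(u)$ with Theorem~\ref{thm:core-cumulative}. The quotient $G/N$ is cyclic of order $p$. For $g\in N$ we have $\langle gN\rangle$ trivial, and the corresponding mark is $\varphi_{N/N}(u^N)=\varphi_N^{\D}(u)$. For $g\notin N$ we have $\langle gN\rangle=G/N$, and the mark is $\varphi_{G/N}(u^N)=\varphi_G^{\D}(u)$. Here I use the fact that marks of fixed-point elements satisfy $\varphi_{K/N}(x^N)=\varphi_K^{\D}(x)$ for $N\leq K$, since $(X^N)^{K/N}=X^K$. The average therefore becomes
\[
s_N(u)=\frac{\varphi_N^{\D}(u)+(p-1)\varphi_G^{\D}(u)}{p}.
\]

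Now split into cases.

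\textbf{Case $p$ odd.} Corollary~\ref{cor:zero-criterion} (odd index) gives $s_N(u)=\varphi_N^{\D}(u)$, and the marks are all equal, so $\varphi_N^{\D}(u)=\varphi_G^{\D}(u)$. Hence $c_N(u)=0$. This can also be read off the displayed average directly: integrality forces $\varphi_N\equiv\varphi_G \pmod p$, and both values lie in $\{\pm1\}$ with $p\geq3$, so they are equal.

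\textbf{Case $p=2$.} The formula reads $s_N(u)=(\varphi_N^{\D}(u)+\varphi_G^{\D}(u))/2$. Subtracting $\varphi_G^{\D}(u)$ yields $c_N(u)=(\varphi_N^{\D}(u)-\varphi_G^{\D}(u))/2$. Since both marks are $\pm1$, this value lies in $\{-1,0,1\}$.

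The only point needing care is the identification of marks of $u^N$ with marks of $u$ at subgroups containing $N$. This is immediate from $(X^N)^{K/N}=X^K$ and linearity, so I expect no real obstacle.
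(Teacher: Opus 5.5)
Your proof is correct and follows the paper's argument. You reduce to $c_N(u)=s_N(u)-s_G(u)$ via the two-element interval $\{N,G\}$, evaluate $s_N(u)=\bigl(\varphi_N^{\D}(u)+(p-1)\varphi_G^{\D}(u)\bigr)/p$ by the Burnside average over $G/N\cong C_p$, and settle odd $p$ by integrality, where your appeal to Corollary~\ref{cor:zero-criterion} is an equivalent shortcut.
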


\begin{proof}
The interval
\[
\{M\in\D\cap\NS\mid N\leq M\leq G\}
\]
is equal to $\{N,G\}$, so
\[
c_N(u)=s_N(u)-s_G(u).
\]
By Theorem~\ref{thm:core-cumulative},
\[
s_N(u)
=
\frac{\varphi_N^{\D}(u)+(p-1)\varphi_G^{\D}(u)}{p},
\]
and hence
\[
c_N(u)
=
\frac{\varphi_N^{\D}(u)-\varphi_G^{\D}(u)}{p}.
\]
The assertion for $p=2$ is immediate. Now suppose that $p$ is odd. The numerator is $0$ or $\pm2$, while the quotient is an integer. Hence the numerator must be divisible by $p$, and therefore it is $0$.
\end{proof}

\subsection{Compatibility with normal quotients}

\begin{prop}[Compatibility with normal quotients]\label{prop:quotient-compatibility}
Let $N\in\D\cap\NS$ and $x\in\B{G,\D}$. Then
\[
\mathfrak n_{G/N,\D_{\geq N}/N}(x^N)
=
\mathfrak n_{G,\D}(x)^N.
\]
Moreover, if $M\in\D\cap\NS$ and $N\leq M$, then
\[
c_{M/N}(x^N)
=
c_M(x),
\qquad
s_{M/N}(x^N)
=
s_M(x).
\]
\end{prop}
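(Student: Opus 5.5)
By $\Z$-linearity of every map involved ($\mathfrak n$, the fixed-point map $(-)^N$, and $c_M$, $s_M$), it suffices to prove the first identity on a single orbit-basis element $x=[G/H]$ with $H\in\D$. The coefficient identities will then be read off from it.

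\textbf{Case $N\nleq H$.} By \eqref{eq:normal-fixed-orbit} we have $[G/H]^N=0$, so the left-hand side vanishes. Since $N$ is normal, \eqref{eq:normal-core-containment} gives $N\nleq\operatorname{core}_G(H)$. Hence $\mathfrak n_{G,\D}([G/H])^N=[G/\operatorname{core}_G(H)]^N=0$ as well.

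\textbf{Case $N\leq H$.} Here $[G/H]^N=[(G/N)/(H/N)]$. The one genuine point to check is the equality
\[
\operatorname{core}_{G/N}(H/N)=\operatorname{core}_G(H)/N .
\]
This holds because conjugation in $G/N$ by $gN$ sends $H/N$ to ${}^gH/N$, and, since all ${}^gH$ contain $N$, intersection commutes with passing to the quotient by $N$:
\[
\bigcap_g({}^gH/N)=\Bigl(\bigcap_g{}^gH\Bigr)/N .
\]
Then the left-hand side equals $[(G/N)/(\operatorname{core}_G(H)/N)]$. Because $N\leq\operatorname{core}_G(H)$, the right-hand side is
\[
[G/\operatorname{core}_G(H)]^N=[(G/N)/(\operatorname{core}_G(H)/N)],
\]
again by \eqref{eq:normal-fixed-orbit}. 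One should also note that $\operatorname{core}_G(H)\in\D\cap\NS$, so the quotient lies in $(\D_{\geq N}/N)\cap\operatorname{NS}(G/N)$ and the target ring is the right one.

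\textbf{Coefficient identities.} For the $s$-statement, it is simplest to use Theorem~\ref{thm:core-cumulative} twice. We have $s_M(x)=\operatorname{coeffsum}_{G/M}(x^M)$, and $s_{M/N}(x^N)=\operatorname{coeffsum}_{(G/N)/(M/N)}\bigl((x^N)^{M/N}\bigr)$. It remains to identify $(x^N)^{M/N}$ with $x^M$ under $(G/N)/(M/N)\cong G/M$. This is immediate on orbits: $X^M=(X^N)^{M/N}$ for any $G$-set $X$, because $N\leq M$.

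For the $c$-statement, one route is to apply M\"obius inversion (Proposition~\ref{prop:mobius}). The poset map $M\mapsto M/N$ is an isomorphism from $\{M\in\D\cap\NS\mid N\leq M\}$ onto $(\D_{\geq N}/N)\cap\operatorname{NS}(G/N)$, and this interval is upward closed, so the M\"obius functions agree. Alternatively, one compares the coefficients of $[(G/N)/(M/N)]$ in the first identity directly: the coefficient of $[G/M]$ in $\mathfrak n_{G,\D}(x)$ is $c_M(x)$, and $(-)^N$ maps $[G/M]$ bijectively onto these basis elements for $M\geq N$ while killing the others.

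No step is a real obstacle. The only care needed is the core-of-quotient computation and the bookkeeping that the indexing sets correspond bijectively.
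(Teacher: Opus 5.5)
Your proof is correct and matches the paper's argument. The first identity is proved orbit by orbit via the case split on whether $N\leq H$, using $\operatorname{core}_{G/N}(H/N)=\operatorname{core}_G(H)/N$, which you justify and the paper only asserts; your second route to the $c$-identity, reading off the coefficient of $[(G/N)/(M/N)]$, is exactly what the paper does. The only difference is that the paper gets the $s$-identity by summing the $c$-identity over normal subgroups containing $M$, whereas you derive it independently from Theorem~\ref{thm:core-cumulative} and $X^M=(X^N)^{M/N}$; both routes work.
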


\begin{proof}
By~\eqref{eq:normal-core-containment}, the conditions $N\leq H$ and $N\leq\operatorname{core}_G(H)$ are equivalent. If $N\leq H$, then
\[
\operatorname{core}_{G/N}(H/N)
=
\operatorname{core}_G(H)/N.
\]

Consider an orbit-basis element $[G/H]$. If $N\nleq H$, then~\eqref{eq:normal-fixed-orbit} gives
\[
[G/H]^N
=
[G/\operatorname{core}_G(H)]^N
=
0.
\]
If $N\leq H$, then the same formula and the identity above give
\[
\begin{aligned}
\mathfrak n_{G/N,\D_{\geq N}/N}\bigl([G/H]^N\bigr)
&=
[(G/N)/(\operatorname{core}_G(H)/N)]\\
&=
[G/\operatorname{core}_G(H)]^N\\
&=
\mathfrak n_{G,\D}([G/H])^N.
\end{aligned}
\]
The first assertion follows by additivity.

Comparing coefficients in the normal orbit basis yields
\[
c_{M/N}(x^N)=c_M(x)
\]
for $N\leq M$. Summing over normal subgroups containing $M$ then gives
\[
s_{M/N}(x^N)=s_M(x).
\]
\end{proof}

\subsection{Examples and bounds on coefficients}

\begin{ex}[$C_2$]
Let $G=C_2$ and put
\[
u=1-[G/1].
\]
Then $u$ is a unit and
\[
c_G(u)=1,
\qquad c_1(u)=-1,
\]
whereas
\[
s_G(u)=1,
\qquad s_1(u)=0.
\]
\end{ex}

\begin{thm}[Core coefficients when the normal-subgroup lattice is a chain]
\label{thm:chain-normal-core-bound}
Suppose that the normal subgroups of a finite group $G$ form the chain
\[
1=N_0<N_1<\cdots<N_t=G.
\]
For every unit $u\in\B G^\times$,
\[
c_{N_i}(u)
=
\begin{cases}
s_{N_i}(u)-s_{N_{i+1}}(u),&0\leq i<t,\\
s_G(u),&i=t.
\end{cases}
\]
In particular,
\[
|c_N(u)|\leq2
\]
for every $N\trianglelefteq G$.
\end{thm}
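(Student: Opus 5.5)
We take $\D=\operatorname{Sub}(G)$, so that $\D\cap\NS=\NS$ is totally ordered by hypothesis. The argument is the definition of $s_N$ combined with the three-valued bound of Theorem~\ref{thm:core-cumulative}.

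First we read off the sets appearing in the definition of $s_{N_i}$. Since every normal subgroup is some $N_j$, the normal subgroups containing $N_i$ are exactly $N_i,N_{i+1},\dots,N_t$. Hence
\[
s_{N_i}(u)=\sum_{j=i}^{t}c_{N_j}(u).
\]
Subtracting the same identity for $i+1$ gives $c_{N_i}(u)=s_{N_i}(u)-s_{N_{i+1}}(u)$ for $0\leq i<t$. For $i=t$ the sum has the single term $c_G(u)$, so $c_G(u)=s_G(u)$. Alternatively, one can invoke Proposition~\ref{prop:mobius}: in a chain, $\mu(N_i,N_i)=1$, $\mu(N_i,N_{i+1})=-1$, and all other values vanish.

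Next, for the bound, Theorem~\ref{thm:core-cumulative} gives $s_{N_j}(u)\in\{-1,0,1\}$ for every $j$, since $u$ is a unit. The difference of two such numbers has absolute value at most $2$, and $|s_G(u)|=1$. Because every normal subgroup $N$ equals some $N_i$, this yields $|c_N(u)|\leq 2$ for all $N\trianglelefteq G$.

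There is no real obstacle here. The only point needing care is the observation that the chain hypothesis makes the upward interval $\{M\trianglelefteq G\mid N_i\leq M\}$ exactly the tail of the chain.
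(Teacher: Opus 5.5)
Your proposal is correct and follows the paper's proof essentially verbatim: you read off $s_{N_i}(u)=\sum_{j=i}^{t}c_{N_j}(u)$ from the chain hypothesis, take successive differences, and bound using $s_N(u)\in\{-1,0,1\}$ from Theorem~\ref{thm:core-cumulative}. Your remarks that $|s_G(u)|=1$ and that Proposition~\ref{prop:mobius} gives the same formula are harmless extras.
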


\begin{proof}
By definition,
\[
s_{N_i}(u)
=
\sum_{j=i}^{t}c_{N_j}(u)
\]
for $0\leq i\leq t$. Hence
\[
c_{N_i}(u)
=
s_{N_i}(u)-s_{N_{i+1}}(u)
\]
for $0\leq i<t$, while
\[
c_G(u)=s_G(u).
\]
Theorem~\ref{thm:core-cumulative} gives $s_N(u)\in\{-1,0,1\}$, and the assertion follows.
\end{proof}

\begin{cor}[Core coefficients for symmetric groups]
\label{cor:symmetric-core-bound}
For every $n\geq2$, every unit $u\in\B{S_n}^{\times}$, and every normal subgroup $N\trianglelefteq S_n$,
\[
|c_N(u)|\leq2.
\]
\end{cor}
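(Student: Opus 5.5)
The plan is to reduce the statement to Theorem~\ref{thm:chain-normal-core-bound}. For that, I only need to check that for every $n\geq2$ the normal subgroups of $S_n$ form a chain.

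I will use the classical list of normal subgroups of $S_n$:
\begin{itemize}
 \item for $n=2$ they are $1<S_2$;
 \item for $n=3$ and for $n\geq5$ they are $1<A_n<S_n$;
 \item for $n=4$ they are $1<V_4<A_4<S_4$, where $V_4$ is the Klein four-group.
\end{itemize}

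For $n\geq5$ this follows from the simplicity of $A_n$. Let $N\trianglelefteq S_n$. Then $N\cap A_n$ is normal in $A_n$, so it is either $1$ or $A_n$. If $N\cap A_n=A_n$, then $A_n\leq N$, and since $|S_n:A_n|=2$ we get $N=A_n$ or $N=S_n$. If $N\cap A_n=1$, then $|N|\leq2$. A normal subgroup of order $2$ would be central, but $Z(S_n)=1$, so $N=1$.

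For $n=3$ and $n=4$ I will verify the list directly using class sizes. For $n=3$, a normal subgroup must be a union of conjugacy classes including the identity. The class sizes are $1,3,2$, and the only unions whose sizes divide $6$ are $1$, $1+2=3$ and $6$; these give $1$, $A_3$ and $S_3$. For $n=4$ the class sizes are $1,6,3,8,6$. The only unions containing the identity whose sizes divide $24$ and which are closed under multiplication are of size $1$, $4=1+3$, $12=1+3+8$ and $24$; these are $1$, $V_4$, $A_4$ and $S_4$, and they are totally ordered by inclusion.

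Once the chain is established, Theorem~\ref{thm:chain-normal-core-bound} applies directly and gives $|c_N(u)|\leq2$ for every unit $u\in\B{S_n}^\times$ and every $N\trianglelefteq S_n$. The only step needing real care is the case $n=4$, where the chain has length three; beyond that it is a routine check.
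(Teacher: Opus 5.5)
Your proposal is correct and is the paper's proof: it too observes that the normal subgroups of $S_n$ form a chain ($1<S_2$, $1<A_3<S_3$, $1<V_4<A_4<S_4$, and $1<A_n<S_n$ for $n\geq5$) and then applies Theorem~\ref{thm:chain-normal-core-bound}. Your arguments via simplicity of $A_n$ and via class sizes for $n=3,4$ only add a verification of this classical list, which the paper takes for granted.
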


\begin{proof}
For $n=2,3,4$, the normal subgroups of $S_n$ are respectively
\[
1<S_2,
\qquad
1<A_3<S_3,
\qquad
1<V_4<A_4<S_4,
\]
and for $n\geq5$ they are
\[
1<A_n<S_n.
\]
Thus in every case the normal subgroups form a chain, so Theorem~\ref{thm:chain-normal-core-bound} applies.
\end{proof}

\begin{ex}[Units and core coefficients for $S_3$]
\label{ex:S3-core-coefficients}
Choose a subgroup of order $2$ in $S_3$ and denote it by $C_2$. Put
\[
u_{A_3}
:=
1_{\B{S_3}}-[S_3/A_3],
\qquad
u_{C_2}
:=
1_{\B{S_3}}-2[S_3/C_2]+[S_3/1].
\]
A direct calculation gives
\[
\B{S_3}^{\times}
=
\{
\pm1_{\B{S_3}},
\pm u_{A_3},
\pm u_{C_2},
\pm u_{A_3}u_{C_2}
\}.
\]
The normal subgroups of $S_3$ are $1$, $A_3$, and $S_3$, and the core coefficients and cumulative core coefficients of these units are as follows:
\[
\begin{array}{c|rrr|rrr}
u
&c_1(u)&c_{A_3}(u)&c_{S_3}(u)
&s_1(u)&s_{A_3}(u)&s_{S_3}(u)\\ \hline
1_{\B{S_3}}
&0&0&1&1&1&1\\
-1_{\B{S_3}}
&0&0&-1&-1&-1&-1\\
u_{A_3}
&0&-1&1&0&0&1\\
-u_{A_3}
&0&1&-1&0&0&-1\\
u_{C_2}
&-1&0&1&0&1&1\\
-u_{C_2}
&1&0&-1&0&-1&-1\\
u_{A_3}u_{C_2}
&-1&-1&1&-1&0&1\\
-u_{A_3}u_{C_2}
&1&1&-1&1&0&-1
\end{array}
\]
\end{ex}

\begin{ex}[Sharpness of the bound for $S_4$]
\label{ex:S4-sharp-core-bound}
Consider the subgroups
\[
\langle(12)\rangle,
\qquad
\langle(1234)\rangle,
\qquad
\operatorname{Stab}_{S_4}(4)\cong S_3
\]
of $S_4$, and put
\[
u_{S_4}
:=
1_{\B{S_4}}
+[S_4/\langle(12)\rangle]
-[S_4/\langle(1234)\rangle]
-2[S_4/\operatorname{Stab}_{S_4}(4)].
\]
A direct calculation shows that $u_{S_4}\in\B{S_4}^{\times}$ and
\[
\begin{array}{c|rrrr|rrrr}
u
&c_1(u)&c_{V_4}(u)&c_{A_4}(u)&c_{S_4}(u)
&s_1(u)&s_{V_4}(u)&s_{A_4}(u)&s_{S_4}(u)\\ \hline
u_{S_4}
&-2&0&0&1&-1&1&1&1
\end{array}
\]
Hence
\[
|c_1(u_{S_4})|=2,
\]
so the bound in Corollary~\ref{cor:symmetric-core-bound} is sharp.
\end{ex}

\begin{ex}[Core coefficients for elementary abelian $2$-groups]\label{ex:elementary-abelian}
Let $G=C_2^r$ with $r\geq1$, and let
\[
\mathcal H
:=
\{H\leq G\mid |G:H|=2\}
\]
be the set of all subgroups of index $2$. Define
\[
u_G
:=
\prod_{H\in\mathcal H}
\bigl(1_{\B G}-[G/H]\bigr)
\in\B G^\times.
\]

For $L\leq G$ and $H\in\mathcal H$,
\[
\varphi_L^G\bigl(1_{\B G}-[G/H]\bigr)
=
\begin{cases}
-1,&L\leq H,\\
1,&L\nleq H,
\end{cases}
\]
so
\[
\varphi_L^G(u_G)
=
(-1)^{\#\{H\in\mathcal H\mid L\leq H\}}.
\]

By the correspondence theorem, the subgroups of index $2$ in $G$ containing $L$ correspond bijectively to the subgroups of index $2$ in $G/L$. Such subgroups are precisely the kernels of nontrivial homomorphisms
\[
G/L\longrightarrow C_2.
\]
Since $\operatorname{Aut}(C_2)$ is trivial, distinct nontrivial homomorphisms have distinct kernels. If $G/L\cong C_2^k$, then
\[
\#\operatorname{Hom}(G/L,C_2)
=
2^k
=
|G:L|.
\]
Therefore
\[
\#\{H\in\mathcal H\mid L\leq H\}
=
|G:L|-1.
\]
This number is odd when $L<G$ and is $0$ when $L=G$. Hence
\[
\varphi_L^G(u_G)
=
\begin{cases}
1,&L=G,\\
-1,&L<G.
\end{cases}
\]

Now let $N\leq G$. By Theorem~\ref{thm:core-cumulative} and the above computation of marks,
\[
s_N(u_G)
=
\begin{cases}
1,&N=G,\\
0,&|G:N|=2,\\
-1,&|G:N|\geq4.
\end{cases}
\]
Indeed, when $|G:N|=2$, the marks appearing in the average are $\varphi_N^G(u_G)=-1$ and $\varphi_G^G(u_G)=1$. If $|G:N|\geq4$, then for every $g\in G$,
\[
|\langle N,g\rangle:N|\leq2<|G:N|,
\]
so $\langle N,g\rangle<G$, and all marks occurring in the average are $-1$.

For an elementary abelian $2$-group, the M\"obius function of the subgroup lattice is given, for $N\leq M\leq G$ with
\[
|M:N|=2^j,
\]
by
\[
\mu(N,M)
=
(-1)^j2^{\binom{j}{2}}.
\]

Suppose
\[
|G:N|=2^k.
\]
If $k=0$, that is, if $N=G$, then
\[
c_G(u_G)=1.
\]
Assume now that $k\geq1$. M\"obius inversion gives
\[
\begin{aligned}
c_N(u_G)
&=
\sum_{N\leq M\leq G}\mu(N,M)s_M(u_G)\\
&=
\mu(N,G)
-
\sum_{\substack{N\leq M\leq G\\ |G:M|\geq4}}
\mu(N,M).
\end{aligned}
\]
Since $N<G$,
\[
\sum_{N\leq M\leq G}\mu(N,M)=0.
\]
Thus
\[
c_N(u_G)
=
\sum_{\substack{N\leq M\leq G\\ |G:M|=2}}
\mu(N,M)
+
2\mu(N,G).
\]

There are $2^k-1$ subgroups of index $2$ in $G$ containing $N$, and for each such subgroup $M$ one has $|M:N|=2^{k-1}$. Therefore
\[
\begin{aligned}
c_N(u_G)
&=
(2^k-1)(-1)^{k-1}2^{\binom{k-1}{2}}
+
2(-1)^k2^{\binom{k}{2}}\\
&=
(-1)^k2^{\binom{k-1}{2}}.
\end{aligned}
\]
Hence
\[
c_N(u_G)
=
\begin{cases}
1,&N=G,\\
(-1)^k2^{\binom{k-1}{2}},
&|G:N|=2^k,\quad k\geq1.
\end{cases}
\]

In particular, for $N=1$,
\[
|c_1(u_G)|
=
2^{\binom{r-1}{2}}.
\]
By Theorem~\ref{thm:core-cumulative}, cumulative core coefficients of units always satisfy
\[
s_N(u)\in\{-1,0,1\}.
\]
By contrast, this example shows that the absolute values of individual core coefficients are unbounded among finite groups.
\end{ex}

\section{Refinement of cumulative coefficients by Yoshida characters}

In this section we work in the ordinary Burnside ring $\B G$. Since a partial Burnside ring is a subring of $\B G$ with the same identity element, each of its units is also a unit of $\B G$. Thus Yoshida's criterion applies to such units as well.

\subsection{Yoshida characters and cumulative core coefficients}

By Yoshida's unit criterion~\cite[Proposition~6.5]{Yoshida1990}, for every $u\in\B G^\times$ and $H\leq G$, the map
\[
N_G(H)/H\longrightarrow\{\pm1\},
\qquad
gH\longmapsto
\frac{\varphi_{\langle H,g\rangle}^{G}(u)}
     {\varphi_H^{G}(u)}
\]
is a group homomorphism. We refer to this homomorphism condition as Yoshida's criterion.

\begin{dfn}[Yoshida character associated with a normal core]
\label{dfn:yoshida-character}
For $N\trianglelefteq G$ and $u\in\B G^\times$, define
\[
\chi_N^u:G/N\longrightarrow\{\pm1\},
\qquad
\chi_N^u(gN):=
\frac{\varphi_{\langle N,g\rangle}^{G}(u)}{\varphi_N^{G}(u)}.
\]
\end{dfn}

Since $N_G(N)=G$, Yoshida's criterion shows that $\chi_N^u$ is a group homomorphism.

\begin{thm}[Cumulative core coefficients via Yoshida characters]
\label{thm:yoshida-dichotomy}
Let $u\in\B G^\times$ and $N\trianglelefteq G$. Then
\[
s_N(u)
=
\begin{cases}
\varphi_N^{G}(u),&\chi_N^u=1_{G/N},\\
0,&\chi_N^u\neq1_{G/N}.
\end{cases}
\]
\end{thm}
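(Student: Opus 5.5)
The plan is to start from the Burnside-average formula of Theorem~\ref{thm:core-cumulative} with $\D=\operatorname{Sub}(G)$, rewrite each mark appearing there as a mark of $u$ itself, and then use orthogonality of the linear character $\chi_N^u$.

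\textbf{Step 1: identify the marks.} Let $\pi:G\to G/N$ be the projection. For any finite $G$-set $X$ and any $g\in G$, a point of $X^N$ is fixed by $\langle gN\rangle$ exactly when it is fixed by $\pi^{-1}(\langle gN\rangle)=\langle N,g\rangle$. Hence
\[
\varphi_{\langle gN\rangle}^{G/N}(x^N)=\varphi_{\langle N,g\rangle}^{G}(x)
\]
for all $x\in\B G$, by additivity.

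\textbf{Step 2: rewrite the average.} Substituting Step~1 into Theorem~\ref{thm:core-cumulative} and using Definition~\ref{dfn:yoshida-character} gives
\[
s_N(u)=\frac{1}{|G:N|}\sum_{gN\in G/N}\varphi_{\langle N,g\rangle}^{G}(u)
=\frac{\varphi_N^G(u)}{|G:N|}\sum_{gN\in G/N}\chi_N^u(gN).
\]
The second equality uses $\varphi_{\langle N,g\rangle}^G(u)=\varphi_N^G(u)\,\chi_N^u(gN)$. This holds because $\varphi_N^G(u)=\pm1$ is nonzero, and the quotient is well defined on cosets since $\langle N,g\rangle$ depends only on $gN$.

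\textbf{Step 3: orthogonality.} Since $N_G(N)=G$, Yoshida's criterion makes $\chi_N^u$ a homomorphism $G/N\to\{\pm1\}$. If it is trivial, the sum equals $|G:N|$, and so $s_N(u)=\varphi_N^G(u)$. If it is nontrivial, pick $h$ with $\chi_N^u(hN)=-1$. Left translation by $hN$ permutes $G/N$ and multiplies the sum by $-1$, so the sum is $0$ and $s_N(u)=0$.

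No step is a serious obstacle. The only point needing care is the identification in Step~1, that $\langle gN\rangle$-fixed points of $X^N$ are exactly the $\langle N,g\rangle$-fixed points of $X$. This is immediate because $\langle N,g\rangle$ is the full preimage of $\langle gN\rangle$ in $G$.
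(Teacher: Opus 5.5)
Your proof is correct and follows the same route as the paper. Like the paper, you rewrite the Burnside average of Theorem~\ref{thm:core-cumulative} as $\varphi_N^G(u)$ times the average of the linear character $\chi_N^u$, and then apply orthogonality; your Step~1, identifying $\varphi_{\langle gN\rangle}^{G/N}(u^N)$ with $\varphi_{\langle N,g\rangle}^G(u)$ via $\pi^{-1}(\langle gN\rangle)=\langle N,g\rangle$, makes explicit a step the paper leaves implicit.
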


\begin{proof}
By Theorem~\ref{thm:core-cumulative} and the definition of $\chi_N^u$,
\[
s_N(u)
=
\frac{\varphi_N^{G}(u)}{|G/N|}
\sum_{q\in G/N}\chi_N^u(q).
\]
The average of a linear character is $1$ for the trivial character and $0$ for a nontrivial character.
\end{proof}

\begin{cor}[Range of cumulative core coefficients]
\label{cor:sN-range}
For $N\trianglelefteq G$,
\[
\{s_N(v)\mid v\in\B G^\times\}
=
\begin{cases}
\{-1,1\},&\operatorname{Hom}(G/N,C_2)=\{1_{G/N}\},\\
\{-1,0,1\},&\operatorname{Hom}(G/N,C_2)\neq\{1_{G/N}\}.
\end{cases}
\]
In particular, there exists a unit $v$ with $s_N(v)=0$ if and only if there is a normal subgroup of index $2$ containing $N$.
\end{cor}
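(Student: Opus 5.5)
We prove the two inclusions of sets separately, using Theorem~\ref{thm:yoshida-dichotomy} throughout. Since $\pm1_{\B G}$ are units with $s_N(\pm1_{\B G})=\pm1$, both $1$ and $-1$ always occur. Therefore the whole question is when the value $0$ is attained. By Theorem~\ref{thm:yoshida-dichotomy}, $s_N(v)=0$ holds exactly when the Yoshida character $\chi_N^v$ is nontrivial, and $\chi_N^v$ is a homomorphism $G/N\to\{\pm1\}\cong C_2$.

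First suppose $\operatorname{Hom}(G/N,C_2)=\{1_{G/N}\}$. Then $\chi_N^v=1_{G/N}$ for every unit $v$. Hence $s_N(v)=\varphi_N^G(v)\in\{-1,1\}$, and the value $0$ never occurs. This gives the first case.

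Now suppose there is a nontrivial homomorphism $G/N\to C_2$. Its kernel has the form $M/N$ with $M\trianglelefteq G$, $N\leq M$ and $|G:M|=2$. Take the unit $v=u_M=1_{\B G}-[G/M]$ from the generating theorem for $\B{G,\NS}^\times$; it is a unit of $\B G$. We compute its cumulative coefficient directly from~\eqref{eq:sN-basis}. The orbit $[G/G]$ contributes $1$, and $[G/M]$ contributes $-1$ because $N\leq M$. So $s_N(u_M)=0$.

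As a cross-check, the same value can be obtained from marks. The formula for $\varphi_L^{\NS}([G/H])$ gives $\varphi_L^G(u_M)=-1$ if $L\leq M$ and $1$ otherwise. Hence $\chi_N^{u_M}(gN)=-1$ exactly when $g\notin M$, which is a nontrivial character.

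Finally, the existence of a nontrivial homomorphism $G/N\to C_2$ is equivalent to the existence of an index-$2$ normal subgroup of $G$ containing $N$. One direction takes kernels; the other composes $G\to G/M\cong C_2$ with the quotient by $N$. This yields the ``in particular'' statement. No step is a real obstacle; the only point needing care is that $u_M$ lies in $\B G^\times$ and not merely in the partial ring, which holds because the partial Burnside ring is a unital subring of $\B G$.
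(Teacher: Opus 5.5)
Your proposal is correct and follows the paper's proof: necessity comes from Theorem~\ref{thm:yoshida-dichotomy}, the values $\pm1$ come from $\pm1_{\B G}$, and the value $0$ comes from $1_{\B G}-[G/M]$ for an index-$2$ normal subgroup $M\geq N$. The only cosmetic difference is that you read off $s_N(1_{\B G}-[G/M])=0$ directly from the orbit-basis formula~\eqref{eq:sN-basis}, whereas the paper obtains it from $\varphi_N^G=-1$ and the nontrivial Yoshida character, which you also check.
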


\begin{proof}
Necessity follows from Theorem~\ref{thm:yoshida-dichotomy}. If there is a nontrivial homomorphism $\chi:G/N\to\{\pm1\}$, put $K/N=\ker\chi$. Then
\[
w_K:=1-[G/K]\in\B G^\times,
\qquad
\varphi_N^{G}(w_K)=-1,
\qquad
\chi_N^{w_K}=\chi,
\]
so $s_N(w_K)=0$. The values $1$ and $-1$ are realized by $1_{\B G}$ and $-1_{\B G}$, respectively.
\end{proof}

Taking $N=1$, we obtain in particular
\[
\operatorname{coeffsum}_G(u)
=
\begin{cases}
\varphi_1^{G}(u),&\chi_1^u=1_G,\\
0,&\chi_1^u\neq1_G.
\end{cases}
\]

\begin{cor}[M\"obius inversion via Yoshida characters]
\label{cor:yoshida-mobius}
Let $\mu_{\NS}$ denote the M\"obius function of the poset $\NS$ of normal subgroups. For $u\in\B G^\times$ and $N\trianglelefteq G$,
\[
c_N(u)
=
\sum_{\substack{M\trianglelefteq G,\ N\leq M\\\chi_M^u=1_{G/M}}}
\mu_{\NS}(N,M)\varphi_M^{G}(u).
\]
\end{cor}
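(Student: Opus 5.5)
This is a direct combination of Proposition~\ref{prop:mobius} and Theorem~\ref{thm:yoshida-dichotomy}, specialized to $\D=\operatorname{Sub}(G)$, so that $\D\cap\NS=\NS$ and the M\"obius function $\mu$ of Proposition~\ref{prop:mobius} is exactly $\mu_{\NS}$.

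First, apply Proposition~\ref{prop:mobius} to the unit $u\in\B G^\times$ and $N\trianglelefteq G$. This gives
\[
c_N(u)=\sum_{\substack{M\trianglelefteq G\\ N\leq M}}\mu_{\NS}(N,M)\,s_M(u).
\]

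Next, for each normal subgroup $M$ with $N\leq M$, substitute the value of $s_M(u)$ from Theorem~\ref{thm:yoshida-dichotomy}, applied with $M$ in place of $N$. That theorem is legitimate here because $u\in\B G^\times$ and $M\trianglelefteq G$, so $\chi_M^u$ is a homomorphism by Yoshida's criterion. It gives $s_M(u)=\varphi_M^G(u)$ when $\chi_M^u=1_{G/M}$, and $s_M(u)=0$ otherwise.

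Finally, the terms with $\chi_M^u\neq1_{G/M}$ vanish. The remaining sum runs over those $M$ with $\chi_M^u$ trivial and has summands $\mu_{\NS}(N,M)\varphi_M^G(u)$, which is exactly the stated formula.

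There is no real obstacle. The only point to check is that the poset used in Proposition~\ref{prop:mobius} coincides with $\NS$ when $\D=\operatorname{Sub}(G)$, and this is immediate.
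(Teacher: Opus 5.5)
Your proposal is correct and follows the paper's own proof: you substitute Theorem~\ref{thm:yoshida-dichotomy} into Proposition~\ref{prop:mobius} with $\D=\operatorname{Sub}(G)$, and the terms with nontrivial $\chi_M^u$ drop out. Your check that $\D\cap\NS=\NS$ in this case is the only bookkeeping needed.
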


\begin{proof}
Substitute Theorem~\ref{thm:yoshida-dichotomy} into Proposition~\ref{prop:mobius}.
\end{proof}

Theorem~\ref{thm:yoshida-dichotomy} shows that $s_N(u)$ is determined by the pair
\[
\bigl(\varphi_N^{G}(u),\chi_N^u\bigr).
\]
This pair is naturally compatible with multiplication of units. Indeed,
\[
\varphi_N^{G}(uv)=\varphi_N^{G}(u)\varphi_N^{G}(v),
\qquad
\chi_N^{uv}=\chi_N^u\chi_N^v,
\]
so
\[
\Psi_N:\B G^\times
\longrightarrow
\{\pm1\}\times\operatorname{Hom}(G/N,\{\pm1\}),
\qquad
u\longmapsto\bigl(\varphi_N^{G}(u),\chi_N^u\bigr)
\]
is a group homomorphism.

\begin{cor}[Cumulative core coefficients of products]
\label{cor:yoshida-product}
For $u,v\in\B G^\times$ and $N\trianglelefteq G$,
\[
s_N(uv)
=
\begin{cases}
\varphi_N^{G}(u)\varphi_N^{G}(v),&\chi_N^u=\chi_N^v,\\
0,&\chi_N^u\neq\chi_N^v.
\end{cases}
\]
\end{cor}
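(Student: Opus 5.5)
The plan is to apply Theorem~\ref{thm:yoshida-dichotomy} directly to the product unit $uv\in\B G^\times$, and then use the multiplicativity of the pair $(\varphi_N^G,\chi_N)$ noted just before the statement, namely that $\Psi_N$ is a group homomorphism. By Theorem~\ref{thm:yoshida-dichotomy},
\[
s_N(uv)=
\begin{cases}
\varphi_N^{G}(uv),&\chi_N^{uv}=1_{G/N},\\
0,&\chi_N^{uv}\neq1_{G/N}.
\end{cases}
\]
So the only work is to rewrite the condition and the value in terms of $u$ and $v$ separately.

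For the value, I use that $\varphi_N^G$ is a ring homomorphism, which gives $\varphi_N^G(uv)=\varphi_N^G(u)\varphi_N^G(v)$. For the condition, I first check $\chi_N^{uv}=\chi_N^u\chi_N^v$ pointwise. For each $g\in G$,
\[
\chi_N^{uv}(gN)=\frac{\varphi_{\langle N,g\rangle}^G(u)\varphi_{\langle N,g\rangle}^G(v)}{\varphi_N^G(u)\varphi_N^G(v)},
\]
again by multiplicativity of marks, and this splits as the product of the two characters.

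Since the characters take values in $\{\pm1\}$, every element of $\operatorname{Hom}(G/N,\{\pm1\})$ is its own inverse. Hence $\chi_N^u\chi_N^v=1_{G/N}$ holds if and only if $\chi_N^u=(\chi_N^v)^{-1}=\chi_N^v$. Substituting this equivalence and the product formula for $\varphi_N^G(uv)$ into the dichotomy gives the stated formula.

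I do not expect a genuine obstacle. The one point needing care is the self-inverse property of $\{\pm1\}$-valued characters, which is what turns the triviality condition into the equality $\chi_N^u=\chi_N^v$.
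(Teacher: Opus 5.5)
Your proposal is correct and follows essentially the same route as the paper: apply Theorem~\ref{thm:yoshida-dichotomy} to $uv$, use that $\Psi_N$ is a homomorphism (so $\varphi_N^G(uv)=\varphi_N^G(u)\varphi_N^G(v)$ and $\chi_N^{uv}=\chi_N^u\chi_N^v$), and note that $\{\pm1\}$-valued characters are self-inverse, so $\chi_N^{uv}=1_{G/N}$ if and only if $\chi_N^u=\chi_N^v$. Your pointwise check of $\chi_N^{uv}=\chi_N^u\chi_N^v$ simply spells out the multiplicativity that the paper records just before the corollary.
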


\begin{proof}
Since $\Psi_N$ is a group homomorphism and every linear character has order at most $2$, the conditions $\chi_N^{uv}=1_{G/N}$ and $\chi_N^u=\chi_N^v$ are equivalent. Apply Theorem~\ref{thm:yoshida-dichotomy} to $uv$.
\end{proof}

\subsection{A homomorphism refinement of core compression}

Collect the homomorphisms $\Psi_N$ for all $N\trianglelefteq G$ into
\[
\Psi:\B G^\times\longrightarrow
\prod_{N\in\NS}
\bigl(\{\pm1\}\times\operatorname{Hom}(G/N,\{\pm1\})\bigr),
\]
\[
\Psi(u):=\bigl(\Psi_N(u)\bigr)_{N\in\NS}.
\]

For each $N\trianglelefteq G$, define
\[
\operatorname{Av}_N:\{\pm1\}\times\operatorname{Hom}(G/N,\{\pm1\})
\longrightarrow\{-1,0,1\}
\]
by
\[
\operatorname{Av}_N(\varepsilon,\chi)
:=
\frac{\varepsilon}{|G/N|}
\sum_{gN\in G/N}\chi(gN).
\]
If $\chi$ is nontrivial, then its kernel has index $2$ in $G/N$, and therefore
\[
\operatorname{Av}_N(\varepsilon,\chi)
=
\begin{cases}
\varepsilon,&\chi=1_{G/N},\\
0,&\chi\neq1_{G/N}.
\end{cases}
\]
Combining these maps, define
\[
\operatorname{Av}:
\prod_{N\in\NS}
\bigl(\{\pm1\}\times\operatorname{Hom}(G/N,\{\pm1\})\bigr)
\longrightarrow
\prod_{N\in\NS}\{-1,0,1\}
\]
by
\[
\operatorname{Av}\bigl((\varepsilon_N,\chi_N)_{N\in\NS}\bigr)
:=
\bigl(\operatorname{Av}_N(\varepsilon_N,\chi_N)\bigr)_{N\in\NS}.
\]
By Theorem~\ref{thm:yoshida-dichotomy},
\[
\operatorname{Av}(\Psi(u))
=
\bigl(s_N(u)\bigr)_{N\in\NS}.
\]
Thus, on units, core compression factors as
\[
\begin{aligned}
\B G^\times
&\xrightarrow{\ \Psi\ }
\prod_{N\in\NS}
\bigl(\{\pm1\}\times\operatorname{Hom}(G/N,\{\pm1\})\bigr)\\
&\xrightarrow{\ \operatorname{Av}\ }
\prod_{N\in\NS}\{-1,0,1\}
\xrightarrow{\ \text{M\"obius inversion}\ }
\B{G,\NS}.
\end{aligned}
\]
The first map is a group homomorphism. On the other hand, for every nontrivial $\chi$, the map $\operatorname{Av}_N$ sends both
\[
(1,\chi),\quad(-1,\chi)
\]
to the same value $0$. Thus the character information retained by $\Psi_N$ is lost at the averaging stage.

\begin{prop}[Surjectivity of $\Psi_N$]
\label{prop:psi-coordinate-surjective}
For every $N\trianglelefteq G$,
\[
\Psi_N:
\B G^\times
\longrightarrow
\{\pm1\}\times\operatorname{Hom}(G/N,\{\pm1\})
\]
is surjective.
\end{prop}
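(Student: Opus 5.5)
Since $\Psi_N$ is a group homomorphism into an elementary abelian $2$-group, surjectivity follows once the image contains a generating set of the target. We use the $\F_2$-basis consisting of $(-1,1_{G/N})$ together with the elements $(\varepsilon_\chi,\chi)$, where $\chi$ runs over nontrivial homomorphisms $G/N\to\{\pm1\}$ and $\varepsilon_\chi$ is some sign. The elements $(1,\chi)$ with $\chi$ in a basis of $\operatorname{Hom}(G/N,\{\pm1\})$, together with $(-1,1_{G/N})$, already generate. It is therefore enough to hit $(-1,1_{G/N})$ and, for each nontrivial $\chi$, some element of the form $(\pm1,\chi)$; multiplying by $\Psi_N(-1_{\B G})$ then fixes the sign.

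The first generator comes from $-1_{\B G}$. Its marks are all $-1$, so $\varphi_N^G(-1_{\B G})=-1$. The quotient $\varphi_{\langle N,g\rangle}^G/\varphi_N^G$ is identically $1$, so $\chi_N^{-1}=1_{G/N}$. Hence $\Psi_N(-1_{\B G})=(-1,1_{G/N})$.

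For a nontrivial $\chi:G/N\to\{\pm1\}$, we reuse the construction in the proof of Corollary~\ref{cor:sN-range}. Let $K/N=\ker\chi$, so $K\trianglelefteq G$ has index $2$ and contains $N$, and put $w_K=1_{\B G}-[G/K]$. This is a unit, being one of the generators $u_K$ of $\B{G,\NS}^\times$. The mark formula for normal subgroups gives $\varphi_L^G(w_K)=1-2=-1$ if $L\le K$ and $1$ otherwise. Thus $\varphi_N^G(w_K)=-1$. For $g\in G$ we have $\langle N,g\rangle\le K$ if and only if $g\in K$, so
\[
\chi_N^{w_K}(gN)=\frac{\varphi^G_{\langle N,g\rangle}(w_K)}{-1}
\]
equals $1$ when $gN\in K/N$ and $-1$ otherwise, i.e.\ $\chi_N^{w_K}=\chi$. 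Therefore $\Psi_N(w_K)=(-1,\chi)$, and $\Psi_N(-w_K)=(1,\chi)$.

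Combining the two steps, the image of $\Psi_N$ contains $(\pm1,1_{G/N})$ and $(\pm1,\chi)$ for every $\chi$, so it is the whole target. In fact every element is hit directly, since any target element has the form $(\varepsilon,\chi)$ and we have exhibited a preimage in each case without needing products. The only point needing care is the mark computation of $w_K$ at $\langle N,g\rangle$, and that is immediate from the normal-subgroup mark formula; I expect no real obstacle.
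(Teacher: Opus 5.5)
Your proof is correct and is essentially the paper's argument. You realize $(\pm1,1_{G/N})$ by $\pm1_{\B G}$, and for nontrivial $\chi$ with $K/N=\ker\chi$ you realize $(\mp1,\chi)$ by $\pm(1_{\B G}-[G/K])$, using the same mark computation. The opening reduction to generators is unnecessary, since, as you note at the end, every target element is hit directly. Also, the set you call an $\F_2$-basis is only a spanning set once $G/N$ has more than one nontrivial character.
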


\begin{proof}
The elements $(1,1_{G/N})$ and $(-1,1_{G/N})$ are realized by $1_{\B G}$ and $-1_{\B G}$, respectively. For a nontrivial homomorphism $\chi:G/N\to\{\pm1\}$, put $K/N=\ker\chi$. Then
\[
\Psi_N(1-[G/K])=(-1,\chi),
\qquad
\Psi_N(-1+[G/K])=(1,\chi).
\]
\end{proof}

\begin{cor}[A bound on the image of core compression]
\label{cor:core-image-bound}
Put
\[
\operatorname{NS}_{\mathrm{triv}}(G)
:=
\{N\trianglelefteq G\mid\operatorname{Hom}(G/N,C_2)=\{1_{G/N}\}\},
\]
\[
\operatorname{NS}_{\mathrm{nontriv}}(G)
:=
\{N\trianglelefteq G\mid\operatorname{Hom}(G/N,C_2)\neq\{1_{G/N}\}\}.
\]
Then
\[
\#\mathfrak n_G(\B G^\times)
\leq
2^{\#\operatorname{NS}_{\mathrm{triv}}(G)}
3^{\#\operatorname{NS}_{\mathrm{nontriv}}(G)}.
\]
\end{cor}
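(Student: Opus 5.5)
The bound follows from the factorization of core compression on units established just above. By the definition of $\mathfrak n_G$ and Proposition~\ref{prop:mobius}, the element $\mathfrak n_G(u)=\sum_{N}c_N(u)[G/N]$ is determined by the family $(c_N(u))_{N\in\NS}$. That family is in turn determined by the family of cumulative coefficients $(s_N(u))_{N\in\NS}$ through M\"obius inversion on the finite poset $\NS$. Hence
\[
\#\mathfrak n_G(\B G^\times)\leq\#\bigl\{(s_N(u))_{N\in\NS}\ \big|\ u\in\B G^\times\bigr\}.
\]
Conversely, the orbit basis shows that $\mathfrak n_G(u)$ determines $(c_N(u))_N$, although only the inequality is needed here.

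Next I will bound the number of possible tuples $(s_N(u))_N$ coordinatewise, which gives the bound
\[
\prod_{N\in\NS}\#\{s_N(v)\mid v\in\B G^\times\}.
\]
Corollary~\ref{cor:sN-range} computes each factor exactly:
\begin{itemize}
\item for $N\in\operatorname{NS}_{\mathrm{triv}}(G)$ the factor is $\#\{-1,1\}=2$;
\item for $N\in\operatorname{NS}_{\mathrm{nontriv}}(G)$ the factor is $\#\{-1,0,1\}=3$.
\end{itemize}
Alternatively, one can argue through $\operatorname{Av}\circ\Psi$. When $\chi_N^u$ is forced to be trivial, $s_N(u)=\varphi_N^G(u)\in\{\pm1\}$ by Theorem~\ref{thm:yoshida-dichotomy}, and in all cases $s_N(u)\in\{-1,0,1\}$ by Theorem~\ref{thm:core-cumulative}. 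Multiplying the factors gives $2^{\#\operatorname{NS}_{\mathrm{triv}}(G)}3^{\#\operatorname{NS}_{\mathrm{nontriv}}(G)}$.

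There is no real obstacle. The one point to state carefully is that $\mathfrak n_G(u)$ is a function of the tuple $(s_N(u))_N$, that is, that the composite displayed before Proposition~\ref{prop:psi-coordinate-surjective} genuinely equals $\mathfrak n_G$ on units. This holds because M\"obius inversion recovers each $c_N$ from the $s_M$, and the $c_N$ are by definition the coefficients of $\mathfrak n_G(u)$. For $N\in\operatorname{NS}_{\mathrm{triv}}(G)$ I will also note that the restriction to $\{\pm1\}$ uses only that $\operatorname{Hom}(G/N,\{\pm1\})$ is trivial, so that $\chi_N^u=1_{G/N}$ automatically.
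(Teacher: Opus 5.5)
Your proposal is correct and follows the paper's proof: Corollary~\ref{cor:sN-range} restricts each coordinate $s_N(u)$ to $\{\pm1\}$ or $\{-1,0,1\}$, and since M\"obius inversion is bijective, $\mathfrak n_G(u)$ is determined by the tuple $(s_N(u))_{N\in\NS}$. Multiplying the number of possible values in each coordinate then gives the bound.
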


\begin{proof}
By Corollary~\ref{cor:sN-range}, the cumulative core coefficient $s_N(u)$ belongs to $\{\pm1\}$ if $N\in\operatorname{NS}_{\mathrm{triv}}(G)$ and to $\{-1,0,1\}$ if $N\in\operatorname{NS}_{\mathrm{nontriv}}(G)$. Since M\"obius inversion is bijective, multiplying the number of possibilities in each coordinate gives the result.
\end{proof}

\begin{cor}[Yoshida characters and the parity of core coefficients]
\label{cor:no-c2-parity}
For every $u\in\B G^\times$ and $N\trianglelefteq G$,
\[
c_N(u)
\equiv
\sum_{\substack{M\geq N\\\chi_M^u=1_{G/M}}}
\mu_{\NS}(N,M)
\pmod2.
\]
Thus the parity of a core coefficient is determined solely by the pattern of trivial and nontrivial Yoshida characters.

In particular, if $N<G$ and $\operatorname{Hom}(G/N,C_2)=\{1_{G/N}\}$, then
\[
c_N(u)\in2\Z.
\]
\end{cor}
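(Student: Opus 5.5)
We reduce everything modulo $2$ and start from Corollary~\ref{cor:yoshida-mobius}, which expresses $c_N(u)$ as
\[
c_N(u)=\sum_{\substack{M\trianglelefteq G,\ N\leq M\\\chi_M^u=1_{G/M}}}\mu_{\NS}(N,M)\varphi_M^{G}(u).
\]
Since $u$ is a unit, every mark $\varphi_M^G(u)$ lies in $\{\pm1\}$, so $\varphi_M^G(u)\equiv1\pmod 2$. Replacing each mark by $1$ therefore gives
\[
c_N(u)\equiv\sum_{\substack{M\geq N\\\chi_M^u=1_{G/M}}}\mu_{\NS}(N,M)\pmod2,
\]
which is the first assertion. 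The right-hand side involves only the set of $M\geq N$ with trivial Yoshida character, so the parity depends only on that pattern.

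For the second assertion, assume $N<G$ and $\operatorname{Hom}(G/N,C_2)=\{1_{G/N}\}$. The key step is that the hypothesis passes to every normal $M\geq N$. Any homomorphism $G/M\to C_2$ composed with the projection $G/N\to G/M$ gives a homomorphism $G/N\to C_2$, which must be trivial. Since the projection is surjective, $\operatorname{Hom}(G/M,C_2)$ is trivial as well.

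It follows that $\chi_M^u=1_{G/M}$ for every normal $M\geq N$, so the congruence becomes
\[
c_N(u)\equiv\sum_{N\leq M\trianglelefteq G}\mu_{\NS}(N,M)\pmod 2.
\]
This is the full Möbius sum over the interval $[N,G]$ in the finite poset $\NS$. Because $N<G$, the defining property of the Möbius function makes this sum zero. Hence $c_N(u)\in2\Z$.

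None of these steps is a serious obstacle. The only point needing care is the passage of the hypothesis to quotients $G/M$ with $M\geq N$, and the inflation argument above handles it.
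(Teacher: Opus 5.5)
Your proposal is correct and follows the paper's proof essentially step for step. You reduce the marks in Corollary~\ref{cor:yoshida-mobius} modulo $2$, pass the triviality of $\operatorname{Hom}(-,C_2)$ from $G/N$ to each quotient $G/M$ through the surjection $G/N\to G/M$, and conclude with the vanishing of the full M\"obius sum over the interval from $N$ to $G$ when $N<G$.
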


\begin{proof}
Reducing each mark in Corollary~\ref{cor:yoshida-mobius} modulo $2$ gives the first congruence.

Now assume $\operatorname{Hom}(G/N,C_2)=\{1_{G/N}\}$. If $M\geq N$, the natural surjection $G/N\to G/M$ induces an injection
\[
\operatorname{Hom}(G/M,C_2)
\hookrightarrow
\operatorname{Hom}(G/N,C_2).
\]
Hence $\operatorname{Hom}(G/M,C_2)=\{1_{G/M}\}$, so $\chi_M^u=1_{G/M}$. Therefore, for $N<G$,
\[
c_N(u)
\equiv
\sum_{M\geq N}\mu_{\NS}(N,M)
=
0
\pmod2.
\]
\end{proof}

\section{Applications to the normal partial Burnside ring}

When it is necessary to specify the ambient group in which core coefficients are taken, we write
\[
c_N^G(x),\qquad s_N^G(x).
\]
When the group is clear from the context, we continue to write simply
\[
c_N(x),\qquad s_N(x).
\]

\subsection{Reduction of the core-coefficient problem to normal quotients}

Let $N\trianglelefteq G$. Viewing a $G/N$-set as a $G$-set through the quotient map $G\to G/N$ gives the inflation homomorphism
\[
\operatorname{Inf}_{G/N}^{G}:\B{G/N}\longrightarrow\B G.
\]
It is a unital ring homomorphism, and on the orbit basis
\[
\operatorname{Inf}_{G/N}^{G}
\bigl([(G/N)/(H/N)]\bigr)
=[G/H]
\qquad (N\leq H\leq G).
\]
For the biset-functor structure of the Burnside functor, including inflation, see for example~\cite{Bouc2010}.

The fixed-point map $(-)^N$ defined in Section~3 satisfies
\[
(-)^N\circ\operatorname{Inf}_{G/N}^{G}
=\operatorname{id}_{\B{G/N}}.
\]
Hence its restriction to unit groups,
\[
(-)^N:\B G^\times\longrightarrow\B{G/N}^\times,
\]
is surjective.

For finite groups $G_1,G_2$, the map obtained by extending linearly the correspondence
\[
[G_1/H_1]\otimes [G_2/H_2]
\longmapsto
[(G_1\times G_2)/(H_1\times H_2)]
\]
defines a ring homomorphism
\[
\operatorname{Prod}:\B{G_1}\otimes_{\mathbb Z}\B{G_2}
\longrightarrow\B{G_1\times G_2}.
\]

\begin{prop}[Compatibility of inflation and core compression]\label{prop:inflation-core}
Let $N\trianglelefteq G$ and $x\in\B{G/N}$. Then
\[
\mathfrak n_G\left(\operatorname{Inf}_{G/N}^{G}(x)\right)
=\operatorname{Inf}_{G/N}^{G}\left(\mathfrak n_{G/N}(x)\right).
\]
More explicitly, for every $M\trianglelefteq G$,
\[
c_M^G\left(\operatorname{Inf}_{G/N}^{G}(x)\right)=
\begin{cases}
c_{M/N}^{G/N}(x),&N\leq M,\\[4pt]
0,&N\nleq M.
\end{cases}
\]
\end{prop}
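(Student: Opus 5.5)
Both sides of the identity are additive in $x$, and $\B{G/N}$ is free on the orbit basis. So it suffices to check the identity on a single basis element $x=[(G/N)/(H/N)]$ with $N\leq H\leq G$.

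For such $x$ we have $\operatorname{Inf}_{G/N}^{G}(x)=[G/H]$, hence
\[
\mathfrak n_G\bigl(\operatorname{Inf}_{G/N}^{G}(x)\bigr)=[G/\operatorname{core}_G(H)].
\]
On the other side, $\mathfrak n_{G/N}(x)=[(G/N)/\operatorname{core}_{G/N}(H/N)]$. The key input is the identity
\[
\operatorname{core}_{G/N}(H/N)=\operatorname{core}_G(H)/N
\qquad(N\leq H),
\]
already recorded in the proof of Proposition~\ref{prop:quotient-compatibility}. It holds because conjugation commutes with the quotient map, and intersections of subgroups containing $N$ pass to the quotient. By \eqref{eq:normal-core-containment}, $N\leq\operatorname{core}_G(H)$, so inflating gives $\operatorname{Inf}_{G/N}^{G}\bigl([(G/N)/(\operatorname{core}_G(H)/N)]\bigr)=[G/\operatorname{core}_G(H)]$, which matches the left side. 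Additivity then yields the first identity.

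For the explicit formula, expand $\mathfrak n_{G/N}(x)=\sum_{M/N}c^{G/N}_{M/N}(x)[(G/N)/(M/N)]$. Here $M/N$ runs over the normal subgroups of $G/N$, and these correspond bijectively to the normal subgroups $M\trianglelefteq G$ with $N\leq M$. Inflating gives
\[
\operatorname{Inf}_{G/N}^{G}\bigl(\mathfrak n_{G/N}(x)\bigr)=\sum_{N\leq M\trianglelefteq G}c^{G/N}_{M/N}(x)[G/M],
\]
which is already written in the orbit basis of $\B{G,\NS}$. By the first identity this equals $\mathfrak n_G(\operatorname{Inf}_{G/N}^{G}(x))=\sum_{M\trianglelefteq G}c^G_M(\operatorname{Inf}_{G/N}^{G}(x))[G/M]$. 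Comparing coefficients, using that the $[G/M]$ are linearly independent, gives $c^G_M=c^{G/N}_{M/N}$ when $N\leq M$ and $c^G_M=0$ when $N\nleq M$.

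There is no real obstacle. The only point needing care is the core identity together with the observation that cores of subgroups containing $N$ again contain $N$, so that every term on the inflated side is a genuine orbit $[G/M]$ with $N\leq M$.
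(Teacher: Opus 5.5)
Your proof is correct and rests on the same key fact as the paper's proof: for $N\leq H$ one has $N\leq\operatorname{core}_G(H)$ and $\operatorname{core}_{G/N}(H/N)=\operatorname{core}_G(H)/N$. The only difference is the order of the steps. The paper matches coefficients directly via $\operatorname{core}_G(H)=M\iff\operatorname{core}_{G/N}(H/N)=M/N$ and lets the identity follow, whereas you verify the identity on orbit-basis elements first and then read off the coefficients.
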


\begin{proof}
On the orbit basis,
\[
\operatorname{Inf}_{G/N}^{G}
\bigl([(G/N)/(H/N)]\bigr)
=[G/H]
\qquad(N\leq H\leq G).
\]
Since $N\trianglelefteq G$ and $N\leq H$, we have $N\leq\operatorname{core}_G(H)$ and
\[
\operatorname{core}_{G/N}(H/N)=\operatorname{core}_G(H)/N.
\]
Thus, for $M\trianglelefteq G$ with $N\leq M$,
\[
\operatorname{core}_G(H)=M
\quad\Longleftrightarrow\quad
\operatorname{core}_{G/N}(H/N)=M/N.
\]
Therefore
\[
c_M^G\left(\operatorname{Inf}_{G/N}^{G}(x)\right)
=c_{M/N}^{G/N}(x).
\]
On the other hand, every orbit $[G/H]$ occurring in an inflated element satisfies $N\leq\operatorname{core}_G(H)$, so if $N\nleq M$, no orbit with core $M$ occurs. Hence
\[
c_M^G\left(\operatorname{Inf}_{G/N}^{G}(x)\right)=0.
\]
This proves the assertion.
\end{proof}

\begin{cor}[Reduction of core coefficients to normal quotients]\label{cor:quotient-value-set}
If $N\trianglelefteq G$, then
\[
\{c_N^G(u)\mid u\in\B G^\times\}
=
\{c_1^{G/N}(v)\mid v\in\B{G/N}^\times\}.
\]
In particular,
\[
c_N^G(u)=0
\qquad
\text{for every }u\in\B G^\times
\]
if and only if
\[
c_1^{G/N}(v)=0
\qquad
\text{for every }v\in\B{G/N}^\times.
\]
\end{cor}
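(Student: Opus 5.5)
We prove the two inclusions separately: the fixed-point map $(-)^N$ gives one direction and inflation gives the other. In both directions we use Proposition~\ref{prop:quotient-compatibility} and Proposition~\ref{prop:inflation-core} with $M=N$.

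For ``$\subseteq$'', let $u\in\B G^\times$ and take $\D=\operatorname{Sub}(G)$. Then $\D_{\geq N}/N=\operatorname{Sub}(G/N)$, so $u^N\in\B{G/N}$. Since $(-)^N$ is a unital ring homomorphism, $u^N$ is a unit of $\B{G/N}$. Applying Proposition~\ref{prop:quotient-compatibility} with $M=N$ gives
\[
c_N^G(u)=c_{N/N}^{G/N}(u^N)=c_1^{G/N}(u^N),
\]
so $c_N^G(u)$ lies in the right-hand set.

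For ``$\supseteq$'', let $v\in\B{G/N}^\times$ and put $u:=\operatorname{Inf}_{G/N}^G(v)$. Inflation is a unital ring homomorphism, so $u\in\B G^\times$. Proposition~\ref{prop:inflation-core} with $M=N$, where $N\leq N$ holds trivially, gives
\[
c_N^G(u)=c_{N/N}^{G/N}(v)=c_1^{G/N}(v),
\]
so $c_1^{G/N}(v)$ lies in the left-hand set. Alternatively, one can use the surjectivity of $(-)^N$ on unit groups together with the first direction.

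The ``in particular'' statement follows at once, because one set equals $\{0\}$ exactly when the other does. No step here is a genuine obstacle. The only point to check is that the fixed-point target ring is the full Burnside ring of $G/N$ when $\D=\operatorname{Sub}(G)$, since every subgroup of $G/N$ has the form $H/N$ with $N\leq H$.
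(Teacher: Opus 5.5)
Your proof is correct and is essentially the paper's argument. The inclusion $\subseteq$ is exactly the paper's appeal to Proposition~\ref{prop:quotient-compatibility}. For $\supseteq$, the paper uses surjectivity of $(-)^N$ on unit groups, with inflation as a right inverse; this is your stated alternative, and it is equivalent to your direct use of Proposition~\ref{prop:inflation-core} with $M=N$, since $(\operatorname{Inf}_{G/N}^G v)^N=v$.
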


\begin{proof}
By Proposition~\ref{prop:quotient-compatibility}, $c_N^G(u)=c_1^{G/N}(u^N)$. The fixed-point map is surjective on unit groups because inflation is a right inverse.
\end{proof}

\begin{prop}[Compatibility of direct products and core compression]\label{prop:external-product-core}
Let $G_1,G_2$ be finite groups and $x_i\in\B{G_i}$. Then
\[
\mathfrak n_{G_1\times G_2}\bigl(\operatorname{Prod}(x_1\otimes x_2)\bigr)
=\operatorname{Prod}\bigl(\mathfrak n_{G_1}(x_1)\otimes \mathfrak n_{G_2}(x_2)\bigr).
\]
Consequently, for $N_i\trianglelefteq G_i$,
\[
c_{N_1\times N_2}\bigl(\operatorname{Prod}(x_1\otimes x_2)\bigr)
=c_{N_1}(x_1)c_{N_2}(x_2),
\]
and
\[
s_{N_1\times N_2}\bigl(\operatorname{Prod}(x_1\otimes x_2)\bigr)
=s_{N_1}(x_1)s_{N_2}(x_2).
\]
\end{prop}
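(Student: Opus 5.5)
Since both sides of the first identity are biadditive in $(x_1,x_2)$, and $\operatorname{Prod}$, $\mathfrak n_{G_1}$, $\mathfrak n_{G_2}$ and $\mathfrak n_{G_1\times G_2}$ are additive, it suffices to treat orbit-basis elements $x_i=[G_i/H_i]$ with $H_i\leq G_i$. Then $\operatorname{Prod}(x_1\otimes x_2)=[(G_1\times G_2)/(H_1\times H_2)]$, and the first identity reduces to the group-theoretic statement
\[
\operatorname{core}_{G_1\times G_2}(H_1\times H_2)
=\operatorname{core}_{G_1}(H_1)\times\operatorname{core}_{G_2}(H_2).
\]
To prove this, use that conjugation in $G_1\times G_2$ acts coordinatewise: ${}^{(g_1,g_2)}(H_1\times H_2)={}^{g_1}H_1\times{}^{g_2}H_2$. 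Intersecting over all pairs $(g_1,g_2)$ gives
\[
\bigcap_{g_1,g_2}\bigl({}^{g_1}H_1\times{}^{g_2}H_2\bigr)
=\Bigl(\bigcap_{g_1}{}^{g_1}H_1\Bigr)\times\Bigl(\bigcap_{g_2}{}^{g_2}H_2\Bigr),
\]
since intersections of product sets are computed coordinatewise. Hence both sides agree on basis tensors, and the first identity follows by linearity.

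For the coefficient formula, write $\mathfrak n_{G_i}(x_i)=\sum_{N_i}c_{N_i}(x_i)[G_i/N_i]$. Applying $\operatorname{Prod}$ to the tensor product of these expansions gives
\[
\sum_{N_1,N_2}c_{N_1}(x_1)c_{N_2}(x_2)\,[(G_1\times G_2)/(N_1\times N_2)].
\]
The subgroups $N_1\times N_2$ are normal in $G_1\times G_2$, and distinct pairs $(N_1,N_2)$ give distinct subgroups, so these are distinct orbit-basis elements. Comparing coefficients in the normal orbit basis of $\B{G_1\times G_2,\operatorname{NS}(G_1\times G_2)}$ then yields $c_{N_1\times N_2}=c_{N_1}c_{N_2}$. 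The same comparison shows that $c_M(\operatorname{Prod}(x_1\otimes x_2))=0$ for every normal subgroup $M$ that is not a product of normal subgroups of the factors. This matters because $G_1\times G_2$ may have such normal subgroups, for example diagonal ones.

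For the cumulative identity, the plan is to avoid summing over all normal $M\geq N_1\times N_2$ directly. Instead use \eqref{eq:sN-basis}: $s_N(x)$ is the sum of the orbit coefficients $a_H$ over those $H$ with $N\leq H$. By biadditivity, again reduce to $x_i=[G_i/H_i]$. Then $N_1\times N_2\leq H_1\times H_2$ holds if and only if $N_1\leq H_1$ and $N_2\leq H_2$, so the indicator for the product is the product of the indicators, and $s_{N_1\times N_2}=s_{N_1}s_{N_2}$ on basis elements. Bilinearity then gives the general case. The only subtle point is this last step, where non-product normal subgroups above $N_1\times N_2$ would complicate a direct summation of core coefficients; the fixed-point/basis description \eqref{eq:sN-basis} avoids that issue entirely.
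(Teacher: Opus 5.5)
Your proof is correct and follows the paper's own argument: the coordinatewise identity $\operatorname{core}_{G_1\times G_2}(H_1\times H_2)=\operatorname{core}_{G_1}(H_1)\times\operatorname{core}_{G_2}(H_2)$, expansion in the normal orbit basis for the core coefficients, and, for the cumulative coefficients, the orbit-basis (fixed-point) description \eqref{eq:sN-basis}, which is one of the two routes the paper names. The paper's other route, the zeta transform over normal $M\geq N_1\times N_2$, is less delicate than you suggest. You have already shown that $c_M\bigl(\operatorname{Prod}(x_1\otimes x_2)\bigr)=0$ at every non-product normal $M$, and with that the sum factors directly.
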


\begin{proof}
Use
\[
\operatorname{core}_{G_1\times G_2}(H_1\times H_2)
=\operatorname{core}_{G_1}(H_1)\times\operatorname{core}_{G_2}(H_2).
\]
The product formula for core coefficients follows by expansion. The corresponding formula for cumulative coefficients follows either from the zeta transform or from the direct-product formula for fixed-point sets.
\end{proof}

\begin{rem}
In particular, whether $c_N^G(u)=0$ for every unit $u\in\B G^\times$ depends only on the isomorphism type of the quotient group $G/N$.
\end{rem}

\subsection{Core-trivial units}

\begin{dfn}[Core-cyclic subgroup]
A subgroup $H\leq G$ is called \emph{core-cyclic} if
\[
H/\operatorname{core}_G(H)
\]
is cyclic.
\end{dfn}

\begin{thm}[The group of core-trivial units]\label{thm:core-trivial-subgroup}
Put
\[
\mathcal U_{\mathrm{core}}(G)
:=\{u\in\B G^\times\mid \mathfrak n_G(u)=1_{\B G}\},
\]
and call its elements \emph{core-trivial units}. Then
\[
\mathcal U_{\mathrm{core}}(G)
=\ker\Psi
=\left\{u\in\B G^\times\ \middle|\
\varphi_H^{G}(u)=1\quad\text{for every core-cyclic subgroup }H\leq G\right\}.
\]
\end{thm}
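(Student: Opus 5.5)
We prove the two equalities in turn: first $\mathcal U_{\mathrm{core}}(G)=\ker\Psi$, and then that $\ker\Psi$ is the set of units with trivial marks at all core-cyclic subgroups.

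\emph{Step 1: $\mathcal U_{\mathrm{core}}(G)=\ker\Psi$.} Since $\mathfrak n_G(u)=\sum_N c_N(u)[G/N]$ and the orbit elements $[G/N]$ are linearly independent, we have $\mathfrak n_G(u)=1_{\B G}$ if and only if $c_N(u)=\delta_{N,G}$ for every $N$. By Proposition~\ref{prop:mobius}, this is equivalent to $s_N(u)=1$ for every $N\trianglelefteq G$. By Theorem~\ref{thm:yoshida-dichotomy}, $s_N(u)=1$ holds exactly when $\chi_N^u=1_{G/N}$ and $\varphi_N^G(u)=1$. Indeed, if $\chi_N^u$ is nontrivial then $s_N(u)=0$, and otherwise $s_N(u)=\varphi_N^G(u)$. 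This condition says precisely that $\Psi_N(u)=(1,1_{G/N})$. Requiring it for all $N$ gives $\mathcal U_{\mathrm{core}}(G)=\ker\Psi$.

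\emph{Step 2: $\ker\Psi$ is the set described by core-cyclic subgroups.} Take $u\in\ker\Psi$ and a core-cyclic subgroup $H$, and put $N=\operatorname{core}_G(H)$. Since $H/N$ is cyclic, we can write $H=\langle N,g\rangle$ for some $g\in H$. Then
\[
\varphi_H^G(u)=\chi_N^u(gN)\,\varphi_N^G(u)=1\cdot 1=1 .
\]

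Conversely, suppose $\varphi_H^G(u)=1$ for every core-cyclic $H$. Fix $N\trianglelefteq G$; then $N$ is core-cyclic, since $N/N$ is trivial, so $\varphi_N^G(u)=1$. For any $g\in G$, the subgroup $H=\langle N,g\rangle$ contains the normal subgroup $N$, so its core $C$ satisfies $N\le C\le H$. Hence $H/C$ is a quotient of the cyclic group $H/N$, and $H$ is core-cyclic. Therefore $\varphi_{\langle N,g\rangle}^G(u)=1$, which gives $\chi_N^u(gN)=1$. Since $g$ was arbitrary, $\chi_N^u$ is trivial and $u\in\ker\Psi$.

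The only step needing some care is the observation in Step 2 that every core-cyclic $H$ has the form $\langle N,g\rangle$ with $N$ equal to its core. Conversely, every subgroup of the form $\langle N,g\rangle$ with $N$ normal is core-cyclic, even though its core may be strictly larger than $N$. Everything else reduces to Theorem~\ref{thm:yoshida-dichotomy} together with M\"obius inversion.
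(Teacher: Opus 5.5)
Your proof is correct and takes essentially the same route as the paper. You use M\"obius inversion together with Theorem~\ref{thm:yoshida-dichotomy} to identify $\mathcal U_{\mathrm{core}}(G)$ with $\ker\Psi$, write each core-cyclic $H$ as $\langle\operatorname{core}_G(H),g\rangle$, and for the converse note that $N$ and $\langle N,g\rangle$ are core-cyclic; you also supply the short justification that $\langle N,g\rangle/\operatorname{core}_G(\langle N,g\rangle)$ is a quotient of the cyclic group $\langle N,g\rangle/N$, which the paper only asserts.
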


\begin{proof}
By the invertibility of M\"obius inversion and Theorem~\ref{thm:yoshida-dichotomy},
\[
\mathfrak n_G(u)=1_{\B G}
\Longleftrightarrow
s_N(u)=1\ \text{for all }N\trianglelefteq G
\Longleftrightarrow
\varphi_N^{G}(u)=1,\ \chi_N^u=1_{G/N}\ \text{for all }N\trianglelefteq G.
\]
Thus the first two sets coincide.

Suppose that $H$ is core-cyclic and put $N=\operatorname{core}_G(H)$. Then $H=\langle N,g\rangle$ for some $g\in H$, and hence $u\in\ker\Psi$ implies $\varphi_H^G(u)=1$. Conversely, suppose all marks at core-cyclic subgroups are equal to $1$. Every normal subgroup $N$ and every subgroup $\langle N,g\rangle$ are core-cyclic, so $\varphi_N^G(u)=1$ and $\chi_N^u=1_{G/N}$ for every $N\trianglelefteq G$.
\end{proof}

\begin{cor}[Units whose core compression is $-1_{\B G}$]
The set $\mathcal U_{\mathrm{core}}(G)$ is a subgroup of $\B G^\times$, and
\[
\{u\in\B G^\times\mid \mathfrak n_G(u)=-1_{\B G}\}
=-\mathcal U_{\mathrm{core}}(G).
\]
\end{cor}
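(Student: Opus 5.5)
The first claim follows from Theorem~\ref{thm:core-trivial-subgroup}, which gives
\[
\mathcal U_{\mathrm{core}}(G)=\ker\Psi .
\]
As observed in Section~4, $\Psi$ is a group homomorphism on $\B G^\times$, because both the marks $\varphi_N^G$ and the Yoshida characters $\chi_N^u$ are multiplicative in $u$. Its kernel is therefore a subgroup. Alternatively, one can read this off from the mark description in the same theorem: the marks are ring homomorphisms, so the set of units with $\varphi_H^G(u)=1$ for all core-cyclic $H$ is closed under products, contains $1_{\B G}$, and is closed under inverses since $u^2=1_{\B G}$.

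For the second claim, we compare $\mathfrak n_G(-u)$ with $\mathfrak n_G(u)$. The core compression $\mathfrak n_G$ is additive, hence $\Z$-linear, so
\[
\mathfrak n_G(-u)=-\mathfrak n_G(u).
\]
Now take $u\in\B G^\times$ with $\mathfrak n_G(u)=-1_{\B G}$. Then $-u$ is a unit and $\mathfrak n_G(-u)=1_{\B G}$, so $-u\in\mathcal U_{\mathrm{core}}(G)$, and hence $u\in-\mathcal U_{\mathrm{core}}(G)$. Conversely, if $v\in\mathcal U_{\mathrm{core}}(G)$, then $-v$ is a unit and $\mathfrak n_G(-v)=-1_{\B G}$. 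This gives both inclusions.

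No step here is a real obstacle. The only point to keep straight is that $-1_{\B G}$ is the negative of $[G/G]$ in the orbit basis, so $\mathfrak n_G(-1_{\B G})=-1_{\B G}$. Linearity is what lets the sign pass through $\mathfrak n_G$; multiplicativity of $\mathfrak n_G$ is never needed.
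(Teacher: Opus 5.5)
Your proof is correct and follows the paper's argument: the subgroup property comes from $\mathcal U_{\mathrm{core}}(G)=\ker\Psi$ in Theorem~\ref{thm:core-trivial-subgroup}, and the second claim follows from additivity, $\mathfrak n_G(-u)=-\mathfrak n_G(u)$. Your alternative argument through the mark description is also valid, though it is not needed.
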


\begin{proof}
The subgroup assertion follows from the description $\mathcal U_{\mathrm{core}}(G)=\ker\Psi$ in Theorem~\ref{thm:core-trivial-subgroup}. Since $\mathfrak n_G(-u)=-\mathfrak n_G(u)$, the units with core compression $-1_{\B G}$ are precisely the elements of $-\mathcal U_{\mathrm{core}}(G)$.
\end{proof}

Let $R_{\mathbb Q}(G)$ denote the representation ring of finite-dimensional rational representations of $G$.
For a finite $G$-set $X$, let $\mathbb QX$ be the rational permutation module with basis $X$. This assignment induces the unital ring homomorphism
\[
\ell_G:\B G\longrightarrow R_{\mathbb Q}(G),
\qquad
[X]\longmapsto[\mathbb QX],
\]
called the \emph{linearization map}. For the linearization of the Burnside functor, see for example~\cite{Bouc2010}.

For every $x\in\B G$ and $g\in G$, the character of the linearized representation satisfies
\[
\chi_{\ell_G(x)}(g)
=\varphi_{\langle g\rangle}^{G}(x).
\]
Indeed, for a finite $G$-set $X$, the value at $g$ of its permutation character is $\#X^{\langle g\rangle}$.

\begin{cor}[Linearization and core-trivial units]\label{cor:linearization-kernel}
\[
\mathcal U_{\mathrm{core}}(G)
\subseteq
\left\{u\in\B G^\times\ \middle|\
\ell_G(u)=1_{R_{\mathbb Q}(G)}\right\}.
\]
\end{cor}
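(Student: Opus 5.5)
Let $u\in\mathcal U_{\mathrm{core}}(G)$. We will show that $\ell_G(u)=1_{R_{\mathbb Q}(G)}$ by comparing characters. Rational representations are determined up to isomorphism by their characters, since the characters of irreducible complex representations are linearly independent. So in $R_{\mathbb Q}(G)$ it suffices to check that the virtual character $\chi_{\ell_G(u)}$ equals the trivial character, that is,
\[
\chi_{\ell_G(u)}(g)=1\qquad\text{for every }g\in G.
\]

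By the identity recalled just before the statement, $\chi_{\ell_G(u)}(g)=\varphi_{\langle g\rangle}^G(u)$. The one point to check is that every cyclic subgroup $\langle g\rangle$ is core-cyclic. Put $N=\operatorname{core}_G(\langle g\rangle)$. Then $\langle g\rangle/N$ is a quotient of a cyclic group, hence cyclic, so $\langle g\rangle$ is core-cyclic. Theorem~\ref{thm:core-trivial-subgroup} then gives $\varphi_{\langle g\rangle}^G(u)=1$ for all $g\in G$.

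Therefore $\chi_{\ell_G(u)}$ is identically $1$, which is the character of $\ell_G(1_{\B G})=[\mathbb Q]$. Hence $\ell_G(u)=1_{R_{\mathbb Q}(G)}$, proving the inclusion.

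None of these steps is difficult. The only point needing justification is that $R_{\mathbb Q}(G)$ embeds into the ring of class functions via characters. This is standard: an element of $R_{\mathbb Q}(G)$ is a difference of two rational representations, and two representations over a field of characteristic $0$ with equal characters are isomorphic.
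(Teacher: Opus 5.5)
Your proposal is correct and follows the paper's proof: every cyclic subgroup is core-cyclic, so Theorem~\ref{thm:core-trivial-subgroup} gives $\varphi_{\langle g\rangle}^G(u)=1$ for all $g$. The character formula $\chi_{\ell_G(u)}(g)=\varphi_{\langle g\rangle}^G(u)$ then forces $\ell_G(u)=1_{R_{\mathbb Q}(G)}$. Your added justification that characters separate elements of $R_{\mathbb Q}(G)$ is the same standard fact the paper uses implicitly.
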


\begin{proof}
Every cyclic subgroup is core-cyclic. Thus Theorem~\ref{thm:core-trivial-subgroup} gives $\varphi_{\langle g\rangle}^{G}(u)=1$ for every $g\in G$. By the character formula above, the character of $\ell_G(u)$ is the trivial character, so $\ell_G(u)=1_{R_{\mathbb Q}(G)}$.
\end{proof}

\begin{prop}[Criterion for equal core compressions]\label{prop:same-core-compression}
Let $u,v\in\B G^\times$. Then $\mathfrak n_G(u)=\mathfrak n_G(v)$ if and only if, for every $N\trianglelefteq G$, one of the following holds:
\begin{enumerate}
 \item both $\chi_N^u$ and $\chi_N^v$ are nontrivial;
 \item $\chi_N^u=\chi_N^v=1_{G/N}$ and $\varphi_N^{G}(u)=\varphi_N^{G}(v)$.
\end{enumerate}
\end{prop}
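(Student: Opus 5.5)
The plan is to reduce equality of core compressions to equality of all cumulative core coefficients, and then read off the two alternatives from Theorem~\ref{thm:yoshida-dichotomy}. By definition,
\[
\mathfrak n_G(u)=\sum_{N\trianglelefteq G}c_N(u)[G/N],
\]
and the elements $[G/N]$ for $N\trianglelefteq G$ are part of the orbit basis of $\B{G,\NS}$. So $\mathfrak n_G(u)=\mathfrak n_G(v)$ holds exactly when $c_N(u)=c_N(v)$ for every $N\trianglelefteq G$. The families $(c_N)$ and $(s_N)$ determine each other through the zeta transform and its inverse, Proposition~\ref{prop:mobius}. Hence this is in turn equivalent to $s_N(u)=s_N(v)$ for every $N\trianglelefteq G$, and the criterion becomes a statement about each $N$ separately.

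Fix $N\trianglelefteq G$. By Theorem~\ref{thm:yoshida-dichotomy}, $s_N(u)=0$ when $\chi_N^u\neq1_{G/N}$. When $\chi_N^u=1_{G/N}$, it gives $s_N(u)=\varphi_N^G(u)\in\{\pm1\}$, which is nonzero. The same holds for $v$. In particular, $s_N(u)=0$ if and only if $\chi_N^u$ is nontrivial. Suppose $s_N(u)=s_N(v)$. If the common value is $0$, both characters are nontrivial, which is alternative (1). If it is nonzero, both characters are trivial and $\varphi_N^G(u)=\varphi_N^G(v)$, which is alternative (2). Conversely, each alternative forces $s_N(u)=s_N(v)$: the common value is $0$ in case (1) and $\varphi_N^G(u)$ in case (2).

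There is no serious obstacle. The only point needing care is the first reduction: $\mathfrak n_G(u)$ lies in the free abelian group $\B{G,\NS}$, so comparing coefficients is legitimate, and the M\"obius inversion is invertible over $\Z$.
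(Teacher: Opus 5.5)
Your proposal is correct and follows essentially the same route as the paper: you reduce $\mathfrak n_G(u)=\mathfrak n_G(v)$ to $s_N(u)=s_N(v)$ for all $N\trianglelefteq G$ via M\"obius inversion, then split into the two alternatives using Theorem~\ref{thm:yoshida-dichotomy}. Your case analysis simply spells out the step the paper leaves implicit, namely that $s_N=0$ exactly when the Yoshida character is nontrivial.
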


\begin{proof}
The equality $\mathfrak n_G(u)=\mathfrak n_G(v)$ is equivalent to $s_N(u)=s_N(v)$ for every $N$. The assertion then follows from Theorem~\ref{thm:yoshida-dichotomy}.
\end{proof}

\subsection{Nonzero core coefficients in the normal partial Burnside ring}

Consider the normal partial Burnside ring $\B{G,\NS}$. By the known generating theorem recalled in Section~2, its unit group is generated by $-1_{\B G}$ and the elements
\[
w_{\lambda}:=1_{\B G}-[G/\ker\lambda]
\qquad
(\lambda\in\operatorname{Hom}(G,C_2)\setminus\{1_G\}).
\]
We now classify the nonzero core coefficients arising from these generators.

\begin{thm}[Classification of nonzero core coefficients]\label{thm:normal-pbr-nonzero-core}
Let $N\trianglelefteq G$. The following are equivalent:
\begin{enumerate}
 \item $c_N(u)\neq0$ for some $u\in\B{G,\NS}^{\times}$;
 \item $G/N$ is an elementary abelian $2$-group.
\end{enumerate}
Consequently,
\[
\bigcup_{u\in\B{G,\NS}^{\times}}
\{N\trianglelefteq G\mid c_N(u)\neq0\}
=
\{N\trianglelefteq G\mid G/N\text{ is an elementary abelian }2\text{-group}\}.
\]
The trivial quotient $G/G$ is regarded as an elementary abelian $2$-group of rank $0$.
\end{thm}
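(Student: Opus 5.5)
In $\B{G,\NS}$ every orbit $[G/M]$ has $M$ normal, so $\operatorname{core}_G(M)=M$. The core compression is therefore the identity there, and $c_N(u)$ is just the coefficient of $[G/N]$ in $u$. The plan is to write an arbitrary unit as $\pm\prod_{\lambda\in S}w_\lambda$ for a set $S$ of nontrivial homomorphisms $G\to C_2$, using the generating theorem recalled in Section~2, and to expand this product in the normal orbit basis.

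For the expansion I use the multiplication rule $[G/H][G/K]=|G:HK|[G/(H\cap K)]$ for normal $H,K$. By induction, for $T\subseteq S$ with $K_T:=\bigcap_{\lambda\in T}\ker\lambda$,
\[
\prod_{\lambda\in T}[G/\ker\lambda]=a_T[G/K_T]
\]
for some positive integer $a_T$. Hence
\[
\pm\prod_{\lambda\in S}w_\lambda=\pm\sum_{T\subseteq S}(-1)^{\#T}a_T[G/K_T].
\]
Every normal subgroup that occurs is an intersection of kernels of homomorphisms to $C_2$. Each $G/K_T$ embeds in $C_2^{\#T}$, so it is elementary abelian $2$. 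This proves (1)$\Rightarrow$(2): if $c_N(u)\neq0$, then $N=K_T$ for some $T$.

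For (2)$\Rightarrow$(1), suppose $G/N\cong C_2^k$. I would not analyse cancellation among the $a_T$. Instead I take the unit that the paper has already computed: let $u_{G/N}\in\B{G/N}^\times$ be the element of Example~\ref{ex:elementary-abelian}, the product of $1-[(G/N)/H]$ over all index-$2$ subgroups $H$ of $G/N$. It lies in the normal partial Burnside ring of $G/N$, because the quotient is abelian. Put
\[
u:=\operatorname{Inf}_{G/N}^{G}(u_{G/N})=\prod_{N\leq M,\ |G:M|=2}\bigl(1-[G/M]\bigr),
\]
which lies in $\B{G,\NS}^\times$. Proposition~\ref{prop:inflation-core} then gives
\[
c_N^G(u)=c_1^{G/N}(u_{G/N})=(-1)^k2^{\binom{k-1}{2}}\neq0
\]
when $k\geq1$. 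When $k=0$, i.e.\ $N=G$, we have $c_G(1)=1$. The displayed union statement is a restatement of the equivalence, with the rank-$0$ convention covering $N=G$.

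The main obstacle is the inverse direction, and in particular making sure that nonvanishing is not destroyed by cancellation. Appealing to the explicit Möbius computation in Example~\ref{ex:elementary-abelian}, transported by inflation, avoids this. I still need to check that inflation preserves membership in the normal partial ring; it does, since the orbits $[G/M]$ with $M\supseteq N$ normal inflate to orbits with normal stabilizers.
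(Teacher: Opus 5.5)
Your proof is correct. The direction (1)$\Rightarrow$(2) matches the paper's argument: you expand $\pm\prod_{\lambda\in S}w_\lambda$ and note that every stabilizer is some $K_T=\bigcap_{\lambda\in T}\ker\lambda$, whose quotient embeds in $C_2^{\#T}$.

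For (2)$\Rightarrow$(1) you take a genuinely different route. The paper inflates a \emph{basis} $\lambda_1,\dots,\lambda_r$ of $\operatorname{Hom}(G/N,C_2)$ and expands $\prod_{i=1}^r(1-[G/\ker\lambda_i])$. Since the $\lambda_i$ are linearly independent, $|G:K_T|=2^{\#T}$. Hence only the full index set gives $K_T=N$, and $c_N(u)=(-1)^r$.

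So the cancellation you identify as the main obstacle never arises if you use a basis rather than all $2^k-1$ index-$2$ subgroups containing $N$. With your choice the $K_T$ do coincide, and you resolve this by importing the M\"obius computation of Example~\ref{ex:elementary-abelian} through Proposition~\ref{prop:inflation-core}. That step is legitimate and not circular, since both results come before the theorem and do not use it. The details also check out. The value is $c_1^{C_2^k}(u_{C_2^k})=(-1)^k2^{\binom{k-1}{2}}\neq0$. Inflation is a unital ring map sending $[(G/N)/(M/N)]$ to $[G/M]$ with $M\trianglelefteq G$, so $u\in\B{G,\NS}^\times$.

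The trade-off is this. Your argument relies on Theorem~\ref{thm:core-cumulative}, M\"obius inversion on $\operatorname{Sub}(C_2^k)$, and the inflation compatibility. The paper needs only the multiplication rule $[G/H][G/K]=|G:HK|[G/(H\cap K)]$ and produces the minimal witness with $c_N(u)=\pm1$. In exchange, your witness shows that inside $\B{G,\NS}^\times$ itself the $N$-core coefficient can reach absolute value $2^{\binom{k-1}{2}}$ for $G/N\cong C_2^k$.
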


\begin{proof}
Every unit can be written as
\[
u=\varepsilon\prod_{\lambda\in S}w_{\lambda},
\qquad \varepsilon\in\{\pm1\},
\quad S\subseteq \operatorname{Hom}(G,C_2)\setminus\{1_G\}.
\]
Every stabilizer appearing in the expansion is of the form
\[
N_T:=\bigcap_{\lambda\in T}\ker\lambda
\qquad(T\subseteq S).
\]
Thus, if $c_N(u)\neq0$, then $N=N_T$ for some $T$, and $G/N$ embeds into $C_2^T$. Hence $G/N$ is an elementary abelian $2$-group.

Conversely, suppose $G/N\cong C_2^r$. Choose a basis $\lambda_1,\dots,\lambda_r$ for this character group and inflate it to $G$. In the expansion of
\[
u=\prod_{i=1}^r(1-[G/\ker\lambda_i]),
\]
the only term whose stabilizer is $N$ is obtained by choosing all $r$ factors, and its coefficient is $(-1)^r$. Hence $c_N(u)=(-1)^r\neq0$.
\end{proof}

\begin{dfn}
A finite group $Q$ is called a \emph{Dedekind group} if every subgroup of $Q$ is normal.
\end{dfn}

The structure of finite Dedekind groups is well known. By the Dedekind--Baer theorem, a finite Dedekind group $Q$ is either abelian or has the form
\[
Q\cong Q_8\times E\times A,
\]
where $Q_8$ is the quaternion group of order $8$, $E$ is an elementary abelian $2$-group, and $A$ is an abelian group of odd order; see, for example,~\cite[Theorem~5.3.7]{Robinson1996}.

\begin{cor}[Nonzero core coefficients for Dedekind quotients]
\label{cor:dedekind-quotient-nonzero-core}
Let $N\trianglelefteq G$ and suppose that $G/N$ is a Dedekind group. Then the following are equivalent:
\begin{enumerate}
 \item $c_N^G(u)\neq0$ for some $u\in\B G^\times$;
 \item $G/N$ is an elementary abelian $2$-group.
\end{enumerate}
\end{cor}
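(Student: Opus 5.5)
By Corollary~\ref{cor:quotient-value-set}, I reduce to the case $N=1$: set $Q:=G/N$, and the task becomes showing that, for a Dedekind group $Q$, some unit $v\in\B Q^\times$ has $c_1^Q(v)\neq0$ if and only if $Q$ is an elementary abelian $2$-group.

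The key observation is that in a Dedekind group every subgroup is normal. Hence $\operatorname{Sub}(Q)=\operatorname{NS}(Q)$, and $\B Q=\B{Q,\operatorname{NS}(Q)}$ as rings. In particular $\B Q^\times=\B{Q,\operatorname{NS}(Q)}^\times$.

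Theorem~\ref{thm:normal-pbr-nonzero-core}, applied to the group $Q$ and its normal subgroup $1$, then states the following: $c_1(v)\neq0$ for some $v\in\B{Q,\operatorname{NS}(Q)}^\times$ if and only if $Q/1\cong Q$ is an elementary abelian $2$-group. Combining this with the first paragraph gives the equivalence.

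To make the argument self-contained, I also record the remaining compatibility. The core coefficients $c_M^Q$ used in Corollary~\ref{cor:quotient-value-set} are defined with respect to $\D=\operatorname{Sub}(Q)$. Those in Theorem~\ref{thm:normal-pbr-nonzero-core} are defined with respect to $\D=\operatorname{NS}(Q)$. These agree here, because the orbit bases coincide when all subgroups are normal, and $\operatorname{core}_Q(H)=H$ for every $H\leq Q$.

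I expect no serious obstacle. The only point needing care is the identification of the two partial Burnside rings, and with it the identification of the two notions of core coefficient. The Dedekind--Baer structure theorem is not needed. I would only remark that the case $Q\cong Q_8\times E\times A$ is covered by the same argument, since the equality $\operatorname{Sub}(Q)=\operatorname{NS}(Q)$ holds by definition.
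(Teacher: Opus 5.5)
Your proposal is correct and follows the paper's proof. The paper reduces to $c_1^{G/N}$ via Corollary~\ref{cor:quotient-value-set}, observes that $\B{G/N}=\B{G/N,\operatorname{NS}(G/N)}$ because $G/N$ is Dedekind, and applies Theorem~\ref{thm:normal-pbr-nonzero-core} to the trivial subgroup of $G/N$, exactly as you do. You also check that the core coefficients relative to $\operatorname{Sub}(Q)$ and to $\operatorname{NS}(Q)$ coincide; the paper leaves this implicit, and your check is valid.
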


\begin{proof}
By Corollary~\ref{cor:quotient-value-set},
\[
\{c_N^G(u)\mid u\in\B G^\times\}
=
\{c_1^{G/N}(v)\mid v\in\B{G/N}^\times\}.
\]
Since $G/N$ is a Dedekind group,
\[
\B{G/N}=\B{G/N,\operatorname{NS}(G/N)}.
\]
The result follows by applying Theorem~\ref{thm:normal-pbr-nonzero-core} to the trivial subgroup of $G/N$.
\end{proof}

\subsection{Cumulative core coefficients of units in the normal PBR}

For $S\subseteq\operatorname{Hom}(G,C_2)\setminus\{1_G\}$ and $\varepsilon\in\{\pm1\}$, consider the unit
\[
u_{\varepsilon,S}
:=\varepsilon\prod_{\lambda\in S}(1_{\B G}-[G/\ker\lambda])
\in\B{G,\NS}^{\times}.
\]

\begin{thm}[Standard form for units in the normal PBR]\label{thm:normal-unit-unique-subset}
Every $u\in\B{G,\NS}^{\times}$ has a unique expression
\[
u=u_{\varepsilon,S}
=\varepsilon\prod_{\lambda\in S}(1_{\B G}-[G/\ker\lambda]).
\]
Consequently,
\[
\B{G,\NS}^{\times}
\cong C_2\times C_2^{\,\#(\operatorname{Hom}(G,C_2)\setminus\{1_G\})}.
\]
\end{thm}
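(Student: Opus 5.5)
Existence of the expression follows from the generating theorem recalled in Section~2: $\B{G,\NS}^{\times}$ is generated by $-1_{\B G}$ and the elements $u_N=1_{\B G}-[G/N]$ with $|G:N|=2$. All these generators are involutions and commute, since $\B G$ is commutative. Normal subgroups of index $2$ correspond bijectively to nontrivial $\lambda\in\operatorname{Hom}(G,C_2)$ via $N=\ker\lambda$, so every unit is a product $\varepsilon\prod_{\lambda\in S}w_\lambda$ for some sign $\varepsilon$ and subset $S$.

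For uniqueness I will use marks, since the mark homomorphism is injective. Suppose $u_{\varepsilon,S}=u_{\varepsilon',S'}$. First evaluate $\varphi_G^G$: for every $\lambda\neq1_G$ we have $\varphi_G^G(w_\lambda)=1$, because $G\nleq\ker\lambda$. Hence $\varepsilon=\varepsilon'$.

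To compare $S$ and $S'$, evaluate at $\langle g\rangle$, or at any subgroup $L$. The formula for marks on normal orbits gives $\varphi_L^G(w_\lambda)=-1$ if $L\leq\ker\lambda$ and $1$ otherwise, that is, $\varphi_L^G(w_\lambda)=\lambda(L)$ when $\lambda$ is written multiplicatively as a $\{\pm1\}$-valued homomorphism and $L=\langle g\rangle$. Thus
\[
\varphi_{\langle g\rangle}^G(u_{\varepsilon,S})
=\varepsilon\prod_{\lambda\in S}\bigl(-\lambda(g)\bigr)\cdot(-1)^{0}
\]
up to rewriting. More cleanly, $\varphi_{\langle g\rangle}^G(w_\lambda)=\lambda(g)$ for $g$ a single element, since $\langle g\rangle\leq\ker\lambda$ if and only if $\lambda(g)=1$, in which case the mark is $-1$. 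So in fact $\varphi_{\langle g\rangle}^G(w_\lambda)=-\lambda(g)$.

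Dividing the two expressions, uniqueness reduces to the following claim: if $T=S\triangle S'$ and
\[
\prod_{\lambda\in T}\bigl(-\lambda(g)\bigr)=1 \quad\text{for all } g\in G,
\]
then $T=\emptyset$. This is the main obstacle, and I will handle it by character orthogonality. The function $g\mapsto\prod_{\lambda\in T}\lambda(g)$ is the character $\lambda_T=\prod_{\lambda\in T}\lambda$ of the $\F_2$-space $\operatorname{Hom}(G,C_2)$. The condition says $\lambda_T$ is constant equal to $(-1)^{\#T}$. Evaluating at $g=1$ forces $\#T$ to be even, and then $\lambda_T=1_G$.

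Marks at cyclic subgroups alone therefore do not suffice, and I will also use marks at general subgroups $L$. Take $L$ to range over the subgroups $N_U=\bigcap_{\lambda\in U}\ker\lambda$, and induct via a triangularity argument in the normal orbit basis. Expanding $u_{\varepsilon,S}$ in the orbit basis, the coefficient of $[G/N_U]$ is controlled by the subsets of $S$ whose kernels intersect to $N_U$. In particular, the coefficient of $[G/\ker\lambda]$ is $-\varepsilon$ exactly when $\lambda\in S$: the only subsets $T\subseteq S$ with $N_T=\ker\lambda$ of index $2$ are the singleton $\{\lambda\}$ and the empty set, and the empty set gives $N_\emptyset=G$. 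Since orbit coefficients are unique in $\B{G,\NS}$, comparing coefficients of $[G/\ker\lambda]$ recovers $S$ from $u$. This coefficient comparison is the argument I will actually write.

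The isomorphism then follows. The map $(\varepsilon,S)\mapsto u_{\varepsilon,S}$ is a bijection from $\{\pm1\}\times\mathcal P(\operatorname{Hom}(G,C_2)\setminus\{1_G\})$ to $\B{G,\NS}^{\times}$. Because every $w_\lambda$ is an involution, this bijection is a group isomorphism when the power set carries symmetric difference. This gives $C_2\times C_2^{\#(\operatorname{Hom}(G,C_2)\setminus\{1_G\})}$.
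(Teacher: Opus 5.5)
Your argument is correct, but for uniqueness you take a different route from the paper.

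The paper stays with marks. Besides $\varphi_G^G(u_{\varepsilon,S})=\varepsilon$, it evaluates at the index-$2$ subgroups themselves. There $\varphi_{\ker\lambda}^G(u_{\varepsilon,S})=-\varepsilon$ if $\lambda\in S$ and $=\varepsilon$ otherwise, because $\ker\lambda\leq\ker\mu$ forces $\mu=\lambda$. So $S$ is recovered in one line as $\{\lambda\mid\varphi_{\ker\lambda}^G(u)=-\varphi_G^G(u)\}$.

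You were one step from this when you proposed evaluating at the subgroups $N_U$ (take $U=\{\lambda\}$). The detour through cyclic marks can simply be deleted. As you noticed, those marks only see the function $g\mapsto\varepsilon(-1)^{\#S}\prod_{\lambda\in S}\lambda(g)$. That function does not separate $\emptyset$ from a nonempty set of even size with trivial product, and such sets already exist for $C_2^3$.

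Your orbit-coefficient comparison instead reads $S$ off the orbit basis as $\{\lambda\mid c_{\ker\lambda}(u)\neq0\}$, with $c_{\ker\lambda}(u_{\varepsilon,S})=-\varepsilon$ for $\lambda\in S$. This is the same kind of expansion the paper uses in the proof of Theorem~\ref{thm:normal-pbr-nonzero-core}, and it ties the standard form to core coefficients. The small extra cost is invoking $[G/H][G/K]=|G:HK|[G/(H\cap K)]$ to know that each $\prod_{\mu\in T}[G/\ker\mu]$ is a multiple of the single orbit $[G/N_T]$. The two computations are interchangeable via Proposition~\ref{prop:prime-index}, which gives $c_{\ker\lambda}(u)=\bigl(\varphi_{\ker\lambda}^G(u)-\varphi_G^G(u)\bigr)/2$.

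One sentence needs repair when you write it up: the empty set is not among the $T$ with $N_T=\ker\lambda$. The correct reasoning is as follows.
\begin{itemize}
\item If $N_T=\ker\lambda$, then $\ker\mu\supseteq\ker\lambda$ for every $\mu\in T$, hence $\ker\mu=\ker\lambda$ and $\mu=\lambda$.
\item Since $N_\emptyset=G$, the only possibility is $T=\{\lambda\}$.
\item This occurs exactly when $\lambda\in S$, and it contributes exactly $-\varepsilon[G/\ker\lambda]$.
\end{itemize}

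Your explicit check that $(\varepsilon,S)\mapsto u_{\varepsilon,S}$ is a homomorphism for symmetric difference, using $w_\lambda^2=1_{\B G}$, fills in the ``consequently'' that the paper leaves implicit.
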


\begin{proof}
Existence follows from~\cite[Proposition~4.6]{Wakatake2018}. We prove uniqueness. First,
\[
\varphi_G^{G}(u_{\varepsilon,S})=\varepsilon,
\]
so $\varepsilon$ is uniquely determined by $u_{\varepsilon,S}$. Next, for any $\lambda\in\operatorname{Hom}(G,C_2)\setminus\{1_G\}$,
\[
\varphi_{\ker\lambda}^{G}(u_{\varepsilon,S})
=
\begin{cases}
-\varepsilon,&\lambda\in S,\\
\varepsilon,&\lambda\notin S.
\end{cases}
\]
Hence
\[
\lambda\in S
\quad\Longleftrightarrow\quad
\varphi_{\ker\lambda}^{G}(u_{\varepsilon,S})
=-\varphi_G^{G}(u_{\varepsilon,S}),
\]
so $S$ is uniquely determined as well.
\end{proof}

\begin{prop}[Cumulative core coefficients of units in the normal PBR]\label{prop:normal-unit-cumulative}
Let $N\trianglelefteq G$. Here $\chi_N^{u_{\varepsilon,S}}$ is the Yoshida character from Definition~\ref{dfn:yoshida-character}. Each $\lambda$ satisfying $N\leq\ker\lambda$ induces a character of $G/N$, so
\[
gN\longmapsto
\prod_{\substack{\lambda\in S\\ N\leq\ker\lambda}}\lambda(g)
\]
is a well-defined character of $G/N$. Then
\[
\varphi_N^{G}(u_{\varepsilon,S})
=\varepsilon(-1)^{\#\{\lambda\in S\mid N\leq\ker\lambda\}},
\]
\[
\chi_N^{u_{\varepsilon,S}}(gN)
=\prod_{\substack{\lambda\in S\\ N\leq\ker\lambda}}\lambda(g),
\]
and
\[
s_N(u_{\varepsilon,S})=
\begin{cases}
\varepsilon(-1)^{\#\{\lambda\in S\mid N\leq\ker\lambda\}},
&\displaystyle\prod_{\substack{\lambda\in S\\ N\leq\ker\lambda}}\lambda=1_G,\\
0,&\displaystyle\prod_{\substack{\lambda\in S\\ N\leq\ker\lambda}}\lambda\neq1_G.
\end{cases}
\]
\end{prop}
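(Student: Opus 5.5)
The marks of $u_{\varepsilon,S}$ are computed factor by factor, and then Theorem~\ref{thm:yoshida-dichotomy} converts them into $s_N$. Since $\varphi_L^G$ is a ring homomorphism, for any subgroup $L\leq G$ we have
\[
\varphi_L^G(u_{\varepsilon,S})=\varepsilon\prod_{\lambda\in S}\varphi_L^G\bigl(1_{\B G}-[G/\ker\lambda]\bigr).
\]
By the mark formula for normal orbits recalled in Section~2, $\varphi_L^G([G/\ker\lambda])$ equals $2$ if $L\leq\ker\lambda$ and $0$ otherwise. So each factor is $-1$ when $L\leq\ker\lambda$ and $+1$ otherwise, which gives
\[
\varphi_L^G(u_{\varepsilon,S})=\varepsilon(-1)^{\#\{\lambda\in S\mid L\leq\ker\lambda\}}.
\]
Taking $L=N$ yields the first formula.

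For the Yoshida character, take $L=\langle N,g\rangle$. Since $\ker\lambda$ is normal, $\langle N,g\rangle\leq\ker\lambda$ holds if and only if $N\leq\ker\lambda$ and $\lambda(g)=1$. Let $S_N:=\{\lambda\in S\mid N\leq\ker\lambda\}$; then
\[
\{\lambda\in S\mid \langle N,g\rangle\leq\ker\lambda\}=\{\lambda\in S_N\mid \lambda(g)=1\}.
\]
Dividing the mark at $\langle N,g\rangle$ by the mark at $N$ cancels $\varepsilon$. The quotient is $(-1)$ raised to the number of $\lambda\in S_N$ with $\lambda(g)=-1$, and this is exactly $\prod_{\lambda\in S_N}\lambda(g)$ in $\{\pm1\}$. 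The character is well defined on $G/N$ because each $\lambda\in S_N$ kills $N$.

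Finally apply Theorem~\ref{thm:yoshida-dichotomy}. Since $G\to G/N$ is surjective, $\chi_N^{u_{\varepsilon,S}}$ is trivial on $G/N$ exactly when $\prod_{\lambda\in S_N}\lambda=1_G$. In that case $s_N(u_{\varepsilon,S})=\varphi_N^G(u_{\varepsilon,S})$; otherwise $s_N(u_{\varepsilon,S})=0$.

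None of these steps is a real obstacle. The only point needing care is the equivalence $\langle N,g\rangle\leq\ker\lambda\iff N\leq\ker\lambda$ and $\lambda(g)=1$, together with keeping track of the sign bookkeeping in the quotient of marks.
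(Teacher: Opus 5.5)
Your proposal is correct and follows essentially the same route as the paper. You compute the marks factor by factor, obtain the Yoshida character as the ratio of the marks at $\langle N,g\rangle$ and at $N$, and then apply Theorem~\ref{thm:yoshida-dichotomy}; you simply spell out the sign bookkeeping that the paper leaves implicit.
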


\begin{proof}
The $H$-mark of $1_{\B G}-[G/\ker\lambda]$ is $-1$ if $H\leq\ker\lambda$ and $1$ otherwise. Multiplying the contributions of the factors gives the first formula. Taking $H=\langle N,g\rangle$ and forming $\varphi_H^G(u_{\varepsilon,S})/\varphi_N^G(u_{\varepsilon,S})$ gives the formula for the Yoshida character. The formula for the cumulative core coefficient then follows from Theorem~\ref{thm:yoshida-dichotomy}.
\end{proof}

In particular,
\[
s_N(u_{\varepsilon,S})\neq0
\quad\Longleftrightarrow\quad
\prod_{\substack{\lambda\in S\\ N\leq\ker\lambda}}\lambda=1_G.
\]
When it is nonzero,
\[
s_N(u_{\varepsilon,S})
=\varepsilon(-1)^{\#\{\lambda\in S\mid N\leq\ker\lambda\}}.
\]

\section{Linear criteria and decompositions for core-trivial units}
\subsection{Core-trivial units and primitive idempotents}

In this section we reduce the nontriviality of $\mathcal U_{\mathrm{core}}(G)$ to an integrality condition for sums of rational primitive idempotents corresponding to conjugacy classes of non-core-cyclic subgroups.

Write
\[
\mathbb Q\B G
:=
\mathbb Q\otimes_{\mathbb Z}\B G
\]
for the rationalization of the Burnside ring. Extending scalars in the mark homomorphism defined in Section~2 yields a $\mathbb Q$-algebra isomorphism
\[
\varphi_{\mathbb Q}^G:
\mathbb Q\B G
\longrightarrow
\prod_{(H)\in\operatorname{Sub}(G)^c}\mathbb Q;
\]
see, for example,~\cite[\S 3.3]{Bouc2010}. Hence, for each conjugacy class $(H)$ of subgroups, there is a unique element $e_H^G\in\mathbb Q\B G$ satisfying
\[
\varphi_K^{G}(e_H^G)
=
\begin{cases}
1,&(K)=(H),\\
0,&(K)\neq(H),
\end{cases}
\]
and these elements are precisely the primitive idempotents of $\mathbb Q\B G$. In particular,
\[
1_{\B G}=\sum_{(H)\in\operatorname{Sub}(G)^c}e_H^G,
\qquad
e_H^Ge_K^G=
\begin{cases}
e_H^G,&(H)=(K),\\
0,&(H)\neq(K).
\end{cases}
\]

Let
\[
\mu_{\operatorname{Sub}(G)}(K,H)
\qquad(K\leq H\leq G)
\]
denote the M\"obius function of the subgroup lattice of $G$. By the Gluck--Yoshida formula for primitive idempotents~\cite{Gluck1981,Yoshida1983},
\[
e_H^G
=
\frac{1}{|N_G(H)|}
\sum_{K\leq H}
|K|\,\mu_{\operatorname{Sub}(G)}(K,H)[G/K].
\]
See also~\cite[Theorem~3.3.5]{Bouc2010}. We write
\[
\operatorname{Sub}_{\mathrm{ncc}}(G)
:=
\left\{H\leq G\ \middle|\ H/\operatorname{core}_G(H)\text{ is noncyclic}\right\}
\]
for the set of non-core-cyclic subgroups of $G$. Here ``ncc'' stands for ``non-core-cyclic''. The set of their $G$-conjugacy classes is denoted by $\operatorname{Sub}_{\mathrm{ncc}}(G)^c$.

For $S,T\subseteq\operatorname{Sub}(G)^c$, write
\[
S\triangle T
:=
(S\setminus T)\cup(T\setminus S)
\]
for their symmetric difference. For $S\subseteq\operatorname{Sub}(G)^c$, put
\[
e_S:=\sum_{(H)\in S}e_H^G.
\]

\begin{lem}\label{lem:idempotent-symmetric-difference}
For every subgroup $K\leq G$,
\[
\varphi_K^G(e_S)
=
\begin{cases}
1,&(K)\in S,\\
0,&(K)\notin S.
\end{cases}
\]
Moreover, for all $S,T\subseteq\operatorname{Sub}(G)^c$,
\[
e_Se_T=e_{S\cap T},
\]
and
\[
(1_{\B G}-2e_S)(1_{\B G}-2e_T)
=
1_{\B G}-2e_{S\triangle T}.
\]
\end{lem}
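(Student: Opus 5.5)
The approach is to work entirely through the rational mark isomorphism $\varphi_{\mathbb Q}^G$: since it is an injective $\mathbb Q$-algebra homomorphism, an identity in $\mathbb Q\B G$ holds as soon as the two sides have the same mark at every conjugacy class $(K)$. The first assertion follows from linearity of $\varphi_K^G$ (extended to $\mathbb Q\B G$) together with the defining property of the $e_H^G$:
\[
\varphi_K^G(e_S)=\sum_{(H)\in S}\varphi_K^G(e_H^G),
\]
and exactly one term can be nonzero, namely the one with $(H)=(K)$. That term occurs precisely when $(K)\in S$, in which case it equals $1$. Hence the mark is $1$ or $0$ as claimed.

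For the product formula, we take marks and use that $\varphi_K^G$ is multiplicative on $\mathbb Q\B G$. Then $\varphi_K^G(e_Se_T)=\varphi_K^G(e_S)\varphi_K^G(e_T)$, which by the first part equals $1$ if $(K)\in S\cap T$ and $0$ otherwise. This is exactly $\varphi_K^G(e_{S\cap T})$, and injectivity of $\varphi_{\mathbb Q}^G$ gives $e_Se_T=e_{S\cap T}$. Alternatively, one can expand using the orthogonality relations $e_H^Ge_K^G=\delta_{(H),(K)}e_H^G$ recalled above; both routes are immediate.

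For the last identity, we compute marks again. For each $K$, the value $\varphi_K^G(1_{\B G}-2e_S)$ equals $-1$ if $(K)\in S$ and $1$ otherwise, i.e.\ it is $(-1)^{[(K)\in S]}$. Multiplying the corresponding values for $S$ and $T$ gives $(-1)^{[(K)\in S]+[(K)\in T]}$, which is $-1$ exactly when $(K)$ lies in precisely one of $S,T$, that is, when $(K)\in S\triangle T$. This is the mark of $1_{\B G}-2e_{S\triangle T}$ at $K$. One could equally expand algebraically, $1-2e_S-2e_T+4e_{S\cap T}$, and use $e_S+e_T=e_{S\triangle T}+2e_{S\cap T}$, which again follows from the first part by comparing marks.

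There is no real obstacle here. The only point needing care is that these identities are asserted in $\mathbb Q\B G$, not $\B G$, since $e_S$ need not be integral. We therefore invoke injectivity of the rationalized mark map $\varphi_{\mathbb Q}^G$ rather than that of the integral mark homomorphism.
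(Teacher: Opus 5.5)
Your proposal is correct and essentially matches the paper's proof. The first formula comes from linearity and the defining marks of $e_H^G$, and the two product identities follow from orthogonality and the expansion $1-2e_S-2e_T+4e_{S\cap T}=1-2e_{S\triangle T}$; the paper uses that route, while your primary route compares marks via the injective rational mark map, which is equivalent here.
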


\begin{proof}
By the definition of the primitive idempotents,
\[
\varphi_K^G(e_H^G)
=
\begin{cases}
1,&(K)=(H),\\
0,&(K)\neq(H),
\end{cases}
\]
which immediately gives the first formula. Orthogonality of the primitive idempotents yields
\[
e_Se_T
=
\sum_{(H)\in S}\sum_{(L)\in T}e_H^Ge_L^G
=
e_{S\cap T}.
\]
Therefore
\[
\begin{aligned}
(1_{\B G}-2e_S)(1_{\B G}-2e_T)
&=1_{\B G}-2e_S-2e_T+4e_{S\cap T}\\
&=1_{\B G}-2e_{S\triangle T}.
\end{aligned}
\]
\end{proof}

The primitive-idempotent decomposition above and the description of Burnside-ring units~\cite{Yoshida1990} give the bijection
\[
u\longmapsto\frac{1_{\B G}-u}{2}
\]
between $\B G^\times$ and
\[
\left\{
e\in\mathbb Q\B G
\ \middle|\
e^2=e,\quad 2e\in\B G
\right\}.
\]
Its inverse is
\[
e\longmapsto1_{\B G}-2e.
\]
Consequently,
\[
\B G^\times
=
\left\{
1_{\B G}-2e_S
\ \middle|\
S\subseteq\operatorname{Sub}(G)^c,\quad 2e_S\in\B G
\right\}.
\]

\begin{thm}[Core-trivial units and primitive idempotents]\label{thm:core-trivial-idempotents}
Let
\[
\mathcal S_2(G)
:=
\left\{
S\subseteq\operatorname{Sub}_{\mathrm{ncc}}(G)^c
\ \middle|\
2e_S\in\B G
\right\}.
\]
Then
\[
\mathcal U_{\mathrm{core}}(G)
=
\left\{
1_{\B G}-2e_S
\ \middle|\
S\in\mathcal S_2(G)
\right\}.
\]
Moreover, $\mathcal S_2(G)$ is an $\F_2$-vector space with symmetric difference as addition.
\end{thm}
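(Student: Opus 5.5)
The plan is to combine the description of all units as $1_{\B G}-2e_S$, recalled just before the statement, with the mark characterization of core-trivial units in Theorem~\ref{thm:core-trivial-subgroup}. Lemma~\ref{lem:idempotent-symmetric-difference} then turns everything into bookkeeping on the set $S$.

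For the inclusion ``$\subseteq$'', take $u\in\mathcal U_{\mathrm{core}}(G)$. By the bijection $u\mapsto(1_{\B G}-u)/2$, we can write $u=1_{\B G}-2e_S$ with $S\subseteq\operatorname{Sub}(G)^c$ and $2e_S\in\B G$. Lemma~\ref{lem:idempotent-symmetric-difference} gives
\[
\varphi_K^G(u)=1-2\varphi_K^G(e_S),
\]
so $\varphi_K^G(u)=-1$ exactly when $(K)\in S$. Theorem~\ref{thm:core-trivial-subgroup} says $\varphi_H^G(u)=1$ for every core-cyclic $H$. Hence no class in $S$ is core-cyclic, so $S\subseteq\operatorname{Sub}_{\mathrm{ncc}}(G)^c$ and $S\in\mathcal S_2(G)$.

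For ``$\supseteq$'', take $S\in\mathcal S_2(G)$. Since $2e_S\in\B G$ and $e_S$ is idempotent, the recalled bijection shows that $u=1_{\B G}-2e_S$ is a unit. Its marks equal $1$ off $S$, in particular at every core-cyclic subgroup. We must also check that core-cyclicity is a property of the conjugacy class: $\operatorname{core}_G({}^gH)=\operatorname{core}_G(H)$ and ${}^gH/\operatorname{core}_G(H)\cong H/\operatorname{core}_G(H)$, so $\operatorname{Sub}_{\mathrm{ncc}}(G)^c$ is well defined. Theorem~\ref{thm:core-trivial-subgroup} then gives $u\in\mathcal U_{\mathrm{core}}(G)$.

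For the vector-space structure, consider the map $S\mapsto 1_{\B G}-2e_S$ from $\mathcal S_2(G)$ to $\mathcal U_{\mathrm{core}}(G)$. It is surjective by the above and injective because the marks recover $S$. By Lemma~\ref{lem:idempotent-symmetric-difference}, it carries symmetric difference to multiplication. Closure of $\mathcal S_2(G)$ under $\triangle$ also follows directly: if $S,T\in\mathcal S_2(G)$, then
\[
2e_{S\triangle T}=2e_S+2e_T-2\cdot(2e_S)(2e_T)/2\cdot\tfrac{2}{2},
\]
or more simply
\[
2e_{S\triangle T}=2e_S+2e_T-4e_Se_T=2e_S+2e_T-(2e_S)(2e_T)\in\B G,
\]
and $S\triangle T\subseteq\operatorname{Sub}_{\mathrm{ncc}}(G)^c$.

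Since $\mathcal U_{\mathrm{core}}(G)$ is an elementary abelian $2$-group (a subgroup of $\B G^\times$, every element of which squares to $1_{\B G}$), the bijection is a group isomorphism. Hence $\mathcal S_2(G)$, with symmetric difference, the empty set as zero, and every element its own inverse, is an $\F_2$-vector space. Equivalently, it is a subspace of the power set $\F_2^{\operatorname{Sub}_{\mathrm{ncc}}(G)^c}$.

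No step looks like a real obstacle. The only points needing care are invoking the unit/idempotent bijection correctly and noting that core-cyclicity is invariant under conjugation.
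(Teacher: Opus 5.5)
Your proposal is correct and follows essentially the same route as the paper. You write each unit as $1_{\B G}-2e_S$, read off that its marks are $-1$ exactly on $S$, and use Theorem~\ref{thm:core-trivial-subgroup} to force $S\subseteq\operatorname{Sub}_{\mathrm{ncc}}(G)^c$. Closure under symmetric difference then comes from Lemma~\ref{lem:idempotent-symmetric-difference}: your identity $2e_{S\triangle T}=2e_S+2e_T-(2e_S)(2e_T)$ is just the expanded form of the paper's $(1_{\B G}-2e_S)(1_{\B G}-2e_T)=1_{\B G}-2e_{S\triangle T}$. The garbled first display in that paragraph happens to equal the correct expression, but it should simply be deleted.
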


\begin{proof}
By the description above, every $u\in\B G^\times$ can be written uniquely as
\[
u=1_{\B G}-2e_S,
\]
where
\[
S\subseteq\operatorname{Sub}(G)^c,
\qquad
2e_S\in\B G.
\]
By Lemma~\ref{lem:idempotent-symmetric-difference},
\[
\varphi_H^G(u)
=
\begin{cases}
-1,&(H)\in S,\\
1,&(H)\notin S.
\end{cases}
\]
By Theorem~\ref{thm:core-trivial-subgroup}, the unit $u$ is core-trivial if and only if
\[
\varphi_H^G(u)=1
\]
for every core-cyclic subgroup $H\leq G$. This is equivalent to
\[
S\subseteq\operatorname{Sub}_{\mathrm{ncc}}(G)^c.
\]
Hence
\[
\mathcal U_{\mathrm{core}}(G)
=
\left\{
1_{\B G}-2e_S
\ \middle|\
S\in\mathcal S_2(G)
\right\}.
\]

If $S,T\in\mathcal S_2(G)$, Lemma~\ref{lem:idempotent-symmetric-difference} gives
\[
(1_{\B G}-2e_S)(1_{\B G}-2e_T)
=
1_{\B G}-2e_{S\triangle T}.
\]
The left-hand side lies in $\B G$, so $2e_{S\triangle T}\in\B G$, and hence $S\triangle T\in\mathcal S_2(G)$. Since $e_{\varnothing}=0$, we also have $\varnothing\in\mathcal S_2(G)$. Thus $\mathcal S_2(G)$ is an $\F_2$-vector space under symmetric difference.
\end{proof}

To each subset $S\subseteq\operatorname{Sub}_{\mathrm{ncc}}(G)^c$, associate its characteristic function
\[
x^S\in\F_2^{\operatorname{Sub}_{\mathrm{ncc}}(G)^c},
\qquad
x^S_{(H)}=
\begin{cases}
1,&(H)\in S,\\
0,&(H)\notin S.
\end{cases}
\]
Whenever $\mathcal S_2(G)$ is used in a linear-algebraic context, it denotes the image
\[
\{x^S\mid S\in\mathcal S_2(G)\}
\subseteq\F_2^{\operatorname{Sub}_{\mathrm{ncc}}(G)^c}
\]
under this correspondence.

\begin{cor}[Criterion via the table of marks]\label{cor:core-mark-matrix}
Let
\[
M=
\bigl(\varphi_H^G([G/K])\bigr)_{(H),(K)\in\operatorname{Sub}(G)^c}
\]
be the table of marks. For $x\in\F_2^{\operatorname{Sub}_{\mathrm{ncc}}(G)^c}$, define
\[
\widetilde x_{(H)}
=
\begin{cases}
x_{(H)},&(H)\in\operatorname{Sub}_{\mathrm{ncc}}(G)^c,\\
0,&(H)\notin\operatorname{Sub}_{\mathrm{ncc}}(G)^c,
\end{cases}
\]
so that
\[
\widetilde x\in\{0,1\}^{\operatorname{Sub}(G)^c}.
\]
Then
\[
x\in\mathcal S_2(G)
\iff
2M^{-1}\widetilde x\in\mathbb Z^{\operatorname{Sub}(G)^c}.
\]
\end{cor}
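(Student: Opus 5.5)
The plan is to unwind the definition of $\mathcal S_2(G)$ through the mark isomorphism. Let $x\in\F_2^{\operatorname{Sub}_{\mathrm{ncc}}(G)^c}$ and let $S$ be the subset with $x=x^S$. Then, by definition, $x\in\mathcal S_2(G)$ if and only if $2e_S\in\B G$.

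\emph{Step 1: translate into mark coordinates.} By Lemma~\ref{lem:idempotent-symmetric-difference}, $\varphi_K^G(e_S)$ is $1$ for $(K)\in S$ and $0$ otherwise. Hence the mark vector of $e_S$, indexed by $\operatorname{Sub}(G)^c$, is exactly the $\{0,1\}$-vector $\widetilde x$, regarded as an integer vector.

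\emph{Step 2: recover the orbit coordinates.} Write an element $y=\sum_{(K)}a_K[G/K]$ of $\mathbb Q\B G$ with rational coefficients. Its mark vector is $Ma$, where $a=(a_K)$ and $M$ is the table of marks with rows indexed by $(H)$ and columns by $(K)$. Since $\varphi^G_{\mathbb Q}$ is an isomorphism, $M$ is invertible over $\mathbb Q$. In fact it is triangular with nonzero diagonal entries $|N_G(H):H|$, once the classes are ordered compatibly with subconjugacy. So the orbit coordinates of $e_S$ are $M^{-1}\widetilde x$, and those of $2e_S$ are $2M^{-1}\widetilde x$.

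\emph{Step 3: conclude.} Since $\B G$ is the $\Z$-span of the orbit basis, and the orbit basis is also a $\mathbb Q$-basis of $\mathbb Q\B G$, an element lies in $\B G$ precisely when its coordinates are integers. Therefore $2e_S\in\B G$ if and only if $2M^{-1}\widetilde x\in\Z^{\operatorname{Sub}(G)^c}$, which is the claimed equivalence.

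The only point needing care is the bookkeeping of conventions: whether $M$ acts on column vectors of orbit coefficients, as set up here, and the identification of $\mathcal S_2(G)$ with characteristic vectors. There is no genuine obstacle. We also use that $\B G\to\mathbb Q\B G$ is injective, which holds because $\B G$ is free abelian.
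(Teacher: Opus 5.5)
Your proposal is correct and follows the paper's own argument. The paper likewise notes that $M^{-1}\widetilde x$ is the orbit-basis coefficient vector of $e_S$, so integrality of $2M^{-1}\widetilde x$ is exactly the defining condition $2e_S\in\B G$; your additional steps (the mark vector of $e_S$ being $\widetilde x$ via Lemma~\ref{lem:idempotent-symmetric-difference}, invertibility and triangularity of $M$, and $\B G\hookrightarrow\mathbb Q\B G$) just make explicit what the paper leaves implicit.
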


\begin{proof}
The vector $M^{-1}\widetilde x$ is the coefficient vector, with respect to the orbit basis, of $e_S$, where $S$ is the subset corresponding to $x$. Thus the assertion is equivalent to $2e_S\in\B G$, which is exactly the defining condition.
\end{proof}

For $S=\{(H)\}$, the primitive-idempotent formula gives the following integrality criterion.

\begin{prop}[Criterion for a single subgroup class]\label{prop:single-subgroup-core-unit}
Let $H\leq G$ be non-core-cyclic. For each subgroup conjugacy class $(K)$, put
\[
a_K(H)
:=
\sum_{\substack{L\leq H\\(L)=(K)}}
|L|\,\mu_{\operatorname{Sub}(G)}(L,H).
\]
Then
\[
1_{\B G}-2e_H^G\in\mathcal U_{\mathrm{core}}(G)
\iff
|N_G(H)|\mid 2a_K(H)
\qquad\bigl((K)\in\operatorname{Sub}(G)^c\bigr).
\]
\end{prop}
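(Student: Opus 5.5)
The plan is to reduce the statement to Theorem~\ref{thm:core-trivial-idempotents} applied to the singleton $S=\{(H)\}$, and then read off integrality from the Gluck--Yoshida formula.

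First, since $H$ is non-core-cyclic, $(H)\in\operatorname{Sub}_{\mathrm{ncc}}(G)^c$. Hence $\{(H)\}\subseteq\operatorname{Sub}_{\mathrm{ncc}}(G)^c$, and Theorem~\ref{thm:core-trivial-idempotents} gives
\[
1_{\B G}-2e_H^G\in\mathcal U_{\mathrm{core}}(G)
\iff
\{(H)\}\in\mathcal S_2(G)
\iff
2e_H^G\in\B G.
\]
For the forward implication of the first equivalence, I use the uniqueness of the representation $u=1_{\B G}-2e_S$ stated before that theorem. Concretely, $1_{\B G}-2e_H^G$ has marks $-1$ exactly on $(H)$, which identifies its $S$ as $\{(H)\}$.

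Next, I expand $2e_H^G$ in the orbit basis using the Gluck--Yoshida formula. I group the sum over $K\leq H$ by $G$-conjugacy class, noting that $[G/L]=[G/K]$ whenever $(L)=(K)$. This gives
\[
2e_H^G=\sum_{(K)\in\operatorname{Sub}(G)^c}\frac{2a_K(H)}{|N_G(H)|}[G/K],
\]
with $a_K(H)=0$ when no conjugate of $K$ lies in $H$.

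Finally, since $\{[G/K]\}_{(K)}$ is a $\Z$-basis of $\B G$ and a $\mathbb Q$-basis of $\mathbb Q\B G$, the element $2e_H^G$ lies in $\B G$ exactly when each coefficient is an integer. That is, $|N_G(H)|$ must divide $2a_K(H)$ for every $(K)$.

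The only point requiring care is the regrouping step. Distinct subgroups $L\leq H$ in one $G$-class all contribute to the same basis element $[G/K]$, so $a_K(H)$ must sum over all of them. This bookkeeping is straightforward, and I do not expect a genuine obstacle.
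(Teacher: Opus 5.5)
Your proposal is correct and follows essentially the same route as the paper: regroup the Gluck--Yoshida formula by $G$-conjugacy classes so that the coefficient of $[G/K]$ is $a_K(H)/|N_G(H)|$, then apply Theorem~\ref{thm:core-trivial-idempotents} to the singleton $S=\{(H)\}$. The forward implication is simpler than you make it: if $1_{\B G}-2e_H^G$ lies in $\B G$, then $2e_H^G=1_{\B G}-(1_{\B G}-2e_H^G)$ lies in $\B G$ directly, so you do not need the uniqueness of the representation $u=1_{\B G}-2e_S$.
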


\begin{proof}
Grouping $G$-conjugate subgroups $L$ in the Gluck--Yoshida formula above, the coefficient of $[G/K]$ is $a_K(H)/|N_G(H)|$. Thus integrality of $2e_H^G$ is equivalent to the stated divisibility conditions. The conclusion follows from Theorem~\ref{thm:core-trivial-idempotents}.
\end{proof}

\begin{ex}[A nontrivial core-trivial unit]\label{ex:s4-core-trivial-unit}
Let $G=S_4$ and let $H\cong S_3$ be a point stabilizer. Then
\[
\operatorname{core}_{S_4}(H)=1,
\qquad
N_{S_4}(H)=H,
\]
so $H$ is non-core-cyclic. Let $C_2\leq H$ be generated by a transposition and let $C_3\leq H$ be generated by a $3$-cycle. In the subgroup lattice of $H$,
\[
\mu_{\operatorname{Sub}(S_4)}(H,H)=1,
\qquad
\mu_{\operatorname{Sub}(S_4)}(C_2,H)
=\mu_{\operatorname{Sub}(S_4)}(C_3,H)=-1,
\qquad
\mu_{\operatorname{Sub}(S_4)}(1,H)=3.
\]
The three subgroups of order $2$ contained in $H$ are conjugate in $S_4$, so the Gluck--Yoshida formula gives
\[
e_H^{S_4}
=
[S_4/H]-[S_4/C_2]-\frac12[S_4/C_3]+\frac12[S_4/1].
\]
Hence $2e_H^{S_4}\in\B{S_4}$, and Theorem~\ref{thm:core-trivial-idempotents} shows that
\[
u
:=
1_{\B{S_4}}-2e_H^{S_4}
=
1_{\B{S_4}}-2[S_4/H]+2[S_4/C_2]+[S_4/C_3]-[S_4/1]
\]
is a nontrivial core-trivial unit.
\end{ex}

\subsection{A linear criterion from Yoshida's criterion}

For a subgroup $K\leq G$, write
\[
W_G(K):=N_G(K)/K.
\]
We shall use Yoshida's unit criterion~\cite[Proposition~6.5]{Yoshida1990}.

For a finite group $W$, define
\[
\operatorname{Cl}_0(W,\F_2)
:=
\left\{
f:W\longrightarrow\F_2
\ \middle|\
f(1)=0,\quad
f(awa^{-1})=f(w)\ \ (a,w\in W)
\right\}.
\]
Under pointwise addition this is an $\F_2$-vector space, and $\operatorname{Hom}(W,\F_2)$ is a subspace.

\begin{dfn}[Local defect space]
\label{dfn:local-yoshida-defect-space}
Define
\[
\mathfrak D(W)
:=
\operatorname{Cl}_0(W,\F_2)
\big/
\operatorname{Hom}(W,\F_2).
\]
For a class function $f$, write $[f]\in\mathfrak D(W)$ for its equivalence class.
\end{dfn}

\begin{prop}[Dimension of the local defect space]
\label{prop:local-yoshida-defect-dimension}
Let $k(W)$ denote the number of conjugacy classes of $W$. Then
\[
\dim_{\F_2}\mathfrak D(W)
=k(W)-1-\dim_{\F_2}\operatorname{Hom}(W,\F_2).
\]
Moreover,
\[
\mathfrak D(W)=0
\iff
W=1\text{ or }W\cong C_2.
\]
\end{prop}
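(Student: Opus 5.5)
The plan is to compute both dimensions directly and then subtract. First, $\operatorname{Cl}_0(W,\F_2)$ consists of $\F_2$-valued class functions on $W$ that vanish on the class of the identity. A class function is freely determined by its values on the $k(W)$ conjugacy classes, and the condition $f(1)=0$ fixes one coordinate, so
\[
\dim_{\F_2}\operatorname{Cl}_0(W,\F_2)=k(W)-1.
\]
Next we check that $\operatorname{Hom}(W,\F_2)$ really sits inside $\operatorname{Cl}_0(W,\F_2)$. A homomorphism sends $1$ to $0$, and since $\F_2$ is abelian it is constant on conjugacy classes. Hence it is a subspace, and the dimension formula follows from $\dim(V/U)=\dim V-\dim U$.

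For the characterization of $\mathfrak D(W)=0$, we use the formula just proved. If $W=1$, then $k(W)=1$ and $\operatorname{Hom}(W,\F_2)=0$, so the dimension is $0$. If $W\cong C_2$, then $k(W)=2$ and $\dim\operatorname{Hom}(W,\F_2)=1$, so again the dimension is $0$. For the converse, suppose $\mathfrak D(W)=0$. Then every class function vanishing at $1$ is a homomorphism to $\F_2$.

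Assume $W\neq 1$ and pick any nontrivial conjugacy class $C$. Let $\delta_C$ be its indicator function; it lies in $\operatorname{Cl}_0(W,\F_2)$, so it is a homomorphism. Its kernel $W\setminus C$ is then a subgroup of index at most $2$. It is a proper subgroup because $C$ is nonempty, so its index is exactly $2$ and $\#C=|W|/2$.

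Since $1\notin C$, the set $W\setminus C$ contains $1$, and $C$ is the unique nontrivial coset. If some other nontrivial class $C'\neq C$ existed, the same argument would give $C'=W\setminus(W\setminus C')$ as the nontrivial coset of an index-$2$ subgroup, so $\#C'=|W|/2$. But $C$ and $C'$ are disjoint subsets of $W\setminus\{1\}$, which forces $|W|\geq 1+|W|$, a contradiction. Thus $W$ has exactly two conjugacy classes, $\{1\}$ and $C$, with $\#C=|W|/2$. It follows that $|W|=1+|W|/2$, so $|W|=2$ and $W\cong C_2$.

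The only mildly delicate point is this last counting step. Alternatively, one can use the standard fact that a group with exactly two conjugacy classes is $C_2$, via the class equation $|W|=1+|W|/|C_W(x)|$.
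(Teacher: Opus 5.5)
Your proof is correct. The dimension formula is handled exactly as in the paper; the difference lies in the direction $\mathfrak D(W)=0\Rightarrow W=1$ or $W\cong C_2$.

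The paper argues numerically. With $d=\dim_{\F_2}\operatorname{Hom}(W,\F_2)$, the surjection $W\to W/[W,W]W^{[2]}\cong\F_2^{d}$ is constant on conjugacy classes, so $k(W)\geq 2^{d}$. The vanishing $k(W)-1=d$ then forces $2^{d}-1\leq d$, hence $d\in\{0,1\}$ and $k(W)\in\{1,2\}$, and a class-size divisibility argument finishes.

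You instead apply the equality $\operatorname{Cl}_0(W,\F_2)=\operatorname{Hom}(W,\F_2)$ directly to the indicator functions $\delta_C$ of nontrivial classes. This shows that every nontrivial class is the nontrivial coset of an index-$2$ subgroup, so it has size $|W|/2$. Two such classes cannot fit inside $W\setminus\{1\}$, so $k(W)=2$ and $|W|=1+|W|/2=2$.

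What each route buys:
\begin{itemize}
\item Your argument is more hands-on and never needs the maximal elementary abelian $2$-quotient. It also exhibits an explicit witness: when $|W|>2$, some $[\delta_C]$ is nonzero in $\mathfrak D(W)$.
\item The paper's argument uses only the numerical equality. As a byproduct it gives the general lower bound $\dim_{\F_2}\mathfrak D(W)\geq 2^{d}-1-d$ in terms of $d$ alone.
\end{itemize}

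Both arguments end with essentially the same elementary observation about a group with exactly two conjugacy classes.
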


\begin{proof}
By definition,
\[
\dim_{\F_2}\operatorname{Cl}_0(W,\F_2)=k(W)-1,
\]
which gives the dimension formula.

Let
\[
W^{[2]}:=\langle w^2\mid w\in W\rangle.
\]
The quotient $W/[W,W]W^{[2]}$ is an elementary abelian $2$-group and
\[
|W/[W,W]W^{[2]}|
=2^{\dim_{\F_2}\operatorname{Hom}(W,\F_2)}.
\]
The quotient map $W\to W/[W,W]W^{[2]}$ is constant on conjugacy classes, so
\[
k(W)
\geq
2^{\dim_{\F_2}\operatorname{Hom}(W,\F_2)}.
\]
If $\mathfrak D(W)=0$, then
\[
k(W)-1=\dim_{\F_2}\operatorname{Hom}(W,\F_2),
\]
and hence
\[
2^{\dim_{\F_2}\operatorname{Hom}(W,\F_2)}-1
\leq
\dim_{\F_2}\operatorname{Hom}(W,\F_2).
\]
Thus $\dim_{\F_2}\operatorname{Hom}(W,\F_2)$ is $0$ or $1$. In the first case $k(W)=1$, so $W=1$. In the second case $k(W)=2$. If a finite group has exactly two conjugacy classes, then the nonidentity conjugacy class has size $|W|-1$ and must divide $|W|$, whence $|W|=2$ and $W\cong C_2$. Conversely, $\mathfrak D(W)=0$ is immediate for $W=1$ and $W=C_2$.
\end{proof}

For a finite group $G$, put
\[
V_G:=\F_2^{\operatorname{Sub}_{\mathrm{ncc}}(G)^c}.
\]
The characteristic functions $x^S$ defined above are elements of $V_G$, and by our convention $\mathcal S_2(G)$ is viewed as a subspace of $V_G$. For $x\in V_G$, define
\[
\widetilde x_{(H)}
:=
\begin{cases}
x_{(H)},&(H)\in\operatorname{Sub}_{\mathrm{ncc}}(G)^c,\\
0,&(H)\notin\operatorname{Sub}_{\mathrm{ncc}}(G)^c.
\end{cases}
\]

For each $K\leq G$ and $g\in N_G(K)$, put
\[
\lambda_{x,K}(gK)
:=
\widetilde x_{(\langle K,g\rangle)}+\widetilde x_{(K)}.
\]
This is independent of the choice of coset representative, since $\langle K,gk\rangle=\langle K,g\rangle$ for $k\in K$. Moreover, for $a\in N_G(K)$,
\[
\langle K,aga^{-1}\rangle=a\langle K,g\rangle a^{-1},
\]
so $\lambda_{x,K}$ is constant on conjugacy classes of $W_G(K)$. Finally,
\[
\lambda_{x,K}(K)
=\widetilde x_{(K)}+\widetilde x_{(K)}=0.
\]
Therefore
\[
\lambda_{x,K}\in\operatorname{Cl}_0(W_G(K),\F_2),
\]
and the map $x\mapsto\lambda_{x,K}$ is $\F_2$-linear.

Choose one representative $K$ of each conjugacy class $(K)\in\operatorname{Sub}(G)^c$, and put
\[
\mathfrak D_G
:=
\bigoplus_{(K)\in\operatorname{Sub}(G)^c}\mathfrak D(W_G(K)).
\]

\begin{dfn}[Defect map for Yoshida's criterion]
\label{dfn:yoshida-defect-map}
Define
\[
\operatorname{Def}_G:V_G\longrightarrow\mathfrak D_G,
\qquad
x\longmapsto
\bigl([\lambda_{x,K}]\bigr)_{(K)\in\operatorname{Sub}(G)^c}.
\]
\end{dfn}

\begin{thm}[Kernel of the defect map]\label{thm:yoshida-defect-kernel}
\[
\ker\operatorname{Def}_G=\mathcal S_2(G).
\]
Consequently,
\[
0\longrightarrow\mathcal S_2(G)
\longrightarrow V_G
\xrightarrow{\ \operatorname{Def}_G\ }
\operatorname{im}(\operatorname{Def}_G)
\longrightarrow0
\]
is exact, and
\[
\dim_{\F_2}\mathcal S_2(G)
=|\operatorname{Sub}_{\mathrm{ncc}}(G)^c|
-\dim_{\F_2}\operatorname{im}(\operatorname{Def}_G).
\]
\end{thm}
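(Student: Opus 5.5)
The plan is to translate Yoshida's criterion into the additive language of $\F_2$ and to combine it with Theorem~\ref{thm:core-trivial-idempotents}. Given $x\in V_G$, define a sign vector $\varepsilon^x\in\{\pm1\}^{\operatorname{Sub}(G)^c}$ by $\varepsilon^x_{(H)}:=(-1)^{\widetilde x_{(H)}}$. This is exactly the mark vector of the rational element $1_{\B G}-2e_S$, where $S$ is the subset with $x=x^S$ (Lemma~\ref{lem:idempotent-symmetric-difference}).

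By the bijection recalled before Theorem~\ref{thm:core-trivial-idempotents}, together with that theorem, $x\in\mathcal S_2(G)$ holds iff $1_{\B G}-2e_S\in\B G$. Since its marks are $\pm1$, this is equivalent to $1_{\B G}-2e_S$ being a unit of $\B G$. By Yoshida's criterion~\cite[Proposition~6.5]{Yoshida1990}, in its full form as a characterization, a $\{\pm1\}$-valued mark vector $(\varepsilon_{(H)})$ is the mark vector of a unit of $\B G$ iff for every $K\leq G$ the map
\[
W_G(K)\to\{\pm1\},\qquad gK\mapsto\varepsilon_{(\langle K,g\rangle)}\varepsilon_{(K)}
\]
is a group homomorphism. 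I will state explicitly that I use this necessity-and-sufficiency form, since the paper so far only quoted the necessity direction.

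Next, identify $\{\pm1\}$ with $\F_2$ via $(-1)^a\leftrightarrow a$. Under this identification the map $gK\mapsto\varepsilon^x_{(\langle K,g\rangle)}\varepsilon^x_{(K)}$ becomes precisely $\lambda_{x,K}$. So Yoshida's condition at $K$ says $\lambda_{x,K}\in\operatorname{Hom}(W_G(K),\F_2)$. We already know $\lambda_{x,K}\in\operatorname{Cl}_0(W_G(K),\F_2)$, so this condition is equivalent to $[\lambda_{x,K}]=0$ in $\mathfrak D(W_G(K))$. The condition depends only on the conjugacy class of $K$, because conjugation by $a\in G$ transports $\lambda_{x,K}$ to $\lambda_{x,{}^aK}$ through the isomorphism $W_G(K)\cong W_G({}^aK)$. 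Hence checking one representative per class suffices. Putting these together: $x\in\ker\operatorname{Def}_G$ iff all $[\lambda_{x,K}]$ vanish, iff Yoshida's condition holds for $\varepsilon^x$, iff $1-2e_S\in\B G^\times$, iff $x\in\mathcal S_2(G)$.

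The exact sequence follows because $\operatorname{Def}_G$ is $\F_2$-linear; each $x\mapsto\lambda_{x,K}$ was already shown to be linear. The dimension formula is then rank–nullity with $\dim V_G=|\operatorname{Sub}_{\mathrm{ncc}}(G)^c|$.

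The main obstacle is the sufficiency direction of Yoshida's criterion: that the homomorphism conditions force $1-2e_S$ to be integral, and not merely to have $\pm1$ marks in $\mathbb Q\B G$. I intend to cite it from~\cite{Yoshida1990}, where it is proved as an equivalence via the congruences characterizing the image of the mark homomorphism. If needed, I would sketch it using the standard congruences $\sum_{gK\in W_G(K)}\varphi_{\langle K,g\rangle}(y)\equiv0\pmod{|W_G(K)|}$. Applied to $y=(1-\varepsilon)/2$, these reduce to the vanishing of the character sum $\sum_g\varepsilon_{\langle K,g\rangle}\varepsilon_K$ modulo $2|W_G(K)|$, which holds for homomorphisms.
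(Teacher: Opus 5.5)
Your argument is the paper's proof. You rewrite the marks $(-1)^{\widetilde x_{(H)}}$ so that Yoshida's ratio becomes $\lambda_{x,K}$, use Yoshida's criterion as an equivalence, and finish with Theorem~\ref{thm:core-trivial-idempotents} and rank--nullity. Two points are handled more carefully than in the paper: you note explicitly that the equivalence form of the criterion is needed, and you check that the condition depends only on the class $(K)$.

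Your fallback sketch of the sufficiency direction is wrong, though this does not affect the main argument, which, like the paper's, rests on citing~\cite{Yoshida1990}. Applying the congruences to $y=(1-\varepsilon)/2$ tests whether $e_S$ itself lies in $\B G$, not whether $2e_S$ does. The condition it produces is $\sum_{w\in W_G(K)}\chi(w)\equiv|W_G(K)|\pmod{2|W_G(K)|}$, not a vanishing, and it fails for every nontrivial homomorphism $\chi$. For example, take $G=C_2$ and $u=1-[C_2/1]$: then $(1-u)/2=\tfrac12[C_2/1]\notin\B{C_2}$, even though $u$ is a unit.

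The congruences should instead be applied to $\varepsilon$ itself. If each $\chi_K$ is a homomorphism, then $\sum_{gK\in W_G(K)}\varepsilon_{(\langle K,g\rangle)}=\varepsilon_{(K)}\sum_{w}\chi_K(w)\in\{0,\pm|W_G(K)|\}$. This is divisible by $|W_G(K)|$, so $\varepsilon$ lies in the image of the mark homomorphism.
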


\begin{proof}
The ratio of signs appearing in Yoshida's unit criterion is
\[
\frac{(-1)^{\widetilde x_{(\langle K,g\rangle)}}}{(-1)^{\widetilde x_{(K)}}}
=(-1)^{\lambda_{x,K}(gK)}.
\]
Thus, by Yoshida's criterion, the function
\[
H\longmapsto(-1)^{\widetilde x_{(H)}}
\]
is the mark vector of a Burnside unit if and only if $[\lambda_{x,K}]=0$ for every $K\leq G$. Since $\widetilde x_{(H)}=0$ for every core-cyclic subgroup $H\leq G$, Theorem~\ref{thm:core-trivial-idempotents} shows that this is equivalent to $x\in\mathcal S_2(G)$. The dimension formula follows from the rank-nullity theorem.
\end{proof}

\begin{cor}[A dimension lower bound from local Weyl groups]\label{cor:local-weyl-lower-bound-core-trivial}
\[
\dim_{\F_2}\mathcal S_2(G)
\geq
\max\left\{
0,
\begin{aligned}
&|\operatorname{Sub}_{\mathrm{ncc}}(G)^c|\\
&\quad-
\sum_{(K)\in\operatorname{Sub}(G)^c}
\left(
\begin{aligned}
&k(W_G(K))-1\\
&\quad-\dim_{\F_2}\operatorname{Hom}(W_G(K),\F_2)
\end{aligned}
\right)
\end{aligned}
\right\}.
\]
\end{cor}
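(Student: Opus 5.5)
The bound follows from the rank--nullity identity in Theorem~\ref{thm:yoshida-defect-kernel}, combined with an upper bound on the dimension of the target $\mathfrak D_G$. First, Theorem~\ref{thm:yoshida-defect-kernel} gives
\[
\dim_{\F_2}\mathcal S_2(G)
=|\operatorname{Sub}_{\mathrm{ncc}}(G)^c|-\dim_{\F_2}\operatorname{im}(\operatorname{Def}_G).
\]
Since $\operatorname{im}(\operatorname{Def}_G)$ is a subspace of $\mathfrak D_G$, we have
\[
\dim_{\F_2}\operatorname{im}(\operatorname{Def}_G)\leq\dim_{\F_2}\mathfrak D_G.
\]

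Next, $\mathfrak D_G$ is by definition the direct sum of the local spaces $\mathfrak D(W_G(K))$, one for each chosen representative of $(K)\in\operatorname{Sub}(G)^c$. Its dimension is therefore the sum of the local dimensions, and Proposition~\ref{prop:local-yoshida-defect-dimension} computes each of these as
\[
k(W_G(K))-1-\dim_{\F_2}\operatorname{Hom}(W_G(K),\F_2).
\]
The choice of representative does not matter: conjugate subgroups have isomorphic Weyl groups, so each summand depends only on the class $(K)$. Substituting this into the rank--nullity identity gives
\[
\dim_{\F_2}\mathcal S_2(G)\geq|\operatorname{Sub}_{\mathrm{ncc}}(G)^c|-\sum_{(K)}\bigl(k(W_G(K))-1-\dim_{\F_2}\operatorname{Hom}(W_G(K),\F_2)\bigr).
\]

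Finally, a dimension is nonnegative, so we may take the maximum with $0$, which is exactly the displayed bound. There is no real obstacle here. The only point needing care is that the sum runs over all conjugacy classes of subgroups, not just the non-core-cyclic ones, which matches the indexing of $\mathfrak D_G$ in Definition~\ref{dfn:yoshida-defect-map}.
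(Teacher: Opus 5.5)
Your proposal is correct and follows essentially the same route as the paper: the rank--nullity identity from Theorem~\ref{thm:yoshida-defect-kernel}, the bound $\dim_{\F_2}\operatorname{im}(\operatorname{Def}_G)\leq\dim_{\F_2}\mathfrak D_G$, and Proposition~\ref{prop:local-yoshida-defect-dimension} to compute $\dim_{\F_2}\mathfrak D_G$ summand by summand. Your remarks that the choice of representatives does not matter and that nonnegativity of dimension justifies the $\max$ with $0$ spell out details the paper leaves implicit.
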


\begin{proof}
By Theorem~\ref{thm:yoshida-defect-kernel},
\[
\dim_{\F_2}\operatorname{im}(\operatorname{Def}_G)
\leq\dim_{\F_2}\mathfrak D_G,
\]
where
\[
\dim_{\F_2}\mathfrak D_G
=\sum_{(K)\in\operatorname{Sub}(G)^c}
\left(
\begin{aligned}
&k(W_G(K))-1\\
&\quad-\dim_{\F_2}\operatorname{Hom}(W_G(K),\F_2)
\end{aligned}
\right),
\]
and the result follows from Proposition~\ref{prop:local-yoshida-defect-dimension}.
\end{proof}

\subsection{Splitting of the defect map along normal quotients}

Let $N\trianglelefteq G$, and write the quotient map as
\[
\pi:G\longrightarrow G/N.
\]
For $H\leq G$, write $\pi(H)=HN/N$, and for $L\leq G/N$, put
\[
\widehat L:=\pi^{-1}(L).
\]

\begin{lem}[Core-cyclicity and normal quotients]
\label{lem:core-cyclic-normal-quotient}
The following statements hold.
\begin{enumerate}
 \item If $H\leq G$ is core-cyclic, then $\pi(H)\leq G/N$ is core-cyclic.
 \item For $L\leq G/N$, the subgroup $L$ is core-cyclic in $G/N$ if and only if $\widehat L$ is core-cyclic in $G$.
\end{enumerate}
\end{lem}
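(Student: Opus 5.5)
The plan is to reduce both statements to how normal cores behave under the quotient map $\pi$, and then use the elementary fact that a quotient of a cyclic group is cyclic.

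\emph{Step 1 (a core inclusion).} For $H\leq G$ we will show
\[
\pi(\operatorname{core}_G(H))\leq\operatorname{core}_{G/N}(\pi(H)).
\]
Indeed, $\pi(\operatorname{core}_G(H))$ is normal in $G/N$, since it is the image of a normal subgroup under a surjection. It is also contained in $\pi(H)$. Hence it lies in the largest normal subgroup of $G/N$ contained in $\pi(H)$, which is $\operatorname{core}_{G/N}(\pi(H))$.

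\emph{Step 2 (part 1).} Set $C=\operatorname{core}_G(H)$. The map $H\to\pi(H)\to\pi(H)/\pi(C)$ is a surjection with $C$ in its kernel. So $\pi(H)/\pi(C)$ is a quotient of the cyclic group $H/C$, hence cyclic. By Step 1, $\pi(H)/\operatorname{core}_{G/N}(\pi(H))$ is a quotient of $\pi(H)/\pi(C)$, and is therefore cyclic as well. This proves that $\pi(H)$ is core-cyclic.

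\emph{Step 3 (part 2).} For the equivalence, we use the identity
\[
\operatorname{core}_G(\widehat L)=\widehat{\operatorname{core}_{G/N}(L)}.
\]
The proof in Proposition~\ref{prop:inflation-core} already noted that $\operatorname{core}_{G/N}(H/N)=\operatorname{core}_G(H)/N$ whenever $N\leq H$. Alternatively, this follows directly from the correspondence theorem: it gives an inclusion-preserving bijection between subgroups of $G$ containing $N$ and subgroups of $G/N$, it commutes with conjugation, and it commutes with intersection. Applying the third isomorphism theorem then gives
\[
\widehat L/\operatorname{core}_G(\widehat L)\cong L/\operatorname{core}_{G/N}(L),
\]
so one side is cyclic exactly when the other is. (The direction ``$\widehat L$ core-cyclic $\Rightarrow$ $L$ core-cyclic'' also follows from part 1, because $\pi(\widehat L)=L$.)

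\emph{Expected obstacle.} No step is genuinely hard. The one place needing care is Step 1: in general the inclusion there is only an inclusion and not an equality, because $HN$ may have a larger core than $CN$. The argument must therefore pass through a quotient map rather than an isomorphism, which is why part 1 is stated as a one-way implication.
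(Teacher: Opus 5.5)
Your proof is correct and is essentially the paper's argument. For part 1, both produce a surjection $H/\operatorname{core}_G(H)\twoheadrightarrow\pi(H)/\operatorname{core}_{G/N}(\pi(H))$: the paper states your inclusion $\pi(\operatorname{core}_G(H))\leq\operatorname{core}_{G/N}(\pi(H))$ in $G$ as $\operatorname{core}_G(H)N\leq\operatorname{core}_G(HN)$ and then identifies $HN/\operatorname{core}_G(HN)$ with $\pi(H)/\operatorname{core}_{G/N}(\pi(H))$, while part 2 is the same core-correspondence identity followed by the third isomorphism theorem.
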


\begin{proof}
We have $N\leq\operatorname{core}_G(HN)$ and $\operatorname{core}_G(H)\leq\operatorname{core}_G(HN)$. Hence there is a natural surjection
\[
H/\operatorname{core}_G(H)
\twoheadrightarrow
HN/\operatorname{core}_G(HN).
\]
Moreover,
\[
\operatorname{core}_{G/N}(HN/N)
=\operatorname{core}_G(HN)/N,
\]
so the group on the right is isomorphic to $\pi(H)/\operatorname{core}_{G/N}(\pi(H))$. This proves the first assertion.

For the second assertion, since $\widehat L\geq N$,
\[
\operatorname{core}_{G/N}(L)
=\operatorname{core}_G(\widehat L)/N,
\]
and therefore
\[
\widehat L/\operatorname{core}_G(\widehat L)
\cong
L/\operatorname{core}_{G/N}(L).
\]
\end{proof}

By Lemma~\ref{lem:core-cyclic-normal-quotient}, define a linear map
\[
\operatorname{Inf}:V_{G/N}\longrightarrow V_G
\]
by
\[
\bigl(\operatorname{Inf}(x)\bigr)_{(H)}
:=
\begin{cases}
x_{(\pi(H))},
&\pi(H)\in\operatorname{Sub}_{\mathrm{ncc}}(G/N),\\
0,
&\pi(H)\notin\operatorname{Sub}_{\mathrm{ncc}}(G/N),
\end{cases}
\]
for each $(H)\in\operatorname{Sub}_{\mathrm{ncc}}(G)^c$. Define
\[
\operatorname{Fix}:V_G\longrightarrow V_{G/N}
\]
by
\[
\bigl(\operatorname{Fix}(y)\bigr)_{(L)}
:=y_{(\widehat L)}
\]
for each $(L)\in\operatorname{Sub}_{\mathrm{ncc}}(G/N)^c$. By construction,
\[
\operatorname{Fix}\circ\operatorname{Inf}
=\operatorname{id}_{V_{G/N}}.
\]

For $H\leq G$, the quotient map induces a group homomorphism
\[
\overline\pi_H:W_G(H)\longrightarrow W_{G/N}(\pi(H)),
\qquad
gH\longmapsto\pi(g)\pi(H).
\]
Pullback induces
\[
\overline\pi_H^*:
\mathfrak D(W_{G/N}(\pi(H)))
\longrightarrow
\mathfrak D(W_G(H)).
\]

Choose representatives of subgroup conjugacy classes so that, for each representative $L$ in $G/N$, its inverse image $\widehat L$ is among the chosen representatives in $G$. Using the identifications induced by conjugation isomorphisms of Weyl groups, define linear maps on defect spaces, denoted by the same symbols,
\[
\operatorname{Inf}:\mathfrak D_{G/N}\longrightarrow\mathfrak D_G,
\qquad
\bigl(\operatorname{Inf}(\omega)\bigr)_H
:=\overline\pi_H^*(\omega_{\pi(H)}).
\]

For $L\leq G/N$,
\[
N_G(\widehat L)=\pi^{-1}(N_{G/N}(L)),
\]
so
\[
\overline\pi_{\widehat L}:
W_G(\widehat L)\xrightarrow{\sim}W_{G/N}(L)
\]
is an isomorphism. Define
\[
\operatorname{Fix}:\mathfrak D_G\longrightarrow\mathfrak D_{G/N},
\qquad
\bigl(\operatorname{Fix}(\eta)\bigr)_L
:=(\overline\pi_{\widehat L}^*)^{-1}(\eta_{\widehat L}).
\]
Then
\[
\operatorname{Fix}\circ\operatorname{Inf}
=\operatorname{id}_{\mathfrak D_{G/N}}.
\]

\begin{thm}[Split naturality along normal quotients]
\label{thm:yoshida-defect-normal-quotient-splitting}
The following diagram commutes:
\[
\begin{tikzcd}[column sep=large,row sep=large]
V_{G/N}
\arrow[r,"\operatorname{Def}_{G/N}"]
\arrow[d,shift left=.7ex,"\operatorname{Inf}"]
&
\mathfrak D_{G/N}
\arrow[d,shift left=.7ex,"\operatorname{Inf}"]
\\
V_G
\arrow[r,"\operatorname{Def}_G"]
\arrow[u,shift left=.7ex,"\operatorname{Fix}"]
&
\mathfrak D_G
\arrow[u,shift left=.7ex,"\operatorname{Fix}"]
\end{tikzcd}
\]
\end{thm}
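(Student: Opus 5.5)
The statement asserts that the two squares in the diagram commute: $\operatorname{Def}_G\circ\operatorname{Inf}=\operatorname{Inf}\circ\operatorname{Def}_{G/N}$ and $\operatorname{Def}_{G/N}\circ\operatorname{Fix}=\operatorname{Fix}\circ\operatorname{Def}_G$. I would check both at the level of the class functions $\lambda_{x,K}$, before passing to the quotients $\mathfrak D(W)$. The basic observation is that pullback along a group homomorphism sends $\operatorname{Cl}_0$ into $\operatorname{Cl}_0$ and $\operatorname{Hom}$ into $\operatorname{Hom}$. So an identity of class functions descends to the defect spaces, and it is enough to prove $\lambda_{\operatorname{Inf}(x),H}=\overline\pi_H^*\lambda_{x,\pi(H)}$ and $\lambda_{y,\widehat L}=\overline\pi_{\widehat L}^*\lambda_{\operatorname{Fix}(y),L}$.

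\textbf{Inflation square.} First I would extend the definition of $\operatorname{Inf}$ to the padded vectors, showing that $\widetilde{\operatorname{Inf}(x)}_{(H)}=\widetilde x_{(\pi(H))}$ for every $H\leq G$, not only for non-core-cyclic $H$.
\begin{itemize}
 \item If $\pi(H)$ is core-cyclic, both sides vanish: the right side by definition of $\widetilde x$, the left side either by the definition of $\operatorname{Inf}$ or because $\widetilde{\ }$ pads with zero.
 \item If $\pi(H)$ is non-core-cyclic, then $H$ is non-core-cyclic by the contrapositive of Lemma~\ref{lem:core-cyclic-normal-quotient}(1), so the formula is just the definition of $\operatorname{Inf}$.
\end{itemize}
Then, for $g\in N_G(H)$, we have $\pi(\langle H,g\rangle)=\langle\pi(H),\pi(g)\rangle$. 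This gives
\[
\lambda_{\operatorname{Inf}(x),H}(gH)=\widetilde x_{(\langle\pi(H),\pi(g)\rangle)}+\widetilde x_{(\pi(H))}=\lambda_{x,\pi(H)}\bigl(\overline\pi_H(gH)\bigr),
\]
which is the required identity.

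\textbf{Fixed-point square.} By Lemma~\ref{lem:core-cyclic-normal-quotient}(2), $L$ is non-core-cyclic if and only if $\widehat L$ is. Hence $\widetilde{\operatorname{Fix}(y)}_{(L)}=\widetilde y_{(\widehat L)}$ for all $L\leq G/N$. Now let $g\in N_G(\widehat L)$ be a lift of an element of $N_{G/N}(L)$. Since $\langle\widehat L,g\rangle$ contains $N$, it equals $\pi^{-1}(\langle L,\pi(g)\rangle)$. Therefore
\[
\lambda_{y,\widehat L}=\overline\pi_{\widehat L}^*\,\lambda_{\operatorname{Fix}(y),L}.
\]
Because $\overline\pi_{\widehat L}$ is an isomorphism, we may apply $(\overline\pi_{\widehat L}^*)^{-1}$ to both sides, and this gives the second square.

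\textbf{Main obstacle.} The point that needs care is the choice of representatives. The component of $\operatorname{Def}_G$ at $(H)$ is computed at a fixed representative $K$, while $\pi(K)$ need not be the chosen representative of its class in $G/N$. I would handle this as follows.
\begin{itemize}
 \item For $a\in G$, conjugation $c_a$ induces an isomorphism $W_G(K)\to W_G({}^aK)$, and $\lambda_{x,{}^aK}\circ c_a=\lambda_{x,K}$, because $\widetilde x$ depends only on conjugacy classes.
 \item Conjugation is compatible with $\overline\pi$, in the sense that $\overline\pi_{{}^aK}\circ c_a=c_{\pi(a)}\circ\overline\pi_K$.
 \item The representatives were chosen so that the inverse images $\widehat L$ are themselves representatives, which removes the issue on the $\operatorname{Fix}$ side.
\end{itemize}
With these three facts, each identity can be transported to the chosen representatives, and both squares commute.
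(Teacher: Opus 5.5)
Your proposal is correct and follows the paper's proof: both reduce to the class-function identities $\lambda_{\operatorname{Inf}(x),H}=\overline\pi_H^*\lambda_{x,\pi(H)}$ and $\lambda_{y,\widehat L}=\overline\pi_{\widehat L}^*\lambda_{\operatorname{Fix}(y),L}$, using $\pi(\langle H,g\rangle)=\langle\pi(H),\pi(g)\rangle$ and $\pi^{-1}(\langle L,\pi(g)\rangle)=\langle\widehat L,g\rangle$, and then pass to defect classes. Your explicit checks of the zero-padding (via Lemma~\ref{lem:core-cyclic-normal-quotient}) and of the conjugation identifications of Weyl groups fill in steps that the paper leaves implicit.
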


\begin{proof}
Let $x\in V_{G/N}$, $H\leq G$, and $g\in N_G(H)$. Then
\[
\begin{aligned}
\lambda^G_{\operatorname{Inf}(x),H}(gH)
&=\widetilde x_{(\pi(\langle H,g\rangle))}
  +\widetilde x_{(\pi(H))}\\
&=\widetilde x_{(\langle\pi(H),\pi(g)\rangle)}
  +\widetilde x_{(\pi(H))}\\
&=\lambda^{G/N}_{x,\pi(H)}\bigl(\overline\pi_H(gH)\bigr).
\end{aligned}
\]
Hence, in each local component,
\[
[\lambda^G_{\operatorname{Inf}(x),H}]
=\overline\pi_H^*([\lambda^{G/N}_{x,\pi(H)}]),
\]
which proves commutativity in the downward direction.

Now let $y\in V_G$, $L\leq G/N$, and $g\in N_G(\widehat L)$. Since $N\leq\widehat L$,
\[
\pi^{-1}(\langle L,\pi(g)\rangle)
=\langle\widehat L,g\rangle.
\]
Therefore
\[
\lambda^{G/N}_{\operatorname{Fix}(y),L}(\pi(g)L)
=
\lambda^G_{y,\widehat L}(g\widehat L).
\]
Since $\overline\pi_{\widehat L}$ is an isomorphism, passing to defect classes gives commutativity in the upward direction.
\end{proof}

\begin{thm}[Decomposition along a normal quotient]
\label{thm:yoshida-defect-normal-quotient-decomposition}
With respect to the direct-sum decompositions
\[
V_G
=\operatorname{Inf}(V_{G/N})\oplus\ker\operatorname{Fix},
\]
\[
\mathfrak D_G
=\operatorname{Inf}(\mathfrak D_{G/N})
\oplus\ker\operatorname{Fix},
\]
one has
\[
\operatorname{Def}_G
\cong
\operatorname{Def}_{G/N}
\oplus
\left.\operatorname{Def}_G\right|_{\ker\operatorname{Fix}}.
\]
Consequently,
\[
\mathcal S_2(G)
=
\operatorname{Inf}\bigl(\mathcal S_2(G/N)\bigr)
\oplus
\bigl(\mathcal S_2(G)\cap\ker\operatorname{Fix}\bigr),
\]
and, in particular,
\[
\dim_{\F_2}\mathcal S_2(G)
=
\dim_{\F_2}\mathcal S_2(G/N)
+
\dim_{\F_2}\bigl(\mathcal S_2(G)\cap\ker\operatorname{Fix}\bigr).
\]
\end{thm}

\begin{proof}
Each of the two maps denoted by $\operatorname{Fix}$ is a left inverse to the corresponding $\operatorname{Inf}$, giving the displayed direct-sum decompositions. By Theorem~\ref{thm:yoshida-defect-normal-quotient-splitting}, the map $\operatorname{Def}_G$ sends the inflation component to the inflation component and $\ker\operatorname{Fix}$ to $\ker\operatorname{Fix}$. Thus the stated decomposition of the defect map follows. The remaining assertions follow by taking kernels componentwise.
\end{proof}

\begin{cor}[Splitting of core-trivial units along a normal quotient]
\label{cor:ucore-quotient-split}
Let $N\trianglelefteq G$. Then
\[
\mathcal U_{\mathrm{core}}(G)
\cong
\operatorname{Inf}_{G/N}^G
\bigl(\mathcal U_{\mathrm{core}}(G/N)\bigr)
\times
\bigl(
\mathcal U_{\mathrm{core}}(G)
\cap\ker((- )^N)
\bigr).
\]
\end{cor}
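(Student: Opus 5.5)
The plan is to transport the decomposition of Theorem~\ref{thm:yoshida-defect-normal-quotient-decomposition} from $\mathcal S_2(G)$ to $\mathcal U_{\mathrm{core}}(G)$. The transport uses the group isomorphism $S\mapsto 1_{\B G}-2e_S$ from $(\mathcal S_2(G),\triangle)$ onto $\mathcal U_{\mathrm{core}}(G)$ given by Theorem~\ref{thm:core-trivial-idempotents} and Lemma~\ref{lem:idempotent-symmetric-difference}. Along the way we must check that the linear maps $\operatorname{Inf}$ and $\operatorname{Fix}$ on $V$ correspond to the ring-level maps $\operatorname{Inf}_{G/N}^G$ and $(-)^N$ on units.

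\textbf{Step 1 (inflation).} Take $v=1_{\B{G/N}}-2e_T\in\mathcal U_{\mathrm{core}}(G/N)$. Inflation satisfies $\varphi_H^G(\operatorname{Inf}_{G/N}^G(y))=\varphi_{HN/N}^{G/N}(y)$, since $N$ acts trivially. Hence $\operatorname{Inf}_{G/N}^G(v)$ has mark $-1$ exactly at those $H$ with $(\pi(H))\in T$. By Lemma~\ref{lem:core-cyclic-normal-quotient}(1), such $H$ are non-core-cyclic, and this mark pattern is precisely $x^{T}$ pushed through the linear map $\operatorname{Inf}$. So $\operatorname{Inf}_{G/N}^G(v)=1_{\B G}-2e_{\operatorname{Inf}(T)}$, and inflation sends $\mathcal U_{\mathrm{core}}(G/N)$ into $\mathcal U_{\mathrm{core}}(G)$. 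This is also consistent with Proposition~\ref{prop:inflation-core}.

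\textbf{Step 2 (fixed points).} Take $u=1_{\B G}-2e_S\in\mathcal U_{\mathrm{core}}(G)$. For $L\le G/N$ we have $\varphi_L^{G/N}(u^N)=\varphi_{\widehat L}^G(u)$, because $(X^N)^L=X^{\widehat L}$. Hence $u^N=1_{\B{G/N}}-2e_{\operatorname{Fix}(S)}$. By Lemma~\ref{lem:core-cyclic-normal-quotient}(2), core-cyclic $L$ correspond to core-cyclic $\widehat L$, so $u^N$ is core-trivial, which also follows from Proposition~\ref{prop:quotient-compatibility}. Moreover, $u^N=1$ holds if and only if $\operatorname{Fix}(x^S)=0$. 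Therefore the isomorphism carries $\mathcal S_2(G)\cap\ker\operatorname{Fix}$ onto $\mathcal U_{\mathrm{core}}(G)\cap\ker((-)^N)$.

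\textbf{Step 3 (conclusion).} Under the isomorphism, Theorem~\ref{thm:yoshida-defect-normal-quotient-decomposition} becomes an internal direct product
\[
\mathcal U_{\mathrm{core}}(G)=\operatorname{Inf}_{G/N}^G\bigl(\mathcal U_{\mathrm{core}}(G/N)\bigr)\cdot\bigl(\mathcal U_{\mathrm{core}}(G)\cap\ker((-)^N)\bigr).
\]
Alternatively, the splitting can be seen directly. Write $u=\operatorname{Inf}(u^N)\cdot\bigl(\operatorname{Inf}(u^N)^{-1}u\bigr)$; the second factor has trivial $N$-fixed points because $(-)^N\circ\operatorname{Inf}=\mathrm{id}$. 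The intersection of the two factors is trivial for the same reason. The only point requiring care is checking the mark formulas for inflation and fixed points together with the core-cyclicity bookkeeping, which Lemma~\ref{lem:core-cyclic-normal-quotient} handles. I expect no real obstacle.
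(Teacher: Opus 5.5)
Your proposal is correct and follows the paper's proof, which simply transports the decomposition of Theorem~\ref{thm:yoshida-defect-normal-quotient-decomposition} through $S\mapsto 1_{\B G}-2e_S$. Your Steps 1--2 also supply the mark computations identifying the linear maps $\operatorname{Inf}$ and $\operatorname{Fix}$ on $V$ with $\operatorname{Inf}_{G/N}^G$ and $(-)^N$ on units, a point the paper leaves implicit. Your alternative argument is a genuinely more elementary route and is also valid: it needs only $(-)^N\circ\operatorname{Inf}_{G/N}^G=\operatorname{id}$ together with Propositions~\ref{prop:quotient-compatibility} and~\ref{prop:inflation-core}, and bypasses the defect map entirely.
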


\begin{proof}
Transport the decomposition of $\mathcal S_2$ in Theorem~\ref{thm:yoshida-defect-normal-quotient-decomposition} through the correspondence
\[
S\longmapsto1_{\B G}-2e_S
\]
of Theorem~\ref{thm:core-trivial-idempotents}.
\end{proof}

\subsection{Decomposition of the defect map along direct products}

Let $G_1,\ldots,G_m$ be finite groups, and put
\[
G:=\prod_{i=1}^{m}G_i,
\qquad
\pi_i:G\longrightarrow G_i,
\qquad
N_i:=\ker\pi_i=\prod_{j\neq i}G_j.
\]
Via the quotient isomorphism $G/N_i\cong G_i$, denote the maps defined for normal quotients by
\[
\operatorname{Inf}_i:V_{G_i}\longrightarrow V_G,
\qquad
\operatorname{Fix}_i:V_G\longrightarrow V_{G_i},
\]
and
\[
\operatorname{Inf}_i:\mathfrak D_{G_i}\longrightarrow\mathfrak D_G,
\qquad
\operatorname{Fix}_i:\mathfrak D_G\longrightarrow\mathfrak D_{G_i}.
\]

\begin{lem}[Vanishing of cross terms between distinct factors]
\label{lem:direct-product-cross-vanishing}
On both $V_G$ and the defect spaces, if $i\neq j$, then
\[
\operatorname{Fix}_i\circ\operatorname{Inf}_j=0.
\]
\end{lem}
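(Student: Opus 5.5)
The plan is to compute both composites directly from the definitions of $\operatorname{Inf}_j$ and $\operatorname{Fix}_i$ in the normal-quotient subsection, specialized to $N=N_j$ and $N=N_i$ respectively. Everything reduces to one observation. For $i\neq j$ and any $L\leq G_i\cong G/N_i$, the inverse image $\widehat L=\pi_i^{-1}(L)$ contains $N_i=\prod_{k\neq i}G_k$, and $N_i\geq G_j$. Hence
\[
\pi_j(\widehat L)=G_j .
\]
So when $\operatorname{Fix}_i$ reads a coordinate of an inflated element, it only sees the coordinate that $\operatorname{Inf}_j$ attaches to the whole group $G_j$. That coordinate is trivial for a structural reason in each of the two settings.

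On $V$: let $x\in V_{G_j}$ and $(L)\in\operatorname{Sub}_{\mathrm{ncc}}(G_i)^c$. By definition,
\[
\bigl(\operatorname{Fix}_i\operatorname{Inf}_j(x)\bigr)_{(L)}=\bigl(\operatorname{Inf}_j(x)\bigr)_{(\widehat L)} .
\]
Here $\widehat L$ is non-core-cyclic by Lemma~\ref{lem:core-cyclic-normal-quotient}(2), so this is a genuine coordinate of $V_G$. By the definition of $\operatorname{Inf}_j$, this coordinate equals $x_{(\pi_j(\widehat L))}=x_{(G_j)}$ if $G_j\in\operatorname{Sub}_{\mathrm{ncc}}(G_j)$, and $0$ otherwise. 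But $G_j$ is normal in itself with $G_j/\operatorname{core}_{G_j}(G_j)=1$ cyclic, so $G_j$ is core-cyclic. The second case therefore applies and the coordinate is $0$.

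On defect spaces: let $\omega\in\mathfrak D_{G_j}$ and $L\leq G_i$ be a chosen representative. By definition,
\[
\bigl(\operatorname{Fix}_i\operatorname{Inf}_j(\omega)\bigr)_L=(\overline\pi_{\widehat L}^*)^{-1}\bigl(\operatorname{Inf}_j(\omega)_{\widehat L}\bigr),
\qquad
\operatorname{Inf}_j(\omega)_{\widehat L}=\overline\pi_{\widehat L}^*\bigl(\omega_{\pi_j(\widehat L)}\bigr)=\overline\pi_{\widehat L}^*\bigl(\omega_{G_j}\bigr),
\]
where the pullback in the second formula is along the map $W_G(\widehat L)\to W_{G_j}(G_j)$. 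Since $W_{G_j}(G_j)=1$, Proposition~\ref{prop:local-yoshida-defect-dimension} gives $\mathfrak D(W_{G_j}(G_j))=0$. Hence $\omega_{G_j}=0$, its pullback is $0$, and the composite vanishes.

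The only point needing care, and the step I expect to be the main obstacle, is bookkeeping with representatives. The construction of $\operatorname{Inf}$ and $\operatorname{Fix}$ requires representatives chosen so that $\widehat L$ is a representative for each representative $L$ of $G/N$, and here this must hold simultaneously for every $N_i$. I will note that the classes $(\pi_i^{-1}(L))$ for different $i$ are distinct except for the class of $G$ itself, where they all coincide with the same representative $G$. Hence such a simultaneous choice is possible. Moreover, the identification isomorphisms between Weyl groups of conjugate subgroups are linear, so they preserve $0$. Since the vanishing is pointwise in each coordinate, this choice does not affect the conclusion.
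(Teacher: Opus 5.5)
Your proposal is correct and follows the paper's proof essentially verbatim. The key observation is $\pi_j(\pi_i^{-1}(L))=G_j$ for $i\neq j$; combined with core-cyclicity of $G_j$ it kills the $V$-coordinate, and combined with $\mathfrak D(W_{G_j}(G_j))=\mathfrak D(1)=0$ it kills the defect component. Your extra remarks are correct and fill in bookkeeping the paper leaves implicit: that $\widehat L$ is a genuine coordinate of $V_G$ by Lemma~\ref{lem:core-cyclic-normal-quotient}, and that compatible representatives can be chosen simultaneously for all $N_i$ because the classes $(\pi_i^{-1}(L))$ for distinct $i$ meet only in $(G)$.
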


\begin{proof}
Let $x\in V_{G_j}$ and let $L\leq G_i$ be non-core-cyclic. The inverse image of $L$ under $\pi_i$ is
\[
\widehat L_i=L\times\prod_{k\neq i}G_k.
\]
If $i\neq j$, then $\pi_j(\widehat L_i)=G_j$. Hence
\[
\bigl(\operatorname{Fix}_i\operatorname{Inf}_j(x)\bigr)_{(L)}
=\widetilde x_{(G_j)}=0,
\]
because $G_j$ is core-cyclic.

For the defect spaces, applying $\operatorname{Inf}_j$ to the $\widehat L_i$-component read by $\operatorname{Fix}_i$ gives a pullback from the $G_j$-component. But
\[
W_{G_j}(G_j)=1,
\qquad
\mathfrak D(1)=0,
\]
so this component is zero.
\end{proof}

\begin{thm}[Direct-sum decomposition along direct products]
\label{thm:yoshida-defect-direct-product-decomposition}
With respect to the direct-sum decompositions
\[
V_G
=\left(\bigoplus_{i=1}^{m}
  \operatorname{Inf}_i(V_{G_i})\right)
 \oplus\bigcap_{i=1}^{m}\ker\operatorname{Fix}_i,
\]
\[
\mathfrak D_G
=\left(\bigoplus_{i=1}^{m}
  \operatorname{Inf}_i(\mathfrak D_{G_i})\right)
 \oplus\bigcap_{i=1}^{m}\ker\operatorname{Fix}_i,
\]
one has
\[
\operatorname{Def}_G
\cong
\left(\bigoplus_{i=1}^{m}\operatorname{Def}_{G_i}\right)
\oplus
\left.\operatorname{Def}_G\right|_{\bigcap_{i=1}^{m}\ker\operatorname{Fix}_i}.
\]
Consequently,
\[
\mathcal S_2(G)
=\left(\bigoplus_{i=1}^{m}
  \operatorname{Inf}_i\bigl(\mathcal S_2(G_i)\bigr)\right)
\oplus
\left(\mathcal S_2(G)\cap\bigcap_{i=1}^{m}\ker\operatorname{Fix}_i\right),
\]
and, in particular,
\[
\dim_{\F_2}\mathcal S_2(G)
=\sum_{i=1}^{m}\dim_{\F_2}\mathcal S_2(G_i)
 +\dim_{\F_2}\left(\mathcal S_2(G)\cap\bigcap_{i=1}^{m}\ker\operatorname{Fix}_i\right).
\]
\end{thm}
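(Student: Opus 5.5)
The plan is to deduce the statement from the normal-quotient machinery of Theorem~\ref{thm:yoshida-defect-normal-quotient-splitting}, applied once for each $N_i$, together with the cross-term vanishing of Lemma~\ref{lem:direct-product-cross-vanishing}.

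\textbf{Step 1: the two decompositions.} On $V_G$ and on $\mathfrak D_G$ put
\[
P_i:=\operatorname{Inf}_i\circ\operatorname{Fix}_i .
\]
By $\operatorname{Fix}_i\circ\operatorname{Inf}_i=\operatorname{id}$ each $P_i$ is idempotent. By Lemma~\ref{lem:direct-product-cross-vanishing}, $P_iP_j=0$ for $i\neq j$. Hence
\[
P:=\sum_{i=1}^m P_i
\]
is an idempotent. Its image is $\sum_i\operatorname{Inf}_i(V_{G_i})$, and this sum is direct: apply $\operatorname{Fix}_j$ to a relation $\sum_i\operatorname{Inf}_i(x_i)=0$ and the cross terms vanish, leaving $x_j=0$. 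Since each $\operatorname{Inf}_i$ is injective, $\ker P=\bigcap_i\ker P_i=\bigcap_i\ker\operatorname{Fix}_i$. The identity $V=\operatorname{im}P\oplus\ker P$ then gives the displayed decomposition of $V_G$. The argument for $\mathfrak D_G$ is word for word the same.

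\textbf{Step 2: compatibility of $\operatorname{Def}_G$.} Theorem~\ref{thm:yoshida-defect-normal-quotient-splitting} with $N=N_i$, transported along $G/N_i\cong G_i$, gives two identities:
\[
\operatorname{Def}_G\circ\operatorname{Inf}_i=\operatorname{Inf}_i\circ\operatorname{Def}_{G_i},
\qquad
\operatorname{Fix}_i\circ\operatorname{Def}_G=\operatorname{Def}_{G_i}\circ\operatorname{Fix}_i .
\]
\begin{itemize}
\item The first identity shows that $\operatorname{Def}_G$ maps $\operatorname{Inf}_i(V_{G_i})$ into $\operatorname{Inf}_i(\mathfrak D_{G_i})$, acting there as $\operatorname{Def}_{G_i}$ under the injective identification $\operatorname{Inf}_i$.
\item The second identity shows that $\operatorname{Def}_G$ maps $\bigcap_i\ker\operatorname{Fix}_i$ into $\bigcap_i\ker\operatorname{Fix}_i$.
\end{itemize}
Together these give the block-diagonal form $\operatorname{Def}_G\cong\bigl(\bigoplus_i\operatorname{Def}_{G_i}\bigr)\oplus\operatorname{Def}_G|_{\bigcap\ker\operatorname{Fix}_i}$.

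\textbf{Step 3: kernels and dimensions.} The kernel of a block-diagonal map is the direct sum of the kernels of its blocks. Using $\ker\operatorname{Def}_G=\mathcal S_2(G)$ from Theorem~\ref{thm:yoshida-defect-kernel}, the kernel on the $i$-th block is $\operatorname{Inf}_i(\mathcal S_2(G_i))$. The kernel on the residual block is $\mathcal S_2(G)\cap\bigcap_i\ker\operatorname{Fix}_i$. Since each $\operatorname{Inf}_i$ is injective, the dimension formula follows.

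\textbf{Main obstacle.} The delicate point is the bookkeeping of conjugacy-class representatives. The splitting theorem needs representatives in $G$ chosen so that $\widehat L$ is a representative for each representative $L\leq G/N$, and this must hold simultaneously for all $i$. A single choice works: for each $i$, take the representatives of subgroups of the form $L\times\prod_{k\neq i}G_k$ to be exactly these products. For distinct $i$, such subgroups coincide only when $L=G_i$. That class is core-cyclic and has Weyl group $W=1$, so it contributes nothing to either $V$ or $\mathfrak D$, and no conflict arises. If one prefers not to rely on this, the conjugation isomorphisms of Weyl groups identify the defect spaces canonically, so the whole argument is independent of the choices.
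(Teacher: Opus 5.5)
Your proposal is correct and follows essentially the same route as the paper. You build the decompositions from the orthogonal idempotents $\operatorname{Inf}_i\operatorname{Fix}_i$ using the cross-term vanishing lemma, then use the split naturality along each $N_i$ to make $\operatorname{Def}_G$ block diagonal, and finally take kernels componentwise. Your extra remark on choosing conjugacy-class representatives compatibly for all $i$ (the only overlap being the class of $G$ itself) spells out a point the paper leaves implicit in its conjugation identifications of Weyl groups.
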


\begin{proof}
For each $i$,
\[
\operatorname{Inf}_i\operatorname{Fix}_i:V_G\longrightarrow V_G
\]
is idempotent, and Lemma~\ref{lem:direct-product-cross-vanishing} gives, for $i\neq j$,
\[
\operatorname{Inf}_i\operatorname{Fix}_i
\operatorname{Inf}_j\operatorname{Fix}_j=0.
\]
Thus for every $x\in V_G$,
\[
x+\sum_{i=1}^{m}\operatorname{Inf}_i\operatorname{Fix}_i(x)
\in
\bigcap_{i=1}^{m}\ker\operatorname{Fix}_i,
\]
which gives the first direct-sum decomposition. Directness follows by applying the maps $\operatorname{Fix}_i$. The same argument applies to $\mathfrak D_G$.

By naturality along normal quotients, $\operatorname{Def}_G$ sends each inflation component to the corresponding inflation component, and its restriction there agrees with $\operatorname{Def}_{G_i}$. It also sends the residual component
\[
\bigcap_{i=1}^{m}\ker\operatorname{Fix}_i
\]
in $V_G$ into the corresponding residual component of $\mathfrak D_G$. Hence $\operatorname{Def}_G$ decomposes as stated. The formula for its kernel follows componentwise.
\end{proof}

\begin{cor}[Factor and residual components of core-trivial units for direct products]
\label{cor:core-unit-direct-product-defect-decomposition}
\[
\mathcal U_{\mathrm{core}}(G)
\cong
\left(\prod_{i=1}^{m}\mathcal U_{\mathrm{core}}(G_i)\right)
\times
\bigcap_{i=1}^{m}
\ker\left(
(-)^{N_i}:\mathcal U_{\mathrm{core}}(G)
\longrightarrow\mathcal U_{\mathrm{core}}(G_i)
\right).
\]
\end{cor}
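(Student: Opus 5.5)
We transport the decomposition of $\mathcal S_2(G)$ in Theorem~\ref{thm:yoshida-defect-direct-product-decomposition} through the group isomorphism
\[
\Theta_G:\mathcal S_2(G)\longrightarrow\mathcal U_{\mathrm{core}}(G),\qquad S\longmapsto 1_{\B G}-2e_S,
\]
of Theorem~\ref{thm:core-trivial-idempotents}. That it is a homomorphism from symmetric difference to multiplication is Lemma~\ref{lem:idempotent-symmetric-difference}. This is the same pattern as in Corollary~\ref{cor:ucore-quotient-split}. The only real work is to check that the linear maps $\operatorname{Inf}_i$ and $\operatorname{Fix}_i$ on $V_G$ correspond, under $\Theta$, to inflation $\operatorname{Inf}_{G_i}^G$ and to the fixed-point map $(-)^{N_i}$ on units.

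\textbf{Step 1: fixed points.} For a unit $u=1-2e_S$ and $L\leq G_i=G/N_i$, take the mark at $L$ of $u^{N_i}$. It equals $\varphi^G_{\widehat L}(u)$, since $\#(X^{N_i})^{L}=\#X^{\widehat L}$. So the sign pattern of $u^{N_i}$ is $L\mapsto(-1)^{\widetilde x_{(\widehat L)}}$.

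By Lemma~\ref{lem:core-cyclic-normal-quotient}(2), core-cyclic $L$ have core-cyclic $\widehat L$, so these marks are $1$. Hence $u^{N_i}$ is core-trivial by Theorem~\ref{thm:core-trivial-subgroup}, and it corresponds exactly to $\operatorname{Fix}_i(x^S)$. In particular $(-)^{N_i}$ maps $\mathcal U_{\mathrm{core}}(G)$ into $\mathcal U_{\mathrm{core}}(G_i)$, so the kernels in the statement make sense. Moreover, $\ker\bigl((-)^{N_i}\bigr)\cap\mathcal U_{\mathrm{core}}(G)$ equals $\Theta_G\bigl(\mathcal S_2(G)\cap\ker\operatorname{Fix}_i\bigr)$.

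\textbf{Step 2: inflation.} For $v=1-2e_T\in\mathcal U_{\mathrm{core}}(G_i)$, the mark of $\operatorname{Inf}(v)$ at $H\leq G$ is $\varphi^{G_i}_{\pi_i(H)}(v)$. So its sign vector is $H\mapsto(-1)^{\widetilde x_{(\pi_i(H))}}$.

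By Lemma~\ref{lem:core-cyclic-normal-quotient}(1), a core-cyclic $H$ has core-cyclic $\pi_i(H)$, so this mark is $1$. On non-core-cyclic $H$ the vector is exactly $\operatorname{Inf}_i(x^T)$, including the case where $\pi_i(H)$ is core-cyclic, which gives entry $0$. Thus $\Theta_G\circ\operatorname{Inf}_i=\operatorname{Inf}_{G_i}^G\circ\Theta_{G_i}$.

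\textbf{Step 3: transport the decomposition.} Apply $\Theta_G$ to
\[
\mathcal S_2(G)=\bigoplus_i\operatorname{Inf}_i\bigl(\mathcal S_2(G_i)\bigr)\oplus\Bigl(\mathcal S_2(G)\cap\bigcap_i\ker\operatorname{Fix}_i\Bigr).
\]
Since $\Theta_G$ is an isomorphism of elementary abelian $2$-groups, this gives an internal direct product. The factors $\operatorname{Inf}_{G_i}^G\bigl(\mathcal U_{\mathrm{core}}(G_i)\bigr)\cong\mathcal U_{\mathrm{core}}(G_i)$ are isomorphic because inflation is injective, having the left inverse $(-)^{N_i}$. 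The remaining factor is the intersection of kernels, by Step~1.

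\textbf{Main obstacle.} The delicate point is the bookkeeping in Steps~1 and~2. We must verify that the mark identities match the coordinatewise definitions of $\operatorname{Inf}$ and $\operatorname{Fix}$ on $V$, including zero entries at classes that are core-cyclic in one group but not the other. Both lemmas on core-cyclicity are needed precisely for this matching.
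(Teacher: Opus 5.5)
Your proposal is correct and follows the paper's route: transport the kernel decomposition of Theorem~\ref{thm:yoshida-defect-direct-product-decomposition} through $S\mapsto 1_{\B G}-2e_S$, using Lemma~\ref{lem:idempotent-symmetric-difference} for the group structure. Your Steps 1 and 2 spell out what the paper only asserts, namely that inflation and fixed points correspond to $\operatorname{Inf}_i$ and $\operatorname{Fix}_i$; you do this via the mark identities $\varphi^{G_i}_L(u^{N_i})=\varphi^G_{\widehat L}(u)$ and $\varphi^G_H(\operatorname{Inf}_{G_i}^G v)=\varphi^{G_i}_{\pi_i(H)}(v)$ together with Lemma~\ref{lem:core-cyclic-normal-quotient}.
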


\begin{proof}
Under the correspondence $S\mapsto1_{\B G}-2e_S$ from Theorem~\ref{thm:core-trivial-idempotents} and the product formula of Lemma~\ref{lem:idempotent-symmetric-difference}, inflation and fixed points correspond to $\operatorname{Inf}_i$ and $\operatorname{Fix}_i$, respectively. The assertion follows from the kernel decomposition in Theorem~\ref{thm:yoshida-defect-direct-product-decomposition}.
\end{proof}

By Example~\ref{ex:s4-core-trivial-unit},
\[
\mathcal U_{\mathrm{core}}(S_4)\neq\{1\}.
\]
Therefore, for every $m\geq1$,
\[
\operatorname{rank}\bigl(\mathcal U_{\mathrm{core}}(S_4^m)\bigr)\geq m.
\]
Indeed, the $m$ units obtained by inflating the nontrivial core-trivial unit of Example~\ref{ex:s4-core-trivial-unit} from the individual factors are independent by Corollary~\ref{cor:core-unit-direct-product-defect-decomposition}. Hence the rank of $\mathcal U_{\mathrm{core}}(G)$ is unbounded among finite groups.

\end{document}